\newtheorem{corollary}{Corollary}
\newtheorem{theorem}{Theorem}
\newtheorem{conjecture}{Conjecture}
\newtheorem*{conjecture*}{Conjecture}
\newtheorem{lemma}{Lemma}
\theoremstyle{remark}
\newtheorem*{remark}{Remark}
\numberwithin{equation}{section}
\renewcommand{\Re}{\mathfrak{Re}}
\renewcommand{\Im}{\mathfrak{Im}}
\newcommand{\NN}{\mathbb{N}}
\newcommand{\E}{\mathbb{E}}
\begin{document}

\title[Integer moments of the derivatives of zeta]{Integer moments of the derivatives of the Riemann zeta function}

\author{Christopher Hughes}
\address{Department of Mathematics, University of York, York, YO10 5GH, United Kingdom}
\email{christopher.hughes@york.ac.uk}
\author{Andrew Pearce-Crump}
\address{School of Mathematics, Fry Building, Woodland Road, Bristol, BS8 1UG, United Kingdom}
\email{andrew.pearce-crump@bristol.ac.uk}

\begin{abstract}
    We conjecture the full asymptotic expansion of a product of Riemann zeta functions, evaluated at the non-trivial zeros of the zeta function, with shifts added in each argument. By taking derivatives with respect to these shifts, we form a conjecture for the integer moments of mixed derivatives of the zeta function. This generalises a result of the authors where they took complex moments of the first derivative of the zeta function, evaluated at the non-trivial zeros. We approach this problem in two different ways: the first uses a random matrix theory approach, and the second by the Ratios Conjecture of Conrey, Farmer, and Zirnbauer.
\end{abstract}

\maketitle

\section{Introduction}\label{sect:Intro}
A difficult problem in analytic number theory is to calculate the full asymptotics of the moments of the Riemann zeta function $\zeta (s)$. The type of moments considered in this paper are given as sums of products of the derivatives of the zeta function, evaluated at the non-trivial zeros of the zeta function, which we refer to as `moments of mixed derivatives'. We formulate the following conjecture for a shifted version of these moments by following approach taken in forming the Ratios Conjecture of Conrey, Farmer, and Zirnbauer \cite{CFZ05}.

\begin{conjecture}\label{RatioMom}
Assume the Riemann Hypothesis. Additionally assume that small shifts $\alpha_j$ satisfy $|\Re(\alpha_j)| < \frac{1}{4}$ and $|\Im(\alpha_j)| \ll_\varepsilon T^{1-\varepsilon}$ for every $\varepsilon > 0$, and that $|\delta|<1/4$.

The discrete moments of shifts of $\zeta (s)$, evaluated at the non-trivial zeros $\rho = \frac{1}{2} + i \gamma$ of $\zeta (s)$, are given by
\begin{multline}\label{eq:ZRatios}
\sum_{0 < \gamma \leq T}  \zeta \left( \frac{1}{2} + i \gamma + \alpha_1 \right)\dots\zeta \left( \frac{1}{2} + i \gamma + \alpha_k \right) = \\
 \left. \frac{d}{d \delta} \frac{1}{2 \pi} \int_{1}^{T} Z_{\alpha_1,\dots,\alpha_k,\delta} + \left( \frac{t}{2 \pi} \right)^{-\alpha_1 - \delta} Z_{-\delta, \alpha_2,\dots,\alpha_k,-\alpha_1} +\dots + \left( \frac{t}{2 \pi} \right)^{-\alpha_k - \delta} Z_{ \alpha_1,\dots,-\delta, -\alpha_k} dt  \right|_{\delta=0} \\
+ \frac{T}{2\pi} \log \frac{T}{2\pi} + O\left(T^{1/2 +\varepsilon}\right),
\end{multline}
where $Z_{\alpha_1,\dots,\alpha_k,\delta}$ is defined as the following quotient of zeta functions multiplied by an arithmetic factor
\[
 Z_{\alpha_1,\dots,\alpha_k,\delta} = \frac{  \zeta (1+\alpha_1 + \delta)  \dots \zeta (1+\alpha_k +\delta)}{\zeta (1+\alpha_1)  \dots \zeta (1+\alpha_k)} A_{\{\alpha_1,\dots,\alpha_k\}}(\delta).
\]
and where the arithmetic factor is given by
\begin{equation}\label{eq:A}
    A_{\{\alpha_1,\dots,\alpha_k\}}(\delta) := \\
    \prod_p \frac{ 1 + F_1(p) + F_2(p) + \dots + F_k(p)}{(1-p^{-(1+\alpha_1)})\dots(1-p^{-(1+\alpha_k)})}
\end{equation}
where for $1\leq m\leq k$
\[
F_m(p) = (-1)^m \sum_{\substack{J \subset\{1,\dots,k\} \\ |J|=m}} p^{-\left(m +(m-1)\delta + \sum_{j \in J} \alpha_j \right)}.
\]
\end{conjecture}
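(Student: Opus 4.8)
Since the displayed formula is a conjecture, what I would produce is a heuristic derivation in the style of the Conrey--Farmer--Zirnbauer \emph{ratios recipe}, carried out alongside a random matrix computation as an independent check, exactly as the abstract promises. The starting point is the identity $\frac{\zeta'}{\zeta}(s)=\left.\frac{d}{d\delta}\frac{\zeta(s+\delta)}{\zeta(s)}\right|_{\delta=0}$, which together with the argument principle lets me write
\[
\sum_{0<\gamma\le T}\prod_{j=1}^{k}\zeta(\rho+\alpha_j)
=\left.\frac{d}{d\delta}\right|_{\delta=0}\frac{1}{2\pi i}\oint_{\mathcal R}\frac{\zeta(s+\delta)}{\zeta(s)}\prod_{j=1}^{k}\zeta(s+\alpha_j)\,ds+(\text{residue terms}),
\]
where $\mathcal R$ is a positively oriented rectangle, with vertical sides in $\Re s>1$ and $\Re s<0$ and running from height $1$ to height $T$, enclosing precisely the ordinates $0<\gamma\le T$. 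The residue of $1/\zeta(s)$ at a simple zero $\rho$ is $1/\zeta'(\rho)$, and differentiating the resulting residue $\zeta(\rho+\delta)\zeta'(\rho)^{-1}\prod_j\zeta(\rho+\alpha_j)$ at $\delta=0$ returns $\prod_j\zeta(\rho+\alpha_j)$ precisely because $\zeta(\rho)=0$; the residue terms, coming from the poles at $s=1-\alpha_j$ of the factors $\zeta(s+\alpha_j)$, are explicit and, under the Riemann Hypothesis, of size $O_\varepsilon(T^\varepsilon)$. The integrand is now a ratio of products of zeta functions, which is exactly the shape the recipe is built to handle.

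On the left side of $\mathcal R$ I apply the functional equation $\zeta(w)=\chi(w)\zeta(1-w)$, and then run the recipe: replace $1/\zeta(s)$ by the Dirichlet series $\sum_n\mu(n)n^{-s}$, replace each of $\zeta(s+\delta)$ and $\zeta(s+\alpha_j)$ by the two terms of its approximate functional equation, multiply out, average over $t\in[1,T]$, retain only the diagonal terms, extend the surviving arithmetic sums to infinity, and factor the answer over primes. By the standard bookkeeping this produces the $1+k$ integrand terms in the conjecture: the ``main'' term $Z_{\alpha_1,\dots,\alpha_k,\delta}$, whose ratio $\prod_j\zeta(1+\alpha_j+\delta)/\prod_j\zeta(1+\alpha_j)$ is supplied by the polar parts of the relevant zeta factors at $1$ and whose arithmetic factor $A_{\{\alpha_1,\dots,\alpha_k\}}(\delta)$ comes from resumming the diagonal sums prime by prime --- the local numerator $1+F_1(p)+\dots+F_k(p)$ being exactly what one obtains on pairing the M\"obius coefficient against the partial products of the geometric series of the $\zeta(s+\alpha_j)$'s, which produces the sign $(-1)^m$, the exponent $m$ of $p$, the shift sum $\sum_{j\in J}\alpha_j$, and the extra $(m-1)\delta$ that records one of the $m$ chosen factors being absorbed against the $\delta$-shift --- together with $k$ ``swap'' terms, the $j$th exchanging $\alpha_j$ with $\delta$ (with sign changes) and carrying the factor $(t/2\pi)^{-\alpha_j-\delta}$ from the accompanying $\chi$-factors. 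Finally the free-standing main term $\frac{T}{2\pi}\log\frac{T}{2\pi}$ is what the $\delta$-derivative extracts from the $\frac{\chi'}{\chi}(s)\approx-\log(t/2\pi)$ part of $\frac{\zeta'}{\zeta}(s)=\frac{\chi'}{\chi}(s)-\frac{\zeta'}{\zeta}(1-s)$ on the reflected side; it is consistent with $\prod_j\zeta(\tfrac12+i\gamma+\alpha_j)$ having mean value $1$ over the $\sim\frac{T}{2\pi}\log\frac{T}{2\pi}$ ordinates up to height $T$.

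As an independent check I would recover the same structure from random matrix theory: model the left-hand side by $\E_{U(N)}$ of the corresponding contour integral of $\frac{\Lambda_A'}{\Lambda_A}(z)\prod_j\Lambda_A(ze^{-\alpha_j})$ around $|z|=1$, where $\Lambda_A$ is the characteristic polynomial of a Haar-random $A\in U(N)$ and $N\sim\log(T/2\pi)$, and evaluate it in closed form using the Conrey--Forrester--Snaith formulas for products, ratios and derivatives of characteristic polynomials; the functional equation of $\Lambda_A$ reproduces the same $1+k$ term swap structure, with a ratio of elementary factors standing in for $\prod_j\zeta(1+\alpha_j+\delta)/\prod_j\zeta(1+\alpha_j)$. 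Matching the two computations term by term, grafting the arithmetic Euler product $A_{\{\alpha_1,\dots,\alpha_k\}}(\delta)$ onto the random matrix answer, and applying $\left.\frac{d}{d\delta}\right|_{\delta=0}$ at the very end then gives the stated formula. I would verify it against the case $k=1$ --- where $F_1(p)=-p^{-(1+\alpha_1)}$, so $A\equiv1$ and the conjecture reduces to the known asymptotic for $\sum_{0<\gamma\le T}\zeta(\tfrac12+i\gamma+\alpha_1)$ --- and against the specialisation $\delta=0$, where $1+\sum_{m=1}^{k}F_m(p)=\prod_j(1-p^{-(1+\alpha_j)})$ forces $A=1$ and $Z_{\alpha_1,\dots,\alpha_k,0}=1$.

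The genuinely hard point is not the combinatorics of the swaps but the two approximations at the heart of the recipe: that discarding the off-diagonal terms and extending the diagonal sums to infinity really costs only $O_\varepsilon(T^{1/2+\varepsilon})$, and that the $\delta$-derivative may be interchanged with the contour manipulations and the $t$-average uniformly over the stated ranges $|\Re(\alpha_j)|<\tfrac14$, $|\Im(\alpha_j)|\ll_\varepsilon T^{1-\varepsilon}$, $|\delta|<\tfrac14$; a rigorous treatment of the off-diagonal is exactly what keeps this a conjecture rather than a theorem. Within the heuristic derivation itself, the most error-prone step is pinning down the exact shape of the arithmetic factor --- that the local numerator is $1+\sum_{m=1}^{k}F_m(p)$ with precisely those exponents $m+(m-1)\delta+\sum_{j\in J}\alpha_j$ and not some nearby variant --- which is why cross-checking the random matrix and ratios derivations against one another, and against the $k=1$ and $\delta=0$ cases, is indispensable.
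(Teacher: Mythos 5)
Your sketch follows essentially the same route as the paper's derivation in Section~\ref{ProofConj3}: express the sum via Cauchy's theorem, use the functional equation $\frac{\zeta'}{\zeta}(s)=\frac{\chi'}{\chi}(s)-\frac{\zeta'}{\zeta}(1-s)$ to extract the $\frac{T}{2\pi}\log\frac{T}{2\pi}$ term, convert $\frac{\zeta'}{\zeta}$ into a ratio via $\frac{d}{d\delta}\frac{\zeta(s+\delta)}{\zeta(s)}\big|_{\delta=0}$, and run the CFZ recipe keeping only the zero- and one-swap terms, which yields $Z_{\alpha_1,\dots,\alpha_k,\delta}$ with the stated arithmetic factor and the $k$ terms carrying $(t/2\pi)^{-\alpha_j-\delta}$. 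The only material deviation is your contour placement (vertical sides in $\Re s>1$ and $\Re s<0$, forcing you to account for residues at $s=1-\alpha_j$, and implicitly at $s=1-\delta$), whereas the paper takes $1/2<c<3/4$ so those poles lie outside and no such residue terms arise; at the level of this heuristic the difference is cosmetic.
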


\begin{remark}\hfill
\begin{enumerate}
    \item Note that if $\delta=0$ then
    \[
    1 + F_1(p) + F_2(p) + \dots + F_k(p) = (1-p^{-(1+\alpha_1)})\dots(1-p^{-(1+\alpha_k)})
    \]
    and so $A_{\{\alpha_1,\dots,\alpha_k\}}(0) =1$.

    \item To obtain the mixed derivative that we are interested in, we Taylor expand each $\zeta (\frac{1}{2} + i \gamma + a_j)$ about $a_j=0$, take derivatives with respect to $a_j$ and then set them equal to zero.

    \item In the nomenclature of the `recipe' of Conrey, Farmer, Keating, Rubinstein and Snaith \cite{CFKRS05} and the Ratios Conjecture of Conrey, Farmer and Zirnbauer \cite{CFZ05}, the terms that appear in the integral in \eqref{eq:ZRatios} are only the zero- and one-swap terms. The reason for this will become clear in Section \ref{ProofConj3}.

    \item The error term in Conjecture~\ref{RatioMom} can be contentious. While some authors prefer an error term of $O\left(T^{1 - \delta}\right)$ for $0 < \delta < 1/2$, in this conjecture and our later examples, we suggest an error term of $O\left(T^{1/2 +\varepsilon}\right)$, in keeping with the original statements of the recipe/Ratios Conjecture.

    \item It is not obvious from inspection that the main term of \eqref{eq:ZRatios} is holomorphic in terms of the shift parameters. However the symmetries of the expression imply that the poles cancel to form a holomorphic function. Lemma 6.7 of \cite{CFZ07} exhibits an integral representation for the permutation sum that proves the holomorphy.
\end{enumerate}
\end{remark}

We will carry the assumptions in Conjecture \ref{RatioMom} throughout this paper, namely that we assume the Riemann Hypothesis and that the small shifts $\alpha_j$ satisfy $|\Re(\alpha_j)| < \frac{1}{4}$ and $|\Im(\alpha_j)| \ll_\varepsilon T^{1-\varepsilon}$ for every $\varepsilon > 0$, and that  $|\delta|<1/4$.

We will now place the conjecture within the setting of discrete moments and generalisations of the problem known as Shanks' Conjecture. We show how our viewpoint leads to new conjectures for higher derivatives and moments of the zeta function, which were previously out of reach.

Shanks~\cite{Shanks61} conjectured that $\zeta ' (\rho)$, where $\rho=\beta+ i\gamma$ ranges over the non-trivial zeros of the Riemann zeta function, is real and positive in the mean. He formed this conjecture by studying Haselgrove's tables of numerical values of the Riemann zeta function~\cite{Has60}. In looking for a straightforward proof that there are infinitely many simple zeros of $\zeta (s)$, Conrey, Ghosh and Gonek~\cite{CGG88}
were the first to prove that this conjecture is true. In particular, they showed that
\[
\sum_{0 < \gamma \leq T} \zeta ' (\rho) = \frac{1}{2} \frac{T}{2 \pi} \left( \log \frac{T}{2 \pi} \right)^2 + O(T\log T).
\]
Fuji gave lower order terms for this result in~\cite{Fuj94,Fujii12} with a power-saving error term under the Riemann Hypothesis.

Later, Kaptan, Karabulut and Y{\i}ld{\i}r{\i}m~\cite{KKY11} extended this result to all derivatives of $\zeta (s)$, showing that
\begin{equation*}
	\sum_{0 < \gamma \leq T} \zeta^{(n)} (\rho) =
	\frac{(-1)^{n+1}}{n+1} \frac{T}{2 \pi} \left( \log \frac{T}{2 \pi} \right)^{n+1} + O \bigl(T (\log T)^n \bigr).
\end{equation*}
The authors of this paper then found all lower order terms with a power-saving error term under the Riemann Hypothesis~\cite{HugPC22}. In particular, we noted that $\zeta ^{(n)} (\rho)$, where $\rho$ ranges over non-trivial zeros of the Riemann zeta function, is real and positive or negative in the mean depending on whether $n$ is odd or even, respectively.

A complete history of Shanks' Conjecture, up to (but excluding) the results of this paper and those in \cite{HugPC24}, can be found in~\cite{HMPC24}.

An obvious question is to ask what happens for higher moments of the derivatives of $\zeta (s)$. In a companion paper~\cite{HugPC24} we were able to conjecture the answer to this question to leading order for complex moments of the first derivative. This was done using two different methods: the first involved using a similar approach to that taken by Keating and Snaith \cite{KS00a} by considering the moments of the characteristic polynomial of random unitary matrices using Selberg's integral, and the second involved using a similar approach to that taken by Hughes, Gonek, and Keating \cite{GHK07} using the hybrid model of the zeta function.

\begin{conjecture}\label{conj:n1}
	Assume the Riemann Hypothesis. For $\Re(k)>-3$,
	\begin{equation*}
		\sum_{0 < \gamma \leq T} \zeta' \left( \frac{1}{2} + i \gamma \right)^{k} \sim \frac{1}{\Gamma(k +2)} \frac{T}{2 \pi} \left( \log \frac{T}{2\pi} \right)^{k+1} 
	\end{equation*}
    as $T \to \infty$, where $\Gamma (z)$ is the gamma function.
\end{conjecture}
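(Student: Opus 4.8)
The plan is to recover Conjecture~\ref{conj:n1}, first for positive integers $k$ as a specialisation of Conjecture~\ref{RatioMom}, and then to extend it to complex $k$ by the analytic-continuation heuristic of Keating and Snaith~\cite{KS00a}, in agreement with the two independent derivations (via Selberg's integral and via the hybrid model) of~\cite{HugPC24}. For a positive integer $k$, take the shifts $\alpha_1,\dots,\alpha_k$ to be distinct, apply $\frac{\partial}{\partial\alpha_1}\cdots\frac{\partial}{\partial\alpha_k}$ to both sides of~\eqref{eq:ZRatios}, and then let $\alpha_1,\dots,\alpha_k\to0$. Since $\frac{\partial}{\partial\alpha_j}\zeta(\tfrac12+i\gamma+\alpha_j)=\zeta'(\tfrac12+i\gamma+\alpha_j)$, the left-hand side becomes $\sum_{0<\gamma\le T}\zeta'(\tfrac12+i\gamma)^k$, while on the right the term $\frac{T}{2\pi}\log\frac{T}{2\pi}$ is independent of the $\alpha_j$ and is annihilated by the derivatives (as $k\ge1$), and the error term can be taken to remain $O(T^{1/2+\varepsilon})$ after differentiation (by Cauchy's estimate, up to enlarging $\varepsilon$). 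So the task reduces to the asymptotic analysis, as $\alpha\to0$, of the mixed $\alpha$-derivative of $\frac{d}{d\delta}\frac{1}{2\pi}\int_1^T\bigl(Z_{\alpha_1,\dots,\alpha_k,\delta}+\sum_{j=1}^k(t/2\pi)^{-\alpha_j-\delta}Z_{(j)}\bigr)\,dt\big|_{\delta=0}$, where $Z_{(j)}$ is the $j$-th one-swap term.

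Next I would carry out the $t$-integration explicitly. The zero-swap integrand is independent of $t$, so its integral is $(T-1)Z_{\alpha_1,\dots,\alpha_k,\delta}$; the $j$-th one-swap contributes $Z_{(j)}\cdot\frac{2\pi}{1-\alpha_j-\delta}\bigl((T/2\pi)^{1-\alpha_j-\delta}-(1/2\pi)^{1-\alpha_j-\delta}\bigr)$, whose leading part in $T$ is $\frac{T}{2\pi}\,e^{-(\alpha_j+\delta)L}$ up to a factor $1+O(\alpha_j+\delta)$, with $L:=\log\frac{T}{2\pi}$. Using $\zeta(1+x)\sim1/x$ together with $A_{\{\cdot\}}(0)=1$ to expand $Z_{\alpha_1,\dots,\alpha_k,\delta}\approx\prod_i\frac{\alpha_i}{\alpha_i+\delta}$ and $Z_{(j)}\approx\frac{\delta}{\delta+\alpha_j}\prod_{i\ne j}\frac{\alpha_i}{\alpha_i-\alpha_j}$, and then applying $\frac{d}{d\delta}\big|_{\delta=0}$, one finds that the main term equals $\frac{T}{2\pi}\,H_k(\alpha_1,\dots,\alpha_k)$, where
\[
H_k(\alpha_1,\dots,\alpha_k)=-\sum_{i=1}^k\frac{1}{\alpha_i}+\sum_{j=1}^k\frac{e^{-\alpha_j L}}{\alpha_j}\prod_{i\ne j}\frac{\alpha_i}{\alpha_i-\alpha_j}
\]
to leading order in $L$ (the arithmetic factors, and the subleading terms in the $\zeta(1+x)$ expansions, producing only lower powers of $L$). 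Although each summand of $H_k$ is singular as $\alpha\to0$ and on the hyperplanes $\alpha_i=\alpha_j$, the full expression is holomorphic there — this is the pole-cancellation phenomenon noted in the Remark, made rigorous either via the integral representation of the permutation sum (Lemma 6.7 of~\cite{CFZ07}) or by an inclusion–exclusion bookkeeping of the swap terms.

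It then remains to compute $\frac{\partial}{\partial\alpha_1}\cdots\frac{\partial}{\partial\alpha_k}H_k\big|_{\alpha=0}$. Expanding each $e^{-\alpha_j L}$ as a power series and collecting terms, the polar and low-degree parts cancel and one is left with a polynomial in $L$ whose coefficient of $L^{k+1}$ is $\frac{1}{(k+1)!}$ and which contains no higher power of $L$; for instance $H_1(\alpha_1)=\frac{e^{-\alpha_1 L}-1}{\alpha_1}$ gives mixed derivative $\frac{L^2}{2}=\frac{L^2}{\Gamma(3)}$, recovering the theorem of Conrey, Ghosh and Gonek~\cite{CGG88}, while for $k=2$ one gets $\frac{L^3}{6}=\frac{L^3}{\Gamma(4)}$. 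This yields $\sum_{0<\gamma\le T}\zeta'(\tfrac12+i\gamma)^k\sim\frac{1}{\Gamma(k+2)}\frac{T}{2\pi}(\log\frac{T}{2\pi})^{k+1}$ for every positive integer $k$. Finally, the leading coefficient $1/\Gamma(k+2)$ is entire in $k$, so following the Keating–Snaith philosophy — and in line with~\cite{HugPC24} — one conjectures the same asymptotic for all complex $k$ with $\Re(k)>-3$, the constraint being exactly the region in which the associated random matrix average (equivalently, the Selberg-type integral) converges.

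The main obstacle is precisely the pole cancellation in $H_k$: one must show that the $k+1$ individually singular swap terms assemble into a function holomorphic at $\alpha=0$ and across $\alpha_i=\alpha_j$, and then extract the coefficient of the top power $(\log\frac{T}{2\pi})^{k+1}$. This is the familiar ``the recipe produces a holomorphic answer'' miracle; carrying it out cleanly requires either the permutation-sum integral representation of~\cite{CFZ07} or a careful combinatorial argument, together with the verification that the Euler-product factors $A_{\{\cdot\}}(\delta)$ — and the subleading terms of the local zeta expansions — contribute only to lower-order powers of $\log(T/2\pi)$, so that the clean constant $1/\Gamma(k+2)$ is the one that survives.
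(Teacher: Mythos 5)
Your proposal is correct and follows essentially the same route as the paper: you specialise Conjecture~\ref{RatioMom}, replace the arithmetic factors by $1$ and the zeta factors by their poles, and extract the coefficient of $\alpha_1\cdots\alpha_k$ from exactly the expression analysed in Section~\ref{LeadingOrder} (your $H_k$ is the paper's integrand with $L=\log\frac{t}{2\pi}$ evaluated at $t=T$), with the extension to complex $k$, $\Re(k)>-3$, deferred --- just as the paper does --- to the Keating--Snaith/hybrid-model derivations of the companion paper \cite{HugPC24}. The only ingredient you leave as an acknowledged obstacle, namely the cancellation of the singular parts and the identification of the top coefficient $1/(k+1)!$ with no higher power of $L$, is precisely what the paper's Lemma~\ref{LemmaProdSum} (via \eqref{eq:LemmaZero}, \eqref{eq:lem:n0Term} and the case $n\geq k$) provides, so no appeal to Lemma 6.7 of \cite{CFZ07} is required.
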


\begin{remark}
	In \cite{HugPC24} we conjectured the leading order behaviour of the complex $k$\textsuperscript{th} moment for the first derivative, where $\Re(k)>-3$. However, in this paper the method restricts us to non-negative integer $k$.
\end{remark}

Using Conjecture~\ref{RatioMom}, we are able to generalise Conjecture \ref{conj:n1} to moments of mixed derivatives. Applying Conjecture~\ref{RatioMom} forces $k$ to be an integer. We will perform this calculation in Section \ref{LeadingOrder}.

\begin{conjecture}\label{MixMom}
	Assume the Riemann Hypothesis. For $n_1,\dots,n_k \in \NN$, the moments of mixed derivatives of $\zeta (s)$, evaluated at the non-trivial zeros of $\zeta (s)$, are given by
	\begin{multline*}
		\sum_{0 < \gamma \leq T} \zeta^{(n_1)} \left( \frac{1}{2} + i \gamma \right)\dots\zeta^{(n_k)} \left( \frac{1}{2} + i \gamma \right) \\
		\sim (-1)^{n_1+\dots+n_k +k}\frac{n_1!\dots n_k!}{(n_1+\dots+n_k +1)!}  \frac{T}{2 \pi} \left( \log \frac{T}{2\pi} \right)^{n_1+\dots+n_k+1}
	\end{multline*}
	as $T \rightarrow \infty$.
\end{conjecture}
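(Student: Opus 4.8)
The plan is to differentiate Conjecture~\ref{RatioMom} with respect to the shifts, following the Taylor-expansion procedure indicated in the remarks after it. From $\zeta(\tfrac12+i\gamma+\alpha_j)=\sum_{m\ge 0}\tfrac{\alpha_j^m}{m!}\zeta^{(m)}(\tfrac12+i\gamma)$, comparing the coefficient of $\alpha_1^{n_1}\cdots\alpha_k^{n_k}$ on the two sides of \eqref{eq:ZRatios} gives
\[
\sum_{0<\gamma\le T}\ \prod_{j=1}^{k}\zeta^{(n_j)}\!\left(\tfrac12+i\gamma\right)
=\left.\prod_{j=1}^{k}\frac{\partial^{n_j}}{\partial\alpha_j^{n_j}}\,M(\alpha_1,\dots,\alpha_k;T)\right|_{\alpha_1=\cdots=\alpha_k=0}+O\!\left(T^{1/2+\varepsilon}\right),
\]
where $M$ is the main term on the right of \eqref{eq:ZRatios}. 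That the differentiated error is still $O(T^{1/2+\varepsilon})$ follows from Cauchy's formula for derivatives on polydiscs of fixed radius inside the region $|\Re\alpha_j|<\tfrac14$, using the uniformity assumed in the conjecture, and the derivatives of $M$ itself are meaningful because $M$ is holomorphic in the shifts.

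The next step is to isolate the part of $M$ carrying the top power of $L:=\log\tfrac{T}{2\pi}$. The term $\tfrac{T}{2\pi}\log\tfrac{T}{2\pi}$ is annihilated by any $\partial_{\alpha_j}$, and the zero-swap term $Z_{\alpha_1,\dots,\alpha_k,\delta}$ is independent of $t$, so after $\tfrac{d}{d\delta}|_{\delta=0}$, the $t$-integration and the $\alpha$-derivatives it contributes only $O(T)$; hence the leading behaviour comes from the $k$ one-swap terms. In the $j$-th one the quotient accompanying $(\tfrac{t}{2\pi})^{-\alpha_j-\delta}$ vanishes at $\delta=0$, because it carries a factor $1/\zeta(1-\delta)$ tending to $0$; consequently $\tfrac{d}{d\delta}$ cannot fall on the power of $t$, and a short computation gives
\[
\left.\frac{d}{d\delta}\left[\left(\tfrac{t}{2\pi}\right)^{-\alpha_j-\delta}Z_{j}\right]\right|_{\delta=0}
=-\left(\tfrac{t}{2\pi}\right)^{-\alpha_j}\zeta(1-\alpha_j)\,A_{j}(0)\prod_{i\ne j}\frac{\zeta(1+\alpha_i-\alpha_j)}{\zeta(1+\alpha_i)},
\]
where $Z_j$ denotes the $j$-th one-swap integrand and $A_j(0)$ is the corresponding value of the arithmetic factor, analytic and equal to $1$ at $\vec\alpha=0$. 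Integrating in $t$ and summing over $j$ one finds $M(\vec\alpha;T)=\tfrac{T}{2\pi}\,\Psi(\vec\alpha;L)+O(T)$, where $\Psi$ is holomorphic (by the same pole cancellations that make $M$ holomorphic) and equals the sum of these one-swap contributions, the zero-swap derivative $\sum_i\tfrac{\zeta'}{\zeta}(1+\alpha_i)+\partial_\delta A_{\{\vec\alpha\}}(\delta)\big|_{\delta=0}$, and the term $L$.

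The decisive move is the rescaling $\alpha_m=\beta_m/L$. Using $\zeta(1+x)=1/x+O(1)$ and $A_j(0)=1+O(|\vec\alpha|)$, each one-swap contribution becomes $L\,\tfrac{e^{-\beta_j}}{\beta_j}\prod_{i\ne j}\tfrac{\beta_i}{\beta_i-\beta_j}+O(1)$, the zero-swap derivative becomes $-L\sum_i\tfrac1{\beta_i}+O(1)$, so $\Psi(\beta/L;L)=L\,B(\vec\beta)+O(1)$ with
\[
B(\beta_1,\dots,\beta_k)=\sum_{j=1}^{k}\frac{e^{-\beta_j}}{\beta_j}\prod_{i\ne j}\frac{\beta_i}{\beta_i-\beta_j}-\sum_{i=1}^{k}\frac{1}{\beta_i}+1 .
\]
Writing the first sum as $(-1)^{k-1}\big(\prod_i\beta_i\big)\,g[\beta_1,\dots,\beta_k]$, the divided difference of $g(x)=e^{-x}/x^2=\sum_{p\ge0}\tfrac{(-1)^p}{p!}x^{p-2}$, and using linearity together with $(x^p)[\beta_1,\dots,\beta_k]=h_{p-k+1}(\vec\beta)$ for $p\ge k-1$ and $=0$ for $0\le p\le k-2$ (here $h_d$ is the complete homogeneous symmetric polynomial of degree $d$), the $p=0,1$ terms of $g$ produce exactly $\sum_i\beta_i^{-1}-1$, which cancels the other two summands of $B$, the $2\le p\le k$ terms produce $0$, and hence
\[
B(\vec\beta)=(-1)^{k-1}\sum_{p\ge k+1}\frac{(-1)^p}{p!}\Big(\prod_{i=1}^{k}\beta_i\Big)\,h_{p-k-1}(\beta_1,\dots,\beta_k)
\]
is entire. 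Since $\partial_{\alpha_m}=L\,\partial_{\beta_m}$, this yields $\prod_j\partial_{\alpha_j}^{n_j}\big|_{0}M=\tfrac{T}{2\pi}\,L^{N+1}\prod_j\partial_{\beta_j}^{n_j}\big|_{0}B+O(TL^{N})$ with $N:=n_1+\cdots+n_k$, and the coefficient of $\beta_1^{n_1}\cdots\beta_k^{n_k}$ in $B$ is picked out by the single term $p=N+1$, giving $\prod_j\partial_{\beta_j}^{n_j}\big|_{0}B=(-1)^{N+k}\,n_1!\cdots n_k!/(N+1)!$. Substituting this back reproduces the asymptotic claimed in Conjecture~\ref{MixMom}.

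The real work is the bookkeeping compressed into the second and third paragraphs: one must show that every piece discarded on the way to $B$ — the $Z_0$ term, the explicit $\tfrac{T}{2\pi}\log\tfrac{T}{2\pi}$, the Euler--Stieltjes corrections to $\zeta(1+x)=1/x+\dots$, the non-constant parts of all the arithmetic factors, and the differentiated error — contributes a power of $\log\tfrac{T}{2\pi}$ strictly below $N+1$. This is delicate because the individual one-swap terms are only meromorphic near $\vec\alpha=0$, with poles along the diagonals $\alpha_i=\alpha_j$ and along $\alpha_i=0$, even though the combinations that actually occur are holomorphic; so the growth estimates and coefficient extractions have to be carried out on those holomorphic combinations, or via contour integrals over polydiscs with pairwise distinct radii. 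The rescaling $\alpha_m=\beta_m/L$ is what keeps this under control, since it collapses the main term onto the single entire function $B$.
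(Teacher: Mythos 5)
Your argument is correct and is essentially the paper's own derivation in Section~\ref{LeadingOrder}: you differentiate Conjecture~\ref{RatioMom}, use $1/\zeta(1-\delta)\to 0$ to reduce the $\delta$-derivative to the surviving one-swap pieces plus $\sum_j\zeta'/\zeta(1+\alpha_j)$, replace the zeta factors by their polar parts and the arithmetic factors by $1$, and then invoke the same combinatorial identity — your divided-difference evaluation of $(x^p)[\beta_1,\dots,\beta_k]$ is exactly Lemma~\ref{LemmaProdSum}/\eqref{eq:combsum} — to cancel the poles and pick out the coefficient of $\alpha_1^{n_1}\cdots\alpha_k^{n_k}$. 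The rescaling $\alpha_m=\beta_m/L$ and the packaging into the entire function $B$ (mirroring the random matrix limit of Theorem~\ref{thm:CharPolyShiftsLeadingOrder}) are only cosmetic differences from the paper's term-by-term expansion.
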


Setting all the derivatives to be equal gives the obvious corollary.

\begin{conjecture}\label{kthMom}
	Assume the Riemann Hypothesis. For  a non-negative integer $k$ and $n \in \mathbb{N}$, the $k$\textsuperscript{th} moment of the $n$\textsuperscript{th} derivative of $\zeta (s)$, evaluated at the non-trivial zeros of $\zeta (s)$, is given by
	\begin{equation*}
		\sum_{0 < \gamma \leq T} \zeta^{(n)} \left(\frac{1}{2} + i \gamma  \right)^{k} \sim (-1)^{k(n +1)}\frac{(n!)^k}{(kn +1)!} \frac{T}{2 \pi} \left( \log \frac{T}{2\pi} \right)^{kn + 1}
	\end{equation*}
	as $T \rightarrow \infty$.
\end{conjecture}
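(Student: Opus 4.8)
\emph{Proof proposal.} The plan is to obtain Conjecture~\ref{kthMom} directly from Conjecture~\ref{MixMom} by specialisation; no new analytic input is needed beyond the hypotheses already in force. Concretely, I would take the statement of Conjecture~\ref{MixMom} and set $n_1 = n_2 = \dots = n_k = n$.

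On the left-hand side this turns $\zeta^{(n_1)}\!\left(\frac{1}{2}+i\gamma\right)\cdots\zeta^{(n_k)}\!\left(\frac{1}{2}+i\gamma\right)$ into $\zeta^{(n)}\!\left(\frac{1}{2}+i\gamma\right)^{k}$. On the right-hand side one only has to collect the three places where the $n_j$ appear: the power of the logarithm becomes $\left(\log\frac{T}{2\pi}\right)^{n_1+\dots+n_k+1} = \left(\log\frac{T}{2\pi}\right)^{kn+1}$; the combinatorial constant becomes $\frac{n_1!\cdots n_k!}{(n_1+\dots+n_k+1)!} = \frac{(n!)^{k}}{(kn+1)!}$; and the sign becomes $(-1)^{n_1+\dots+n_k+k} = (-1)^{kn+k} = (-1)^{k(n+1)}$. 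Substituting these in reproduces exactly the asymptotic asserted in Conjecture~\ref{kthMom}.

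Because the corollary is a pure substitution, I do not expect any genuine obstacle: all of the real work --- reducing to Conjecture~\ref{RatioMom}, introducing $k$ independent shift variables $\alpha_1,\dots,\alpha_k$, applying $\prod_{j=1}^{k}\frac{\partial^{n}}{\partial\alpha_j^{n}}$ and letting the shifts tend to zero, and isolating the leading-order term --- is already carried out in Section~\ref{LeadingOrder} in the derivation of Conjecture~\ref{MixMom}. The only point needing care is the elementary bookkeeping of the sign exponent and the factorials displayed above. As a consistency check one may note the degenerate case $k=0$: the left-hand side is then $\sum_{0<\gamma\leq T}1 = N(T)$, while the right-hand side collapses (empty product, $0!=1$, $(-1)^{0}=1$) to $\frac{T}{2\pi}\log\frac{T}{2\pi}$, in agreement with the Riemann--von Mangoldt formula and with the leading term in Conjecture~\ref{RatioMom}.
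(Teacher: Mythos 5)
Your proposal is correct and matches the paper exactly: the paper derives Conjecture~\ref{kthMom} simply by setting all $n_\ell$ equal to $n$ in Conjecture~\ref{MixMom} at the end of Section~\ref{LeadingOrder}, and your sign and factorial bookkeeping $(-1)^{n_1+\dots+n_k+k}=(-1)^{k(n+1)}$, $\frac{n_1!\cdots n_k!}{(n_1+\dots+n_k+1)!}=\frac{(n!)^k}{(kn+1)!}$ is exactly what is needed. The $k=0$ consistency check with the Riemann--von Mangoldt formula is a nice extra observation not made in the paper.
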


While we have gained the moments of mixed derivatives to leading order in these conjectures, we have lost some of the information that we are after in this paper - namely the full asymptotic expansions for the moments of mixed derivatives. We can instead expand the functions in Conjecture~\ref{RatioMom} fully, to obtain full asymptotic expansions.

We give two examples of obtaining the full asymptotic using Conjecture~\ref{RatioMom}. The first example involves finding an integral form of the full asymptotic for the Generalised Shanks' Conjecture, first given in \cite{HugPC22}. The methodology employed in this paper gives that result in an equivalent (but simpler) integral form, and as has been seen in recent times, is the `correct' way of writing moments are as integrals.  It comes from taking $j=1$ and expanding about the shift $\alpha_1$.
\begin{theorem}\label{thm:Shanks}
	For $n$ a positive integer and $\rho=1/2 + i\gamma$ a non-trivial zero of the Riemann zeta function,
	\begin{multline*}
		\sum_{0 < \gamma \leq T} \zeta ^{(n)} \left(\frac{1}{2} + i \gamma \right) =  \frac{n!}{2 \pi} \int_{1}^{T} \left( A_n + \frac{(-1)^{n+1} L^{n+1}}{(n+1)!} + \sum_{m=0}^{n} \frac{(-1)^{m+1} L^m \gamma_{n-m}}{m! (n-m)!} \right) \ dt + O \left( T^{1/2 +\varepsilon} \right),
	\end{multline*}
	where $L = \log \frac{t}{2\pi}$, and the $\gamma_n$ are the coefficients in the Laurent expansion of $\zeta (s)$ about $s=1$, given by
    \begin{equation}\label{eq:zetalaur}
    \zeta(s) = \frac{1}{s-1} +\gamma_0 - \gamma_1 (s-1) +\dots + \frac{(-1)^n}{n!}\gamma_n (s-1)^n +O\left((s-1)^{n+1}\right)
    \end{equation}
    and the $A_n$ are the coefficients in the Laurent expansion of $\zeta' (s)/\zeta (s)$ about $s=1$, given by
    \begin{equation*}
    \frac{\zeta ' (s)}{\zeta (s)} = - \frac{1}{s-1} +A_0 + A_1 (s-1) + \dots + A_n (s-1)^n +O\left((s-1)^{n+1}\right),
    \end{equation*}
\end{theorem}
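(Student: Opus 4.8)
The plan is to apply Conjecture~\ref{RatioMom} in the case $k=1$ and then read off the Taylor coefficient of $\alpha_1^n$. First I would note that when $k=1$ the only term in the numerator of \eqref{eq:A} is $F_1(p)=-p^{-(1+\alpha_1)}$, which carries no $\delta$-dependence, so $A_{\{\alpha_1\}}(\delta)\equiv 1$; for the same reason the arithmetic factor in the single swap term is identically $1$. Hence Conjecture~\ref{RatioMom} specialises to
\[
\sum_{0<\gamma\leq T}\zeta\!\left(\tfrac12+i\gamma+\alpha_1\right)= \frac{d}{d\delta}\,\frac{1}{2\pi}\int_{1}^{T}\!\left[\frac{\zeta(1+\alpha_1+\delta)}{\zeta(1+\alpha_1)} + \left(\tfrac{t}{2\pi}\right)^{-\alpha_1-\delta}\frac{\zeta(1-\delta-\alpha_1)}{\zeta(1-\delta)}\right]dt\bigg|_{\delta=0} + \frac{T}{2\pi}\log\frac{T}{2\pi} + O\!\left(T^{1/2+\varepsilon}\right).
\]

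Next I would carry out the $\delta$-differentiation at $\delta=0$. The first summand contributes $\zeta'(1+\alpha_1)/\zeta(1+\alpha_1)$. For the swap term the key observation is that $1/\zeta(1-\delta)=-\delta+O(\delta^2)$ vanishes to first order at $\delta=0$, so of the terms produced by the product rule only the one in which $d/d\delta$ lands on this factor survives; in particular the $\log(t/2\pi)$ generated by differentiating $(t/2\pi)^{-\alpha_1-\delta}$ is annihilated. This leaves the main term in the shape
\[
\frac{1}{2\pi}\int_{1}^{T}\!\left[\frac{\zeta'(1+\alpha_1)}{\zeta(1+\alpha_1)} - \zeta(1-\alpha_1)\left(\tfrac{t}{2\pi}\right)^{-\alpha_1}\right]dt .
\]

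Then I would extract coefficients using $\zeta\!\left(\tfrac12+i\gamma+\alpha_1\right)=\sum_{n\geq0}\frac{\alpha_1^n}{n!}\,\zeta^{(n)}\!\left(\tfrac12+i\gamma\right)$, so that $\sum_{0<\gamma\leq T}\zeta^{(n)}(\tfrac12+i\gamma)$ equals $n!$ times the coefficient of $\alpha_1^n$ on the right. Substituting the stated Laurent expansion of $\zeta'/\zeta$ at $s=1+\alpha_1$, expansion~\eqref{eq:zetalaur} at $s=1-\alpha_1$ (which gives $\zeta(1-\alpha_1)=-1/\alpha_1+\sum_{j\geq0}\frac{\gamma_j}{j!}\alpha_1^j$), and $(t/2\pi)^{-\alpha_1}=\sum_{m\geq0}\frac{(-1)^m}{m!}L^m\alpha_1^m$ with $L=\log(t/2\pi)$, one checks that the two $-1/\alpha_1$ poles cancel and that the coefficient of $\alpha_1^n$ is precisely
\[
A_n + \frac{(-1)^{n+1}}{(n+1)!}L^{n+1} + \sum_{m=0}^{n}\frac{(-1)^{m+1}}{m!\,(n-m)!}L^{m}\gamma_{n-m}.
\]
Since $n\geq1$, the $\alpha_1$-free term $\frac{T}{2\pi}\log\frac{T}{2\pi}$ does not contribute, and multiplying through by $n!$ yields the claimed identity.

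The only genuinely delicate point is that taking a Taylor coefficient should not damage the error term. I would handle this by applying Cauchy's integral formula on a fixed circle $|\alpha_1|=r$ with $r<\tfrac14$: since the $O(T^{1/2+\varepsilon})$ error in Conjecture~\ref{RatioMom} is assumed to hold uniformly for such $\alpha_1$, the coefficient of $\alpha_1^n$ inherits an $O_n(T^{1/2+\varepsilon})$ bound. Beyond this, the bookkeeping of the pole cancellation in the step above is the main thing to get right; everything else is routine manipulation of Laurent series. One may also verify that, after rearrangement, the resulting formula agrees with the full asymptotic for the generalised Shanks conjecture established in~\cite{HugPC22}.
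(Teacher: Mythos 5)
Your proposal is correct and follows essentially the same route as the paper: specialise Conjecture~\ref{RatioMom} to $k=1$, observe that the arithmetic factors are identically $1$, carry out the $\delta$-derivative (where only the term hitting $1/\zeta(1-\delta)$ survives, giving $-\left(\frac{t}{2\pi}\right)^{-\alpha}\zeta(1-\alpha)$), and then extract the coefficient of $\alpha^n$ from the Laurent/Taylor expansions, with the two $1/\alpha$ poles cancelling. The paper does exactly this via its equation \eqref{eq:RatConW} in Section~\ref{ShanksInt}; your extra remark on using Cauchy's formula to justify that coefficient extraction preserves the $O(T^{1/2+\varepsilon})$ error is a harmless refinement of the same argument.
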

Note that while we have assumed the Riemann Hypothesis in this result, in \cite{HugPC22} it is actually proven unconditionally. The only change occurs in the error term when we assume the Riemann Hypothesis, and in that paper, we show that it truely is $O \left( T^{1/2 +\varepsilon} \right)$ (in fact, we make the $T^\varepsilon$ explicit in terms of powers of a logarithm).

The second example is a conjecture for the second moment of the first derivative of the Riemann zeta function, a previously unknown result. Specifically, we conjecture the following result.
\begin{conjecture}\label{conj:secmom}
	For $L = \log \frac{t}{2\pi}$ and $\rho=1/2 + i\gamma$ a non-trivial zero of the Riemann zeta function,
	\begin{multline*}
		\sum_{0 < \gamma \leq T} \zeta ' \left(\frac{1}{2} + i \gamma \right)^2 = \\
		\frac{1}{2 \pi} \int_{1}^{T} \left( \frac{1}{6}L^3 + \frac{1}{2}L^2 \left(2\gamma_0 +A^{(0,0,1)}  \right) \right. + \frac{1}{2} L \left( -8\gamma_1 + 4\gamma_0 A^{(0,0,1)} + A^{(0,0,2)}  + 2A^{(0,1,1)} \right) \\
		+ \frac{1}{6} \left( -12\gamma_0^3 - 36\gamma_0 \gamma_1 + 6\gamma_2 -24 \gamma_1 A^{(0,0,1)} + 6 \gamma_0 A^{(0,0,2)} + A^{(0,0,3)} \right. \\
		\left. + 12 \gamma_0 A^{(0,1,1)} + 3 A^{(0,1,2)}  -3 A^{(0,2,1)} +6 A^{(1,1,1)}  \right) \ dt + O\left(T^{1/2 +\varepsilon}\right)
	\end{multline*}
	where the $\gamma_n$ are the coefficients in the Laurent expansion of $\zeta (s)$ about $s=1$ given in \eqref{eq:zetalaur} and the $A^{(i,j,k)}$ are arithmetic terms that are various products and sums over primes.
\end{conjecture}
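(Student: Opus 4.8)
\section*{Proof proposal}

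The plan is to derive Conjecture~\ref{conj:secmom} from Conjecture~\ref{RatioMom} by specialising to $k=2$ with shifts $\alpha_1,\alpha_2$ and then extracting the second moment of $\zeta'$. Taylor-expanding each factor about its shift, the coefficient of $\alpha_1\alpha_2$ in $\zeta(\tfrac12+i\gamma+\alpha_1)\zeta(\tfrac12+i\gamma+\alpha_2)$ equals $\zeta'(\tfrac12+i\gamma)^2$, so I would apply $\partial_{\alpha_1}\partial_{\alpha_2}$ to both sides of \eqref{eq:ZRatios} and set $\alpha_1=\alpha_2=0$. On the right this annihilates the shift-independent term $\tfrac{T}{2\pi}\log\tfrac{T}{2\pi}$ and commutes with $\int_1^T dt$ and with $\tfrac{d}{d\delta}\big|_{\delta=0}$ (the main term being holomorphic near the origin, cf.\ Remark~(5) and Section~\ref{ProofConj3}); the error stays $O(T^{1/2+\varepsilon})$ by a Cauchy estimate on a polydisc of fixed radius in the shifts (the difference of the left side and the explicit main term being holomorphic in the $\alpha_j$). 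One is left with
\begin{multline*}
\sum_{0<\gamma\le T}\zeta'\big(\tfrac12+i\gamma\big)^2
=\frac{1}{2\pi}\int_1^T \partial_{\alpha_1}\partial_{\alpha_2}\partial_{\delta}\Big[\,Z_{\alpha_1,\alpha_2,\delta}\\
+\big(\tfrac{t}{2\pi}\big)^{-\alpha_1-\delta}Z_{-\delta,\alpha_2,-\alpha_1}
+\big(\tfrac{t}{2\pi}\big)^{-\alpha_2-\delta}Z_{\alpha_1,-\delta,-\alpha_2}\,\Big]_{\alpha_1=\alpha_2=\delta=0}\,dt
+O\big(T^{1/2+\varepsilon}\big),
\end{multline*}
so the whole task reduces to reading off one Taylor coefficient of this three-term sum.

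Next I would expand every ingredient as a Laurent/Taylor series in $(\alpha_1,\alpha_2,\delta)$ about the origin. The prefactors $(t/2\pi)^{-\alpha_j-\delta}=\exp\!\big(-(\alpha_j+\delta)L\big)$, with $L:=\log(t/2\pi)$, expand as $\sum_{m\ge0}\tfrac{(-L)^m}{m!}(\alpha_j+\delta)^m$, and are the only source of powers of $L$. Each zeta value $\zeta(1+x)$ appearing in the $Z$'s --- $x$ being one of the linear forms $\alpha_1+\delta,\ \alpha_2+\delta,\ \alpha_1,\ \alpha_2,\ -\delta,\ -\delta-\alpha_1,\ -\delta-\alpha_2,\ \alpha_2-\alpha_1$ --- I would expand via the normalisation of \eqref{eq:zetalaur}, $\zeta(1+x)=\tfrac1x+\gamma_0-\gamma_1 x+\tfrac{\gamma_2}{2}x^2-\cdots$. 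The arithmetic factor I would expand from its Euler product \eqref{eq:A}, which converges and is analytic for the assumed shift ranges, and I would set $A^{(i,j,k)}:=\partial_{\alpha_1}^{i}\partial_{\alpha_2}^{j}\partial_{\delta}^{k}A_{\{\alpha_1,\alpha_2\}}(\delta)\big|_{0}$, a sum over primes obtained by logarithmic differentiation of \eqref{eq:A}. By Remark~(1) we have $A_{\{\alpha_1,\alpha_2\}}(0)\equiv1$, so $A^{(i,j,0)}=0$ for $i+j\ge1$ --- which is why every arithmetic constant in the stated answer carries at least one $\delta$-derivative --- and the symmetry of $A$ in $\alpha_1\leftrightarrow\alpha_2$ gives $A^{(i,j,k)}=A^{(j,i,k)}$, so that, for example, the $(2,0,1)$ and $(0,2,1)$ contributions coalesce.

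The main obstacle is that no single $Z$-term is holomorphic at the origin: each has poles along $\alpha_1+\delta=0$, $\alpha_2+\delta=0$, $\delta=0$ and $\alpha_1=\alpha_2$, coming from the zeta factors whose argument tends to $1$, and only the sum of the three is regular (Remark~(5); Lemma~6.7 of~\cite{CFZ07}). I would handle this by combining the three terms before reading off the coefficient: either substitute the expansions above, add, check that every principal part cancels, and Taylor-expand the holomorphic remainder to order $\alpha_1\alpha_2\delta$; or, more robustly, represent the permutation sum by the contour integral of Lemma~6.7 of~\cite{CFZ07} and extract $\partial_{\alpha_1}\partial_{\alpha_2}\partial_{\delta}\big|_{0}$ as a threefold residue, which makes the cancellation automatic. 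Organising the output by powers of $L$: the coefficient of $L^m$ collects precisely the contributions in which the exponentials supplied $m$ of the available factors of $(\alpha_j+\delta)$, the rest of the monomial $\alpha_1\alpha_2\delta$ being supplied by the zeta-ratios and by $A$; since that monomial has total degree $3$, only $m\le3$ occurs, yielding a cubic in $L$, as claimed. This pole bookkeeping --- keeping the many Laurent terms straight and verifying the cancellations order by order --- is the only genuine difficulty; the rest is routine power-series algebra.

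Finally I would collect the coefficients of $L^3,L^2,L$ and $L^0$, re-express all $\zeta$-Laurent coefficients as the Stieltjes constants $\gamma_n$ of \eqref{eq:zetalaur} and all arithmetic contributions as the $A^{(i,j,k)}$, and check that the outcome matches the displayed expression, with the error term inherited verbatim from Conjecture~\ref{RatioMom}. As consistency checks, one can confirm that the $L^3$-coefficient $\tfrac16$ agrees with Conjecture~\ref{kthMom} at $n=1$, $k=2$, and that running the identical procedure with a single shift ($k=1$, one $\alpha_1$-derivative) reproduces Theorem~\ref{thm:Shanks}.
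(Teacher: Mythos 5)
Your proposal follows essentially the same route as the paper: specialise Conjecture~\ref{RatioMom} to $k=2$, differentiate in the shifts (the paper first performs the $\delta$-derivative, arriving at \eqref{eq:RatConW}, and then extracts the coefficient of $\alpha\beta$), expand the zeta factors and the arithmetic factor about the origin with the poles cancelling in the combined sum, and simplify using $A^{(i,j,0)}=0$, $A^{(0,0,0)}=1$ and the symmetry $A^{(i,j,k)}=A^{(j,i,k)}$. The only step you leave undone is the actual series computation that produces the specific numerical coefficients in the display, which the paper carries out with a computer package.
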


We finish this results section by reminding the reader that if we want to obtain the full asymptotic for any moment of mixed derivatives, then all we need to do is expand the terms in the integral in Conjecture~\ref{RatioMom}. A computer package is helpful to perform these expansions, and to calculate numerically what the arithmetic terms should equal.

\section{Outline of the Paper}
As we have seen in the introduction, Conjecture~\ref{RatioMom} and its consequences lies at the intersection of several well-established areas, each with a rich historical background.

In Section~\ref{sect:MomRMTHis}, we place our results within the framework of random matrix theory. We explain the relevance of random matrix models to number-theoretic moments and present two new results concerning shifted moments and moments of mixed derivatives of characteristic polynomials, evaluated at matrix eigenvalues. Theorem~\ref{thm:CharPolyDeriv} is the random matrix analogue of Conjecture~\ref{MixMom}.

Section~\ref{sect:RatHis} discusses the origins of the Ratios Conjecture and outlines the methodology used to derive conjectures using this framework. This methodology forms the basis for our later formulation of Conjecture~\ref{RatioMom}.

Section~\ref{Prelim} presents a set of preliminary lemmas, particularly involving relations between Vandermonde determinants and links to complete homogeneous symmetric polynomial. These lemmas play a key role in evaluating various functions involving the shifts introduced in our framework.

In Section~\ref{sect:RMTMot}, we prove the random matrix results stated earlier. The calculations involve solving a recurrence relation for certain Toeplitz determinants, which arise naturally when computing expectations over the unitary group. These results rely on the lemmas introduced in Section~\ref{Prelim}.

Section~\ref{ProofConj3} applies the Ratios Conjecture framework to formulate Conjecture~\ref{RatioMom}. This involves expressing the shifted zeta moment as a Cauchy integral, and applying standard tricks to evaluate the integral. We show that many contributions vanish into the error term, and the remaining integral is amenable to analysis via the Ratios methodology from Section~\ref{sect:RatHis}.

In Section~\ref{LeadingOrder}, we derive the leading-order behaviour of the moments of mixed derivatives, as stated in Conjecture~\ref{MixMom}. This requires expanding the integrand in Conjecture~\ref{RatioMom} and carefully analyzing its dependence on the shifts.

Finally, Section~\ref{Examples} illustrates the power of Conjecture~\ref{RatioMom} through two worked examples. In Example~\ref{ShanksInt}, we confirm that the Ratios Conjecture predicts the correct asymptotic for the first discrete moment of the derivatives of $\zeta(s)$, reproducing the integral form of the result from~\cite{HugPC22}. In Example~\ref{2ndMom}, we present, for the first time, an explicit conjecture for the second discrete moment of $\zeta'(s)$, supported by numerical evidence.

\section{Moments of the zeta function through random matrix theory}\label{sect:MomRMTHis}

In this section we describe the philosophy of Keating and Snaith \cite{KS00a} where random matrix theory was used to conjecture moments of the zeta function.

It had been conjectured that the limiting distribution of the non-trivial zeros of $\zeta (s)$, on the scale of their mean spacing, is asymptotically the same as that of the eigenvalues of matrices chosen according to Haar measure, in the limit as the matrix size tends to infinity.

The function that the zeros of the zeta function are the zeros of is, by definition, the zeta function. Keating and Snaith asked which function are the eigenvalues the zeros of? This is exactly the characteristic polynomial, so in \cite{KS00a} Keating and Snaith proposed that the characteristic polynomial of a large random unitary matrix can be used to model the value distribution of the Riemann zeta function near a large height $T$.

They set
\begin{align}
Z(\theta) &= \det\left(I-U e^{i\theta}\right) \notag\\
&=\prod_{m=1}^N \left(1-e^{-i\theta_m} e^{i\theta} \right) \label{eq:CharPoly}
\end{align}
for the characteristic polynomial of a random unitary matrix $U$, where $e^{i\theta_1},\dots,e^{i\theta_N}$ are the eigenvalues the matrix $U$.

They calculated the moments of the characteristic polynomial of random unitary matrices, averaged over Haar measure on $N\times N$ unitary matrices, and proved the following exact result.
\begin{theorem}[Keating--Snaith]
Let $\E_N$ denote the expectation over Haar measure on the $N\times N$ unitary matrices, and let $Z(\theta)$ be the characteristic polynomial given by \eqref{eq:CharPoly}. For any fixed real $\theta$ and $\Re(k)>~-1/2$,
\begin{align*}
\E_N\left[ \left|Z(\theta)\right|^{2k} \right] &= \prod_{j=1}^N \frac{\Gamma(j) \Gamma(j+2k)}{\Gamma(j+k)^2}\\
&=  \frac{G^2(k+1) G(2k+N+1) G(N+1)}{G(2k+1) G^2(k+N+1)}
\end{align*}
where $G(z)$ is the Barnes $G$-function. For $k$ fixed and $N\to\infty$ this satisfies
\[
\E_N\left[ \left|Z(\theta)\right|^{2k} \right] \sim \frac{G^2(k+1)}{G(2k+1)} N^{k^2}.
\]
\end{theorem}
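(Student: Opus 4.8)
The plan is to reduce the expectation to a Selberg-type integral by rotation symmetry, evaluate that integral in closed form, rewrite the answer through the Barnes $G$-function, and then read off the large-$N$ asymptotics.

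First I would use rotation invariance: the map $U\mapsto e^{-i\theta}U$ is a measure-preserving bijection of $U(N)$ carrying $Z(0)$ to $Z(\theta)$, so the law of $|Z(\theta)|$ is independent of $\theta$ and we may take $\theta=0$. By Weyl's integration formula the eigenangles then have joint density proportional to $\prod_{1\le j<\ell\le N}|e^{i\theta_j}-e^{i\theta_\ell}|^2$ on $[0,2\pi]^N$, so that
\[
\E_N\!\left[|Z(0)|^{2k}\right]=\frac{1}{N!\,(2\pi)^N}\int_{[0,2\pi]^N}\prod_{m=1}^N|1-e^{i\theta_m}|^{2k}\prod_{1\le j<\ell\le N}|e^{i\theta_j}-e^{i\theta_\ell}|^{2}\,d\theta_1\cdots d\theta_N .
\]
This integral converges precisely for $\Re(k)>-1/2$, matching the hypothesis, and since both sides of the asserted identity are holomorphic on that half-plane it suffices to prove it for non-negative integer $k$ and then analytically continue.

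Next I would evaluate the integral, which is the $\gamma=1$ instance of Morris's circular analogue of the Selberg integral --- equivalently, the $N\times N$ Toeplitz determinant with the pure Fisher--Hartwig symbol $|1-e^{i\theta}|^{2k}$. One option is to cite Morris/Selberg directly. A self-contained route is to expand both Vandermonde factors as determinants in the characters $e^{in\theta_m}$, apply Andr\'eief's identity to collapse the $N$-fold integral into an $N\times N$ determinant whose entries are the Fourier coefficients $\frac{1}{2\pi}\int_0^{2\pi}e^{in\theta}|1-e^{i\theta}|^{2k}\,d\theta$ (a ratio of Gamma functions in $k$ and $n$, namely $\pm\binom{2k}{k+n}$ for integer $k$), and then evaluate this Toeplitz determinant by row reduction or induction on $N$ to get $\prod_{j=1}^N\Gamma(j)\Gamma(j+2k)/\Gamma(j+k)^2$. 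Rewriting each Gamma via $\Gamma(z)=G(z+1)/G(z)$ and telescoping --- using only $G(1)=1$ and $G(z+1)=\Gamma(z)G(z)$ --- turns this product into $\frac{G^2(k+1)\,G(2k+N+1)\,G(N+1)}{G(2k+1)\,G^2(k+N+1)}$, which is pure bookkeeping.

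For the $N\to\infty$ statement, insert the standard expansion $\log G(1+z)=\tfrac12 z^2\log z-\tfrac34 z^2+\tfrac12 z\log2\pi-\tfrac1{12}\log z+\zeta'(-1)+o(1)$ into the three $N$-dependent Barnes factors, whose arguments are $N+2k$, $N$, and $N+k$ (the last doubled, in the denominator); a short computation shows that all terms cancel except a residual $k^2\log N$, giving $\E_N[|Z(\theta)|^{2k}]\sim \frac{G^2(k+1)}{G(2k+1)}N^{k^2}$. One can also bypass the Barnes asymptotics entirely by observing that $\log\big(\Gamma(j)\Gamma(j+2k)/\Gamma(j+k)^2\big)=k^2/j+O(j^{-2})$ and summing over $j\le N$. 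The only genuinely substantial step is the closed-form evaluation of the Selberg/Morris integral: quoting Morris's formula makes it immediate, but a self-contained treatment requires carrying out the Andr\'eief reduction and the determinant evaluation for general complex $k$ before analytic continuation can even be invoked.
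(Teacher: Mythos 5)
This theorem is not proved in the paper at all: it is quoted from Keating and Snaith \cite{KS00a}, so the comparison here is with their original argument rather than with anything in the text. Your route is essentially that original one — rotation invariance plus the Weyl integration formula, then the Selberg/Morris circular integral — and your Toeplitz--Andr\'eief variant is in fact close in spirit to the machinery the paper itself uses in Section~\ref{sect:RMTMot} (Heine's identity and Toeplitz determinants). Two caveats. First, the reduction ``prove it for non-negative integer $k$ and then analytically continue'' is not justified as stated: the non-negative integers have no accumulation point in the half-plane $\Re(k)>-1/2$, so the identity theorem gives nothing; you would need Carlson's theorem together with growth estimates in $k$, or — much more simply — invoke the Selberg/Morris evaluation (or carry out the Andr\'eief reduction, whose Fourier coefficients $\Gamma(2k+1)/\bigl(\Gamma(k+n+1)\Gamma(k-n+1)\bigr)$ make sense for all such $k$) directly for complex $k$, which is what Keating--Snaith do and which you yourself flag at the end; with that choice the gap disappears. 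Second, the proposed shortcut of summing $\log\bigl(\Gamma(j)\Gamma(j+2k)/\Gamma(j+k)^2\bigr)=k^2/j+O(j^{-2})$ over $j\le N$ establishes the exponent $N^{k^2}$ but not the constant $G^2(k+1)/G(2k+1)$, so it does not ``bypass'' the Barnes asymptotics for the full asymptotic statement; the expansion of $\log G(1+z)$ applied to the three $N$-dependent factors, as in your main route, is still needed (and does work, since the $z\log 2\pi$, $\log z$ and constant terms cancel in the combination $N$, $N+2k$, $N+k$ doubled). With these repairs the proof is complete and standard.
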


They then argued that these results should (in some precise way) match the equivalent moments of the zeta function, at least at leading order. By comparing their result with all known and conjectured results for the moments of zeta, they conjectured the following result.
\begin{conjecture}[Keating--Snaith]
For any fixed $k$ with $\Re (k) > -1/2$,
\begin{equation*}
     \int_{0}^{T} \left|\zeta \left(\frac{1}{2} + it \right) \right|^{2k} \ dt \sim  \frac{G^2 (k+1)}{G(2k+1)} a(k) T \left( \log \frac{T}{2\pi} \right)^{k^2}
\end{equation*}
as $T \rightarrow \infty$, where $G(z)$ is the Barnes $G$-function and
\begin{equation}\label{AF}
    a(k) = \prod_{p \text{ prime}} \left( 1 - \frac{1}{p} \right)^{k^2} \sum_{m=0}^{\infty} \left( \frac{\Gamma (m+k)}{m! \, \Gamma (k)} \right)^2 p^{-m}
\end{equation}
is an arithmetic factor.
\end{conjecture}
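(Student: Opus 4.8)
The plan is to \emph{derive} this asymptotic heuristically, in the spirit of Keating and Snaith, by feeding the exact random matrix computation above into a model for $\zeta$ and then restoring the arithmetic information that the matrix model discards. I would not attempt a rigorous proof: for $k \geq 3$ even the correct order of magnitude of the left-hand side is open, so what follows is a recipe for arriving at the stated formula rather than a verification of it.

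\emph{Step 1: the random matrix input.} I would adopt the hypothesis that the local statistics of the zeros near height $T$ coincide with those of the eigenangles of an $N\times N$ Haar-distributed unitary matrix. Equating the mean density of zeros, $\tfrac{1}{2\pi}\log\tfrac{T}{2\pi}$, with the mean density $\tfrac{N}{2\pi}$ of eigenangles forces the identification $N=\log\tfrac{T}{2\pi}$. Under this dictionary $Z(\theta)$ models $\zeta(\tfrac12+it)$ for $t$ near $T$, so the Keating--Snaith theorem predicts
\[
\frac{1}{T}\int_0^T\left|\zeta\left(\tfrac12+it\right)\right|^{2k}\,dt \;\approx\; \E_N\!\left[\,|Z(\theta)|^{2k}\,\right] \;\sim\; \frac{G^2(k+1)}{G(2k+1)}\left(\log\tfrac{T}{2\pi}\right)^{k^2},
\]
which already yields the transcendental constant and the power of the logarithm; multiplying by $T$ gives the shape of the claimed asymptotic.

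\emph{Step 2: the arithmetic factor.} The matrix model cannot see the primes, so $a(k)$ must be supplied separately. One route is calibration: demanding agreement with the cases that are known or independently conjectured --- $k=1$ (Hardy--Littlewood), $k=2$ (Ingham), and $k=3,4$ (the conjectures of Conrey--Ghosh and Conrey--Gonek) --- forces the local Euler factor at $p$ to be $(1-p^{-1})^{k^2}\sum_{m\geq 0}\bigl(\Gamma(m+k)/(m!\,\Gamma(k))\bigr)^2 p^{-m}$, that is, \eqref{AF}. A more structural route is the hybrid/Euler-product heuristic: factor $\zeta(\tfrac12+it)\approx P_X(t)\,Z_X(t)$ into a truncated Euler product over $p\leq X$ and a product over nearby zeros, compute $\E[\,|P_X|^{2k}\,]$ by expanding $(1-p^{-1/2-it})^{-k}$ and using that the average of $p^{-it}$ over a long range vanishes, and then observe that the partial product $\prod_{p\leq X}(1-p^{-1})^{-k^2}$ grows like $(e^{\gamma}\log X)^{k^2}$ (Mertens) while the $Z_X$ part contributes $\tfrac{G^2(k+1)}{G(2k+1)}\bigl(\log T/(e^{\gamma}\log X)\bigr)^{k^2}$ from Step 1 applied on the complementary scale; the split point $X$ cancels and the product is $\tfrac{G^2(k+1)}{G(2k+1)}\,a(k)\,(\log\tfrac{T}{2\pi})^{k^2}$.

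\emph{The main obstacle.} There is no rigorous bridge at any stage. The choice $N=\log\tfrac{T}{2\pi}$ is a density match, not a theorem; the assertion that the $2k$th moment of $\zeta$ tracks that of $Z(\theta)$ is precisely what one is trying to establish; and Step 2 rests on the factorisation of $\zeta$ into effectively independent ``arithmetic'' and ``random'' pieces, which is itself only conjectural --- making the Euler-product side rigorous would require controlling the off-diagonal contributions $p^{-it}q^{it}$ uniformly as $X$ grows with $T$, and the matrix side would require an unconditional high-moment asymptotic for $\zeta$, exactly the open problem at hand. So the honest status of the argument is: assemble the two ingredients, check that their product reproduces every case already known, and conjecture that the pattern persists for all $k$ with $\Re(k)>-1/2$.
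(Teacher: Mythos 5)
Your derivation is, in substance, exactly how the paper presents this statement: it is a conjecture formed by taking the Keating--Snaith random matrix moment with the density identification $N=\log\frac{T}{2\pi}$ and then restoring the arithmetic factor $a(k)$ by comparison with the known and previously conjectured cases (the hybrid-model route you sketch is the alternative heuristic the paper cites via \cite{GHK07}). Since no rigorous proof exists or is claimed, your honest framing of it as a recipe rather than a verification matches the paper's treatment.
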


To motivate our conjectural results for the zeta function, we prove the following result for the characteristic polynomial. First we need to define $h_m(\alpha_1,\dots,\alpha_k)$ to be the complete homogeneous symmetric polynomial of degree $m$ in $k$ variables, namely
\begin{equation}\label{eq:homoSympoly}
h_m(\alpha_1,\dots,\alpha_k) = \sum_{\substack{n_1 \geq 0,\dots,n_k \geq 0 \\ n_1 + \dots + n_k = m}} \alpha_1^{n_1} \alpha_2^{n_2} \dots \alpha_k^{n_{k}}.
\end{equation}

\begin{theorem}\label{thm:CharPoly}
Let $Z_U(\theta)$, given by \eqref{eq:CharPoly}, be the characteristic polynomial of a unitary matrix with eigenvalues $e^{i\theta_1},\dots,e^{i\theta_N}$. Then
\begin{multline*}
\E_N\left[\frac{1}{N} \sum_{n=1}^N Z(\theta_n+\alpha_1) \dots Z(\theta_n+\alpha_k)\right] = \frac{1}{N}\prod_{r=1}^k \left(1-e^{-i\alpha_r}\right) h_{N-1}\left(e^{-i\alpha_1} , \dots, e^{-i\alpha_k} , 1,1 \right)\\
= \frac{1}{N} \left(\sum_{r=1}^{k} \frac{e^{-i(N+k)\alpha_r} }{1-e^{-i\alpha_r}} \prod_{\substack{j=1 \\ j \neq r}}^{k} \frac{1-e^{-i\alpha_j}}{(e^{-i\alpha_r} - e^{-i\alpha_j})} + N+k - \sum_{\ell=1}^k \frac{1}{1-e^{-i\alpha_\ell}} \right)
\end{multline*}
where $h_{N-1}\left(e^{-i\alpha_1} , \dots, e^{-i\alpha_k} , 1,1 \right)$ is the complete homogenous symmetric polynomial of degree $N-1$ in $k+2$ variables.
\end{theorem}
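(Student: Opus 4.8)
The plan is to compute the expectation directly using the explicit product formula for $Z(\theta)$ and the known combinatorial structure of Haar averages over $U(N)$. First I would write
\[
Z(\theta_n+\alpha_1)\cdots Z(\theta_n+\alpha_k) = \prod_{r=1}^k \det\!\left(I - U e^{i(\theta_n+\alpha_r)}\right),
\]
but since $\theta_n$ is an eigenangle of $U$, the factor $1 - e^{-i\theta_n}e^{i(\theta_n+\alpha_r)} = 1 - e^{i\alpha_r}$ appears in each $Z(\theta_n+\alpha_r)$. Thus, after pulling out $\prod_{r=1}^k (1-e^{i\alpha_r})$ (up to a relabelling $\alpha_r \mapsto -\alpha_r$ to match the stated normalization), what remains is a product over the \emph{other} $N-1$ eigenangles. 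The cleanest route is to use the standard identity for averaging a product of characteristic polynomials against the eigenvalue measure; equivalently, one recognizes that $\frac1N\sum_{n=1}^N Z(\theta_n+\alpha_1)\cdots Z(\theta_n+\alpha_k)$, after the extraction, is an average of a ratio-type object which by the Weyl integration formula and the Cauchy/Binet identity evaluates to a Toeplitz-type (or Schur-function) quantity. I expect this to collapse to $\frac{1}{N}\prod_{r=1}^k(1-e^{-i\alpha_r})\, h_{N-1}(e^{-i\alpha_1},\dots,e^{-i\alpha_k},1,1)$: the two extra $1$'s arise because each omitted diagonal term contributes a factor corresponding to evaluating at the variable $1$ (with multiplicity two, one from the eigenvalue constraint and one from the $\det(I-Ue^{i\theta})$ structure at $\theta_n$ itself).

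For the first equality, the key steps in order are: (i) expand $Z(\theta_n+\alpha_r)=\prod_{m=1}^N(1-e^{i(\theta_n - \theta_m + \alpha_r)})$ and isolate the $m=n$ term; (ii) take $\E_N$ using the Weyl integration formula, writing the average of the remaining product over $\theta_1,\dots,\theta_N$ as a contour integral / Toeplitz determinant; (iii) recognize the result as a Schur polynomial for a rectangular-type partition, which for this particular shape reduces to a single complete homogeneous symmetric polynomial $h_{N-1}$ in the $k+2$ variables $e^{-i\alpha_1},\dots,e^{-i\alpha_k},1,1$. Step (iii) is where the preliminary lemmas of Section~\ref{Prelim} relating Vandermonde determinants to complete homogeneous symmetric polynomials do the work.

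For the second equality — the closed form as a sum of residue-like terms plus $N+k-\sum_\ell (1-e^{-i\alpha_\ell})^{-1}$ — I would apply the partial-fraction / interpolation expansion of $h_{N-1}$ in distinct variables. Writing $x_r = e^{-i\alpha_r}$ and using the generating-function identity $\sum_{m\ge 0} h_m(x_1,\dots,x_{k+2}) z^m = \prod_j (1-x_j z)^{-1}$ together with the Lagrange-interpolation expansion of $\prod_{j=1}^k \frac{1}{1-x_j z}$ in partial fractions, one extracts the coefficient of $z^{N-1}$; the repeated variable $1$ (appearing twice) produces, via a derivative in the partial-fraction decomposition, exactly the linear-in-$N$ term $N+k$ and the correction $-\sum_\ell (1-x_\ell)^{-1}$, while each simple pole at $z = 1/x_r$ yields the term $\frac{x_r^{N+k}}{1-x_r}\prod_{j\neq r}\frac{1-x_j}{x_r - x_j}$ after multiplying through by $\prod_r(1-x_r)$.

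The main obstacle I anticipate is step (ii)–(iii): correctly identifying which Schur function / Toeplitz determinant arises from the Haar average of the product of $N-1$ factors (the ``diagonal restriction'' $m=n$ changes the symmetry class of the integral), and in particular pinning down \emph{why} the variable $1$ enters with multiplicity exactly two rather than one. I would resolve this by first checking the cases $k=1$ and $k=2$ by hand against Theorem~\ref{thm:CharPoly}'s claimed formula, and then invoking the Vandermonde/homogeneous-symmetric-polynomial lemmas of Section~\ref{Prelim} to handle general $k$; the bookkeeping of the shift $e^{-i(N+k)\alpha_r}$ in the numerator is then a routine consequence of tracking the degree in each $x_r$ through the interpolation formula.
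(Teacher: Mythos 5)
Your overall route coincides with the paper's: extract the diagonal ($m=n$) factors, average what remains via the Weyl integration formula, recognise a Toeplitz determinant (Heine's identity), identify it with a complete homogeneous symmetric polynomial in $k+2$ variables, and then obtain the closed form by a partial-fraction/limit argument. Your second step is fine: extracting the coefficient of $z^{N-1}$ from $\prod_{j=1}^{k}(1-x_jz)^{-1}(1-z)^{-2}$ by partial fractions, with the double pole at $z=1$ handled by a derivative, is correct and reproduces exactly the $e^{-i(N+k)\alpha_r}$ exponents and the $N+k-\sum_{\ell}(1-e^{-i\alpha_\ell})^{-1}$ term; it is a legitimate (and slightly cleaner) substitute for the paper's derivation, which solves a banded-Toeplitz recurrence and then lets $A_{k+1}\to A_{k+2}\to 1$ via L'H\^{o}pital.

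The genuine gap is in the first equality, exactly at the point you flag as the main obstacle. Your explanation of the multiplicity-two appearance of the variable $1$ is wrong: the $m=n$ factor of $Z(\theta_n+\alpha_r)$ is precisely $(1-e^{-i\alpha_r})$, i.e.\ the prefactor you have already pulled out, and it contributes nothing further. Both copies of $1$ come instead from the Weyl density \eqref{eq:Weyl}: after using rotation invariance to single out $\theta_N$ and changing variables $\phi_m=\theta_m-\theta_N$, each factor $|e^{i\theta_m}-e^{i\theta_N}|^2$ becomes $|e^{i\phi_m}-1|^2=-e^{-i\phi_m}(1-e^{i\phi_m})^2$, so the symbol of the resulting $(N-1)$-dimensional Toeplitz determinant is $(1-e^{i\phi})^2\prod_{r}(1-e^{i\phi}e^{-i\alpha_r})$ up to the factor $-e^{-i\phi}$, a product of $k+2$ linear factors with the value $1$ taken twice (this is \eqref{eq:ShiftedCharPolyInTermsOfDN} with \eqref{eq:f}). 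Moreover, your step (ii)--(iii) as written is a guess to be verified on $k=1,2$ rather than a proof: you still need an argument that this Toeplitz determinant equals $h_{N-1}(e^{-i\alpha_1},\dots,e^{-i\alpha_k},1,1)$. The paper supplies one by noting the symbol has Fourier modes only in the range $-1$ to $k+1$, so $D_N[f]$ satisfies a length-$(k+2)$ linear recurrence whose characteristic roots are the $k+2$ variables, and the initial conditions are matched using Lemma~\ref{lem:vandermonde}, giving \eqref{eq:DtildeCompleteSymPoly}; you could instead quote a Schur-function evaluation of such averages, but some explicit identification of this kind must be made for the first equality to stand.
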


The second exact expression is amenable to asymptotic analysis when $N$ is large, although it involves some combinatorial analysis to show the singularities cancel. Furthermore, one can Taylor expand to find expressions for the derivative of the characteristic polynomial, evaluated at an eigenvalue. Such expressions include all the terms, however, one can extract a particularly simple expression for the leading order behaviour of the discrete moments of the derivative.

Specifically, in Section~\ref{sect:RMTMot} we will prove the following two results.
\begin{theorem}\label{thm:CharPolyShiftsLeadingOrder}
As $N \to \infty$ uniformly for bounded $a_1,\dots,a_k$, we have
\[
\lim_{N \to \infty} \E_N\left[ \prod_{r=1}^k Z\left(\theta_N+\frac{a_r}{N}\right) \right]
=   \sum_{m=0}^\infty \frac{(-1)^{m} i^{k+m}}{(m+k+1)!} h_{m}(a_1,\dots,a_k)   \prod_{j=1}^k a_j
\]
where $h_m(\alpha_1,\dots,\alpha_k)$ is given by \eqref{eq:homoSympoly}.
\end{theorem}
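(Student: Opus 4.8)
The plan is to deduce the stated limit from the exact identity of Theorem~\ref{thm:CharPoly} by putting $\alpha_r=a_r/N$ and sending $N\to\infty$. The first reduction is that, by the rotational invariance of Haar measure on the unitary group, the expectation $\E_N\bigl[\prod_{r=1}^k Z(\theta_n+\alpha_r)\bigr]$ does not depend on the choice of eigenvalue index $n$; averaging over $n$ therefore shows it equals $\E_N\bigl[\tfrac1N\sum_{n=1}^N\prod_{r=1}^k Z(\theta_n+\alpha_r)\bigr]$, which is precisely the quantity evaluated in Theorem~\ref{thm:CharPoly}. It thus suffices to extract the large-$N$ asymptotics of the second closed form there, with each $\alpha_r$ set equal to $a_r/N$.

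Using $1-e^{-ia_r/N}=\tfrac{ia_r}{N}+O(N^{-2})$ and $e^{-ia_r/N}-e^{-ia_j/N}=\tfrac{i(a_j-a_r)}{N}+O(N^{-2})$, uniformly for bounded shifts, one reads off
\[
\frac1N\Bigl(N+k-\sum_{\ell=1}^k\frac{1}{1-e^{-ia_\ell/N}}\Bigr)\longrightarrow 1+i\sum_{\ell=1}^k\frac{1}{a_\ell},
\]
and, for each $r$, $e^{-i(N+k)a_r/N}\to e^{-ia_r}$ together with
\[
\frac1N\cdot\frac{1}{1-e^{-ia_r/N}}\prod_{\substack{j=1\\ j\neq r}}^k\frac{1-e^{-ia_j/N}}{e^{-ia_r/N}-e^{-ia_j/N}}\longrightarrow -\frac{i}{a_r}\prod_{\substack{j=1\\ j\neq r}}^k\frac{a_j}{a_j-a_r}.
\]
Hence, whenever the $a_j$ are distinct and nonzero, the quantity in the theorem converges to
\[
1+i\sum_{\ell=1}^k\frac{1}{a_\ell}-i\sum_{r=1}^k\frac{e^{-ia_r}}{a_r}\prod_{\substack{j=1\\ j\neq r}}^k\frac{a_j}{a_j-a_r}.
\]

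It then remains to identify this closed form with the claimed series. I would factor out $\prod_j a_j$ using $\prod_{j\neq r}\tfrac{a_j}{a_j-a_r}=\tfrac{(-1)^{k-1}\prod_j a_j}{a_r\prod_{j\neq r}(a_r-a_j)}$, expand $e^{-ia_r}=\sum_{\nu\geq0}\tfrac{(-i)^\nu}{\nu!}a_r^\nu$, interchange the summations, and invoke the divided-difference identities of Section~\ref{Prelim}: the sum $\sum_r a_r^{\nu}/\prod_{j\neq r}(a_r-a_j)$ equals $h_{\nu-k+1}(a_1,\dots,a_k)$ for $\nu\geq k-1$ and vanishes for $0\leq\nu\leq k-2$, while $\sum_r a_r^{-\mu}/\prod_{j\neq r}(a_r-a_j)=(-1)^{k-1}(a_1\cdots a_k)^{-1}h_{\mu-1}(a_1^{-1},\dots,a_k^{-1})$ for $\mu\geq1$. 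The $\nu=0$ and $\nu=1$ terms then produce $-i\sum_\ell\tfrac1{a_\ell}$ and $-1$ respectively, which exactly cancel the $1+i\sum_\ell\tfrac1{a_\ell}$ already present; the terms $2\leq\nu\leq k$ vanish; and re-indexing the remaining tail by $m=\nu-k-1$, together with the identity $-i(-1)^{k-1}(-i)^{m+k+1}=(-1)^m i^{m+k}$, leaves precisely $\prod_j a_j\sum_{m\geq0}\tfrac{(-1)^m i^{m+k}}{(m+k+1)!}h_m(a_1,\dots,a_k)$. For the uniformity over all bounded $(a_1,\dots,a_k)$ — including the diagonals $a_i=a_j$ and the hyperplanes $a_i=0$, where the intermediate formula has removable singularities — I would return to the first closed form of Theorem~\ref{thm:CharPoly}: since $h_{N-1}$ is a polynomial of degree $N-1$ in $k+2$ variables all within $O(N^{-1})$ of $1$, one has $|h_{N-1}|\ll N^{k+1}$ while $\bigl|\prod_r(1-e^{-ia_r/N})\bigr|\ll N^{-k}$ on any compact set, so the pre-limit functions are entire in the $a_j$ and uniformly bounded on compacta; Vitali's theorem then upgrades the pointwise convergence just established on a dense open set to locally uniform convergence everywhere, and the limit, being continuous and agreeing with the entire series on a dense set, equals it throughout.

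The step I expect to be the crux is the bookkeeping in the third paragraph: correctly propagating the powers of $i$ and $(-1)$ and the index shifts in the $h_m$, and in particular verifying that the ``boundary'' contributions $1+i\sum_\ell 1/a_\ell$ — which come from the order-two vanishing near the repeated variables $1,1$ inside $h_{N-1}$ — are annihilated exactly by the $\nu\in\{0,1\}$ parts of the expansion of $e^{-ia_r}$. A possibly cleaner alternative, with automatically uniform estimates, is to work directly from the first closed form, write $h_{N-1}(x_1,\dots,x_{k+2})=\frac{1}{2\pi i}\oint z^{-N}\prod_i(1-x_iz)^{-1}\,dz$, deform the contour outward past the cluster of simple and double poles that collapse to $z=1$ as $N\to\infty$, and sum the residues; the series emerges from essentially a single residue computation, at the cost of justifying the deformation and the interchange of limit and integral.
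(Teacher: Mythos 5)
Your proposal is correct and follows essentially the same route as the paper: set $\alpha_r=a_r/N$ in the exact formula of Theorem~\ref{thm:CharPoly} (Corollary~\ref{cor:shiftedMmtsExact}), pass to the limit, expand each $e^{-ia_r}$ as a power series, and use the divided-difference identities of Lemma~\ref{LemmaProdSum} so that the $\nu=0,1$ terms cancel $1+i\sum_\ell a_\ell^{-1}$, the terms $2\le\nu\le k$ vanish, and the tail re-indexes to the stated series, with all signs and powers of $i$ checking out. The only difference is your Vitali/boundedness argument handling the removable singularities at coincident or vanishing $a_j$ and the claimed uniformity, a point the paper's proof leaves implicit.
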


\begin{theorem}\label{thm:CharPolyDeriv}
    Let $Z_U(\theta)$ be given by \eqref{eq:CharPoly}, and the $e^{i\theta_n}$ be the eigenvalues of the matrix $U$. For $n_1,\dots,n_k \in \mathbb{N}$, we have
    \begin{equation*}
    \E_N\left[ \frac{1}{N} \sum_{n=1}^N Z^{(n_1)}(\theta_n)\dots Z^{(n_k)}(\theta_n)\right]
    \sim (-1)^{n_1+\dots+n_k-k} i^{n_1+\dots+n_k} \frac{n_1! \dots n_k!}{(n_1+\dots+n_k+1)!} N^{n_1+\dots+n_k}
    \end{equation*}
    as $N\to\infty$, where $Z^{(n)}(\theta)$ denotes the $n$\textsuperscript{th} derivative of $Z(\theta)$.
\end{theorem}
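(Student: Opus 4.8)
The plan is to deduce Theorem~\ref{thm:CharPolyDeriv} from Theorem~\ref{thm:CharPolyShiftsLeadingOrder} by extracting the appropriate Taylor coefficients. Starting from the scaled shifts $a_r/N$, observe that
\[
\E_N\left[\prod_{r=1}^k Z\!\left(\theta_N+\tfrac{a_r}{N}\right)\right]
= \sum_{n_1,\dots,n_k\ge 0} \frac{a_1^{n_1}\cdots a_k^{n_k}}{n_1!\cdots n_k!\,N^{n_1+\dots+n_k}}\,\E_N\!\left[\prod_{r=1}^k Z^{(n_r)}(\theta_N)\right],
\]
so that the coefficient of $a_1^{n_1}\cdots a_k^{n_k}$ on the left, multiplied by $n_1!\cdots n_k!\,N^{n_1+\dots+n_k}$, reproduces the mixed-derivative moment (with $\theta_N$ replaced by a generic eigenvalue $\theta_n$ after averaging, since the distribution is rotation-invariant and the $1/N$ sum over $n$ simply restores the normalisation). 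First I would make rigorous the interchange of the limit $N\to\infty$ with this Taylor extraction: Theorem~\ref{thm:CharPolyShiftsLeadingOrder} asserts uniform convergence for bounded $a_1,\dots,a_k$, and since both sides are entire in the $a_r$, uniform convergence on compacta transfers to convergence of all Taylor coefficients (by Cauchy's integral formula applied to each variable in turn on a fixed polydisc). Hence
\[
\lim_{N\to\infty}\frac{1}{N^{n_1+\dots+n_k}}\,\E_N\!\left[\frac1N\sum_{n=1}^N\prod_{r=1}^k Z^{(n_r)}(\theta_n)\right]
= n_1!\cdots n_k!\,[a_1^{n_1}\cdots a_k^{n_k}]\,\sum_{m=0}^\infty \frac{(-1)^m i^{k+m}}{(m+k+1)!}\,h_m(a_1,\dots,a_k)\prod_{j=1}^k a_j.
\]

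Next I would compute this coefficient. Because of the factor $\prod_{j=1}^k a_j$, extracting $a_1^{n_1}\cdots a_k^{n_k}$ from the whole expression is the same as extracting $a_1^{n_1-1}\cdots a_k^{n_k-1}$ from $\sum_m \frac{(-1)^m i^{k+m}}{(m+k+1)!}h_m(a_1,\dots,a_k)$, which forces $m=(n_1-1)+\dots+(n_k-1)=n_1+\dots+n_k-k$ (this also shows the $n_r\ge 1$ hypothesis is exactly what is needed). By the definition \eqref{eq:homoSympoly}, the coefficient of $a_1^{n_1-1}\cdots a_k^{n_k-1}$ in $h_m(a_1,\dots,a_k)$ is $1$ for this value of $m$. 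Therefore the limit equals
\[
n_1!\cdots n_k!\cdot\frac{(-1)^{n_1+\dots+n_k-k}\, i^{k+(n_1+\dots+n_k-k)}}{(n_1+\dots+n_k-k+k+1)!}
= (-1)^{n_1+\dots+n_k-k}\,i^{n_1+\dots+n_k}\,\frac{n_1!\cdots n_k!}{(n_1+\dots+n_k+1)!},
\]
which is precisely the claimed asymptotic.

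I expect the main obstacle to be the justification of the coefficient extraction: one must be sure that the uniform convergence in Theorem~\ref{thm:CharPolyShiftsLeadingOrder} is genuinely over a region with nonempty interior in $\mathbb{C}^k$ (or can be complexified), so that Cauchy's formula applies and one is not merely extracting a limit of real Taylor data without control. If the cited theorem is stated only for real $a_r$, a short supplementary argument is needed: the finite-$N$ expectation is a polynomial (entire) function of each $a_r$, the bound implicit in the uniform estimate is locally uniform, and an application of Vitali's theorem (or Montel plus identification of the limit on the real axis) upgrades the convergence to locally uniform convergence on $\mathbb{C}^k$, after which term-by-term comparison of Taylor coefficients is immediate. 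Everything else — the rotation-invariance that lets us replace $\theta_N$ by the averaged eigenvalue sum, and the purely combinatorial coefficient computation using \eqref{eq:homoSympoly} — is routine.
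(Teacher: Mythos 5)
Your proposal is correct and follows essentially the same route as the paper: the paper also deduces Theorem~\ref{thm:CharPolyDeriv} by extracting the coefficient of $a_1^{n_1}\cdots a_k^{n_k}$ from both sides of the asymptotic in Theorem~\ref{thm:CharPolyShiftsLeadingOrder}, identifying the single surviving term $m=n_1+\dots+n_k-k$ in the homogeneous symmetric polynomial expansion. Your additional care in justifying the interchange of the limit with the Taylor-coefficient extraction (via Cauchy's formula/Vitali) is a refinement the paper passes over silently, but it does not change the argument.
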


\section{The Ratios Conjecture}\label{sect:RatHis}

In keeping with the theme of this paper, we could ask if it is possible to extend the ideas of Keating and Snaith \cite{KS00a} to obtain conjectures for the full asymptotic of the moments of the zeta function. This problem was resolved conjecturally for integer moments in the `recipe' of Conrey, Farmer, Keating, Rubinstein and Snaith \cite{CFKRS05}.
	\begin{conjecture}[Conrey--Farmer--Keating--Rubinstein--Snaith]
		For $k$ a fixed integer,
		\begin{equation*}
			I_{2k} (T) = \int_{1}^{T} \left| \zeta \left( \frac{1}{2} + it \right) \right|^{2k} \ dt = \int_{1}^{T} \mathcal{P}_k \left(\log \frac{t}{2\pi} \right) \ dt + O\left(T^{1/2 + \varepsilon} \right)
		\end{equation*}
		as $T \rightarrow \infty$, where $\mathcal{P}_k (x)$ is a calculable polynomial of degree $k^2$ in $x$.
	\end{conjecture}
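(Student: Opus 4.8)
The statement is the Conrey--Farmer--Keating--Rubinstein--Snaith conjecture, so what one can honestly offer is the heuristic \emph{recipe} derivation, with the understanding that the substantive gap is making it rigorous. The plan is to study a \emph{shifted} moment and then let the shifts collapse to zero. First I would introduce parameters $\alpha_1,\dots,\alpha_k,\beta_1,\dots,\beta_k$ and consider
\[
M(\alpha;\beta;T) = \int_1^T \prod_{i=1}^k \zeta\left(\tfrac12+\alpha_i+it\right)\prod_{j=1}^k \zeta\left(\tfrac12+\beta_j-it\right)\,dt,
\]
which reduces to $I_{2k}(T)$ when every shift is $0$. The first step is to replace each of the $2k$ zeta values by its approximate functional equation, $\zeta(\tfrac12+s)\approx \sum_{n} n^{-1/2-s} + \chi(\tfrac12+s)\sum_n n^{-1/2+s}$ with $\chi(\tfrac12+s)\approx (t/2\pi)^{-s}$ on the critical line, and then multiply everything out into $2^{2k}$ pieces, each a power $(t/2\pi)^{c}$ times a Dirichlet series in $2k$ summation variables.

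Next I would average over $t\in[1,T]$. Orthogonality of $t\mapsto (m/n)^{it}$ kills every piece except the ``diagonal'' ones: a piece survives only when the swapped-shift exponent $c$ is matched by the constraint that the product of numerator summation variables equal the product of denominator ones, which forces equally many $\alpha$-factors and $\beta$-factors to have been replaced by their $\chi$-parts. Each surviving piece, after summing the (absolutely convergent, away from the shift hyperplanes) resulting Euler product, contributes a term
\[
\left(\frac{t}{2\pi}\right)^{-\sum_{i\in S}\alpha_i-\sum_{j\in \bar S}\beta_j}\ \prod_{i=1}^k\prod_{j=1}^k \zeta\left(1+\hat\alpha_i+\hat\beta_j\right)\, A_k(\hat\alpha;\hat\beta),
\]
where $S,\bar S\subseteq\{1,\dots,k\}$ with $|S|=|\bar S|$ index which shifts are swapped, $\hat\alpha,\hat\beta$ are the shifts after the corresponding sign flips, and $A_k$ is a fixed Euler product. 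Summing over all $\binom{2k}{k}$ swaps gives the conjectural asymptotic for $M(\alpha;\beta;T)$.

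The final step is the limit $\alpha,\beta\to 0$. Each individual swap term has poles, from the $k^2$ factors $\zeta(1+\hat\alpha_i+\hat\beta_j)\sim(\hat\alpha_i+\hat\beta_j)^{-1}$, but the full permutation sum is holomorphic; one sees this by rewriting it as a multiple contour integral of the type in Lemma~6.7 of \cite{CFZ07}, and evaluating the limit by residues produces a polynomial in $\log(t/2\pi)$. A degree count --- $k^2$ simple poles competing among the $\binom{2k}{k}$ swap terms, organised by the swap combinatorics --- shows the polynomial has degree exactly $k^2$; integrating in $t$ over $[1,T]$ then yields the stated form with $\mathcal P_k$ of degree $k^2$ and the advertised error term.

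The main obstacle --- indeed the reason this is still only a conjecture --- is the averaging step: there is no known argument that the off-diagonal pieces of the approximate functional equation, together with the arithmetic remainder hidden inside the diagonal, genuinely contribute only $O(T^{1/2+\varepsilon})$. The resulting asymptotic is a theorem for $k\le 2$ (classically for $k=1$, and for $k=2$ with the full polynomial due to Heath--Brown and, exactly, Motohashi), but already for $k\ge 3$ the conjecture is open; even the order of magnitude $T(\log T)^{k^2}$ is only partly understood (a matching lower bound is known unconditionally, an upper bound under the Riemann Hypothesis), and a rigorous proof would have to go well beyond the recipe. A lesser technical point is confirming that the $\alpha,\beta\to 0$ limit is well defined and that the degree of $\mathcal P_k$ does not drop below $k^2$, which is exactly where the holomorphy of the permutation sum is used.
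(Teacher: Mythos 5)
This statement is not proved in the paper at all: it is quoted verbatim as the CFKRS conjecture, with the paper only noting (Section~\ref{sect:RatHis}) that it was created via the recipe by comparison with characteristic-polynomial averages. Your sketch is a faithful account of that same recipe heuristic (approximate functional equation, diagonal/swap terms, holomorphy of the permutation sum as in Lemma~6.7 of \cite{CFZ07}, degree $k^2$), and you correctly identify the discarded off-diagonal contributions as the reason it remains a conjecture, so there is nothing to fault beyond what is inherently unproved.
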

	
The recipe was created through comparison with the analogous quantities for the characteristic polynomials of matrices averaged over classical compact groups \cite{CFKRS03}.

Rather than asking if we can calculate a product of zeta functions as in the recipe, we could also ask whether we can calculate a moment of a quotient of products of zeta functions. Farmer~\cite{Far93} conjectured that we can do exactly this in the case of two zeta functions on the numerator and two zeta functions on the denominator.
\begin{conjecture}[Farmer]
For complex shifts $\alpha, \beta, \gamma, \delta \asymp 1/\log T$ with small positive real parts and with $s=1/2+it$,
\[
\frac{1}{T} \int_{1}^{T} \frac{\zeta ( s + \alpha) \zeta ( s + \beta)}{\zeta ( s + \gamma) \zeta ( s + \delta)} \ dt \sim \frac{(\alpha + \delta)(\beta + \gamma)}{(\alpha + \beta)(\gamma + \delta)} - T^{-(\alpha + \beta)} \frac{(\alpha - \gamma)(\beta - \delta)}{(\alpha + \beta)(\gamma + \delta)}
\]
as $T \rightarrow \infty$.
\end{conjecture}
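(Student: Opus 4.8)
The plan is to derive the conjecture by the recipe of Conrey, Farmer and Zirnbauer \cite{CFZ05}, the same machinery behind Conjecture~\ref{RatioMom}, applied here to a genuine quotient of zeta values rather than a product at the zeros. Write $s=\tfrac12+it$ and let $\chi$ be the factor in the functional equation $\zeta(s)=\chi(s)\zeta(1-s)$, so that $\chi(\tfrac12+it+\mu)\sim(t/2\pi)^{-\mu}$ for bounded $\mu$. The first step is to replace each numerator factor by the two pieces of its approximate functional equation,
\[
\zeta(s+\mu)\;\approx\;\sum_{n}\frac{1}{n^{s+\mu}}\;+\;\chi(s+\mu)\sum_{n}\frac{1}{n^{1-s-\mu}},
\]
and each denominator factor by the Dirichlet series $1/\zeta(s+\nu)=\sum_n\mu(n)n^{-s-\nu}$. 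Multiplying everything out writes the integrand as a sum of terms indexed by the subset of $\{\alpha,\beta\}$ whose numerator zeta has been sent to its dual piece; each term carries a product of $\chi$-factors and a product of Dirichlet series.

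Next I would integrate in $t$ over $[1,T]$ and, following the recipe, keep only the diagonal contributions, discarding the off-diagonal terms together with those terms for which the associated stationary phase falls outside the range of the approximate functional equation, all of which go into a lower-order error (conjecturally $O(T^{-1/2+\varepsilon})$ after dividing by $T$). Two families survive. The no-swap family contributes the diagonal of $\bigl(\zeta(s+\alpha)\cdots\bigr)\bigl(1/\zeta(s+\gamma)\cdots\bigr)$; this diagonal is a multiplicative sum, so it factors as an Euler product, and it evaluates to
\[
\frac{\zeta(1+\alpha+\beta)\,\zeta(1+\gamma+\delta)}{\zeta(1+\alpha+\delta)\,\zeta(1+\beta+\gamma)}\;A_1(\alpha,\beta,\gamma,\delta),
\]
where $A_1$ is an absolutely convergent Euler product, holomorphic near the origin, with $A_1\to1$ as all the shifts tend to $0$. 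The one remaining (double) swap, in which the functional equation is applied to both numerator factors, carries a product of two $\chi$-factors that combines to $(t/2\pi)^{-(\alpha+\beta)}$, whose $t$-average is $\sim(2\pi)^{\alpha+\beta}T^{-(\alpha+\beta)}/(1-\alpha-\beta)$, and its accompanying diagonal evaluates to
\[
\frac{\zeta(1-\alpha-\beta)\,\zeta(1+\gamma+\delta)}{\zeta(1-\alpha+\gamma)\,\zeta(1-\beta+\delta)}\;A_2(\alpha,\beta,\gamma,\delta),
\]
again with $A_2\to1$. The single-swap terms, in which exactly one of $\zeta(s+\alpha),\zeta(s+\beta)$ is dualised, are precisely the ones killed by the stationary-phase test; as a consistency check one verifies that the sum of the two surviving families is holomorphic at $\alpha=-\beta$, the apparent poles cancelling, exactly as in the remark on Conjecture~\ref{RatioMom}.

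Finally I would let the shifts shrink at rate $\asymp1/\log T$ and simplify. Using $\zeta(1+x)=x^{-1}+O(1)$ and the fact that $A_1$, $A_2$ and the elementary prefactor $(2\pi)^{\alpha+\beta}/(1-\alpha-\beta)$ all tend to $1$, the no-swap family tends to $\dfrac{(\alpha+\delta)(\beta+\gamma)}{(\alpha+\beta)(\gamma+\delta)}$ and the double-swap family tends to $-\,T^{-(\alpha+\beta)}\dfrac{(\alpha-\gamma)(\beta-\delta)}{(\alpha+\beta)(\gamma+\delta)}$, the minus sign arising because $\zeta(1-\alpha-\beta)$ is negative just inside its pole; summing the two gives the stated expression.

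The hard part is not the bookkeeping above but the step where the off-diagonal and single-swap contributions are thrown away: making this rigorous amounts to an asymptotic evaluation of an off-diagonal term of shifted-convolution type built from M\"obius-twisted divisor functions, uniformly for shifts of size $1/\log T$, which is out of reach of current methods; indeed even a two-over-two ratio such as this one remains open, with only lower-complexity cases (for instance the one-over-one ratio, via the explicit formula and moments of $\zeta'/\zeta$) having been established unconditionally. A smaller but genuine point of care is the conjugation convention in the functional equation: to obtain non-empty diagonals and the $(t/2\pi)^{-(\alpha+\beta)}$ in the swap term one must read the $\beta$- and $\delta$-shifts against $\tfrac12-it$, as in the standard ratios setup.
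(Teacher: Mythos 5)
There is nothing in the paper to compare your argument against: the statement is Farmer's conjecture, quoted from \cite{Far93} purely as historical motivation for the Ratios Conjecture, and the paper gives no proof or derivation of it (it remains open). Judged as a heuristic derivation, your proposal is the standard one and is correct in substance: it runs the Conrey--Farmer--Zirnbauer recipe exactly as the paper does for Conjecture~\ref{RatioMom} (Dirichlet series for the denominator zetas, approximate functional equation for the numerator zetas, retention only of terms with equal numbers of $\chi(s)$ and $\chi(1-s)$ factors, diagonal evaluation as an Euler product, extraction of the divergent $\zeta(1\pm\cdot)$ factors), and your two surviving families, after the expansions $\zeta(1+x)\sim 1/x$, $\zeta(1-x)\sim -1/x$ and the average of $(t/2\pi)^{-\alpha-\beta}$, do reproduce the stated right-hand side; what you call the ``double swap'' is the one-swap term in the CFZ labelling, a harmless naming difference. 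You are also right about the two caveats, and the second is worth emphasising: discarding the off-diagonal and unbalanced-$\chi$ contributions is exactly the unproved step, so this is a derivation of the conjecture rather than a proof; and the formula only emerges if the $\beta$ and $\delta$ shifts are attached to $1-s=\tfrac12-it$, as in Farmer's original formulation. With all four shifts attached to $s$, as the display is literally printed in the paper, every term oscillates except the trivial diagonal and the recipe would predict an average of $1$, so the statement should be read with $\zeta(1-s+\beta)$ and $\zeta(1-s+\delta)$; catching that convention is a genuine point in your favour.
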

The Ratios Conjecture \cite{CFZ05} generalises this to quotients of an arbitrary number of $L$-functions averaged over a family, with a full asymptotic expansion and a power-saving error term in each case. 

The Ratios Conjecture was developed through analogy with quotients of products of characteristic polynomials \cite{CFZ07}, exploiting the connection between moments in random matrix theory and moments of the zeta function.

For an integral containing quotients of products of zeta functions (or more generally $L$-functions), we perform the follow steps (ignoring all error terms throughout):
\begin{enumerate}
    \item Replace all zeta functions in the denominator by its Dirichlet series (which we know conditionally converges up to the critical line under the Riemann Hypothesis).
    \item Replace all zeta functions in the numerator by the approximate functional equation.
    \item Expand the new terms in the integral, keeping only those terms with an equal number of $\chi (s)$ and $\chi (1-s)$ terms. These are the terms that are not rapidly oscillating.
    \item In these remaining terms, keep only the diagonal terms.
    \item Complete the resulting sums, factoring out various zeta factors to make the resulting terms converge.
    \item A power-saving error term is then conjectured for the final result.
\end{enumerate}
A full example will be seen when we use the Ratios Conjecture to form Conjecture~\ref{RatioMom} in Section \ref{ProofConj3}.

Note that this methodology is the same as the recipe methodology, with the first step of replacing zeta functions in the denominator by their Dirichlet series being the only additional step.

The Ratios Conjecture can be used for many computations, for example as shown in Conrey and Snaith~\cite{ConSna07}. They showed how to calculate $n$-level correlations of zeros with lower order terms, averages of mollified $L$-functions, non-vanishing results for various families of $L$-functions, and most importantly for the analogous results in this paper, discrete moments of $|\zeta (s)|$. The strength of the Ratios Conjecture is that it can reproduce many known results in a simple way, as well as predicting results that are out of our reach of proving rigorously.

\section{Preliminary Lemmas on Vandermonde Determinants}\label{Prelim}

As part of our later calculations, a certain identity between Vandemonde determinants will prove useful.

Let
\begin{align*}
\Delta_k\left(\alpha_1,\dots,\alpha_k\right)
&\renewcommand{\arraystretch}{1.4} = \det \begin{pmatrix}
1 & \alpha_1 & \alpha_1^2 & \dots & \alpha_1^{k-1}  \\
1 & \alpha_2 & \alpha_2^2 & \dots & \alpha_2^{k-1}  \\
\vdots & \vdots & \vdots & \dots & \vdots \\
1 & \alpha_{k} & \alpha_{k}^2 & \dots & \alpha_{k}^{k-1}
\end{pmatrix} \\
&= \prod_{1 \leq i < j \leq k} \left(\alpha_j - \alpha_i \right)
\end{align*}
denote the Vandermonde determinant with $k$ parameters, and also let
\[
\Delta_{k-1}\left(\alpha_1,\dots,\alpha_k ; \alpha_\ell\right) = \prod_{\substack{1 \leq i < j \leq k \\ i \neq \ell \\ j \neq \ell}} \left(\alpha_j - \alpha_i \right)
\]
denote the Vandermonde determinant with the $\ell$\textsuperscript{th} element removed.

Recall from \eqref{eq:homoSympoly} that $h_m(\alpha_1,\dots,\alpha_k)$ denotes the complete homogeneous symmetric polynomial of degree $m$ in $k$ variables, namely
\[
h_m(\alpha_1,\dots,\alpha_k) = \sum_{\substack{n_1 \geq 0,\dots,n_k \geq 0 \\ n_1 + \dots + n_k = m}} \alpha_1^{n_1} \alpha_2^{n_2} \dots \alpha_k^{n_{k}}.
\]

\begin{lemma}\label{lem:vandermonde}
For $k\in \mathbb{N}$ and $n\in \mathbb{Z}$, if $\alpha_1,\dots,\alpha_k \neq 0$ then
\begin{multline*}
\sum_{\ell=1}^k (-1)^{k+\ell}   \alpha_\ell^{n-1}  \Delta_{k-1}\left(\alpha_1,\dots,\alpha_k ; \alpha_\ell\right)\\
=
\begin{cases}
\displaystyle \Delta_{k}\left(\alpha_1,\dots,\alpha_{k}\right) h_{n-k}\left(\alpha_1, \dots \alpha_k\right) & \text{ if } n \geq k \\
0 & \text{ if } 1\leq n \leq k-1 \\
\noalign{\vskip1ex}
\displaystyle (-1)^{k+1} \Delta_{k}\left(\alpha_1,\dots,\alpha_{k}\right)   \frac{ h_{|n|}\left(\alpha_1^{-1}, \dots \alpha_k^{-1}\right)}{\alpha_1 \alpha_2 \dots \alpha_k} & \text{ if } n \leq 0.
\end{cases}
\end{multline*}
\end{lemma}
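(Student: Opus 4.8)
The plan is to recognize the alternating sum on the left as a single $k \times k$ determinant obtained from the Vandermonde matrix by replacing its last column with the powers $\alpha_i^{n-1}$, and then to evaluate that determinant by relating it to Schur polynomials. Concretely, expanding along the last column,
\[
\det \begin{pmatrix} 1 & \alpha_1 & \cdots & \alpha_1^{k-2} & \alpha_1^{n-1} \\ \vdots & & & & \vdots \\ 1 & \alpha_k & \cdots & \alpha_k^{k-2} & \alpha_k^{n-1} \end{pmatrix}
= \sum_{\ell=1}^k (-1)^{k+\ell} \alpha_\ell^{n-1} \Delta_{k-1}(\alpha_1,\dots,\alpha_k;\alpha_\ell),
\]
since the minor obtained by deleting row $\ell$ and the last column is exactly the Vandermonde determinant in the remaining variables. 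So the left-hand side equals this modified determinant, call it $D_n$.

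Next I would divide $D_n$ by $\Delta_k(\alpha_1,\dots,\alpha_k)$. When $n \geq k$, the bialternant formula (Jacobi's definition of Schur functions) gives $D_n/\Delta_k = s_\lambda(\alpha_1,\dots,\alpha_k)$ where $\lambda$ is the partition with exponents $(0,1,\dots,k-2,n-1)$ sorted decreasingly minus the staircase $(0,1,\dots,k-1)$; this is the single-row partition $\lambda = (n-k)$, and $s_{(n-k)} = h_{n-k}$, giving the first case. When $1 \leq n \leq k-1$, the exponent $n-1$ already appears among $0,1,\dots,k-2$, so $D_n$ has two equal columns and vanishes, giving the second case. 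For $n \leq 0$, I would factor $\alpha_1^{-1}\cdots\alpha_k^{-1}$ (more precisely a suitable power) out of appropriate columns to reduce to a Vandermonde-type determinant in the $\alpha_i^{-1}$; alternatively, substitute $\beta_i = \alpha_i^{-1}$, note $\Delta_k(\alpha) = (-1)^{\binom{k}{2}} (\prod \alpha_i)^{-(k-1)} \Delta_k(\beta)$ up to bookkeeping, and apply the $n \geq k$ case (with a large shift of all exponents by a common amount, which only multiplies the determinant by a monomial) to the $\beta_i$. Tracking the sign $(-1)^{k+1}$ and the factor $(\alpha_1\cdots\alpha_k)^{-1}$ then yields the third case.

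Alternatively — and this may be cleaner for a self-contained writeup — I would prove all three cases at once by induction on $k$, expanding $D_n$ and $\Delta_k$ along a row and using the recurrences $h_m(\alpha_1,\dots,\alpha_k) = \sum_{j\geq 0}\alpha_k^j h_{m-j}(\alpha_1,\dots,\alpha_{k-1})$ and the standard Laplace-type splitting of the Vandermonde; for $n \leq 0$ one also needs the negative-index identity $h_{-m}$ being interpreted via $h_{|n|}(\alpha_i^{-1})$, which is most transparent from the generating function $\prod_i (1-\alpha_i x)^{-1} = \sum_m h_m x^m$. The main obstacle I anticipate is purely bookkeeping: getting the sign $(-1)^{k+\ell}$ conventions, the transposition sign $(-1)^{\binom{k}{2}}$ from reversing columns, and the power of $\alpha_1\cdots\alpha_k$ all consistent across the three ranges of $n$ — especially the boundary behaviour at $n=0$ and $n=k$, where one wants to check the formulas agree with the degenerate cases ($h_0 = 1$, and the vanishing range collapsing correctly). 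No single step is deep; the care is entirely in the combinatorial normalization.
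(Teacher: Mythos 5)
Your proposal is correct, and it shares the paper's starting point — recognizing the alternating sum as the $k\times k$ determinant whose last column is $\alpha_i^{n-1}$, and disposing of the range $1\leq n\leq k-1$ by noting a repeated column — but the evaluation in the remaining two cases proceeds differently. The paper stays entirely self-contained: it performs successive row subtractions, pulling out the factors $(\alpha_j-\alpha_\ell)$ whose product is $\Delta_k$, and observes that the reduced matrix is upper triangular with $1$'s on the diagonal, so the determinant collapses to the single corner entry $B_k$, which is read off directly as $h_{n-k}(\alpha_1,\dots,\alpha_k)$ when $n\geq k$ and as $h_{|n|}(\alpha_1^{-1},\dots,\alpha_k^{-1})/(\alpha_1\cdots\alpha_k)$ (with the sign $(-1)^{k+1}$) when $n\leq 0$; all three cases come out of one uniform manipulation. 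You instead invoke the bialternant formula, identifying $D_n/\Delta_k$ with $s_{(n-k)}=h_{n-k}$ for $n\geq k$, and handle $n\leq 0$ by the substitution $\beta_i=\alpha_i^{-1}$, factoring $\alpha_i^{k-2}$ out of each row and reapplying the first case; the sign bookkeeping you flag does close up, since the reversal and inversion contribute $(-1)^{\binom{k-1}{2}+\binom{k}{2}}=(-1)^{(k-1)^2}=(-1)^{k+1}$, matching the stated constant. Your route is shorter if one is willing to quote the Schur-function identity, while the paper's buys a proof from first principles and, incidentally, never divides by $\Delta_k$: since you do divide, you should add the one-line remark that both sides are Laurent polynomials in the $\alpha_i$, so the identity established for distinct nonzero values extends to the general case asserted in the lemma (coincident $\alpha_i$ are not excluded by its hypotheses).
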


\begin{proof}
Let
\begin{equation*}
X = \sum_{\ell=1}^{k} (-1)^{k+\ell} \alpha_\ell^{n-1} \Delta_{k-1}\left(\alpha_1,\dots,\alpha_k ; \alpha_\ell\right) .
\end{equation*}

Observe that $X$ can be expressed as a determinant,
\[
\renewcommand{\arraystretch}{1.4}
X = \det \begin{pmatrix}
1 & \alpha_1 & \alpha_1^2 & \dots & \alpha_1^{k-2} & \alpha_1^{n-1}  \\
1 & \alpha_2 & \alpha_2^2 & \dots & \alpha_2^{k-2} & \alpha_2^{n-1}  \\
\vdots & \vdots & \vdots & \dots & \vdots & \vdots\\
1 & \alpha_{k} & \alpha_{k}^2 & \dots & \alpha_{k}^{k-2}  & \alpha_k^{n-1}
\end{pmatrix}
\]
where the terms in the last column are replaced by $\alpha_j^{n-1}$ (in the standard Vandermonde these terms would be $\alpha_j^{k-1}$). Expanding the determinant down its last column, it is clear that it equals $X$.

Note that if $n\leq k-1$ then the last column will be equal to one of the other columns in the matrix, and thus the determinant will be zero.

To complete the proof of the lemma, we now evaluate $X$ in a  different manner, via row manipulations.

Subtract the first row from all subsequent rows. Pull out a factor of $(\alpha_j-\alpha_1)$ from the $j$\textsuperscript{th} row (for $j\geq 2$).

Then subtract the new second row from all subsequent rows, and pull out a factor of $(\alpha_j-\alpha_2)$ from the $j$\textsuperscript{th} row (for $j\geq 3$ this time).

Repeat this process all the way down to subtracting what remains in the the $k-1$\textsuperscript{st} row from what remains in the $k$\textsuperscript{th} row, and pulling out a factor of $(\alpha_k-\alpha_{k-1})$.

The terms that are factored out during this process are
\[
\prod_{\ell=1}^{k-1} \prod_{j=\ell+1}^k \left(\alpha_j-\alpha_\ell\right)  = \Delta_{k}\left(\alpha_1,\dots,\alpha_{k}\right)
\]
and the remaining matrix determinant is
\[
\det
\begin{pmatrix}
A_{1,1} & A_{1,2} & \dots & A_{1,k-1} & B_{1} \\
A_{2,1} & A_{2,2} & \dots & A_{2,k-1} & B_{2} \\
\vdots & \vdots & \vdots & \vdots & \vdots\\
A_{k,1} & A_{k,2} & \dots & A_{k,k-1} & B_{k} \\
\end{pmatrix}
\]
where
\[
A_{\ell,m} = \sum_{\substack{n_1,\dots,n_{\ell} \geq 0 \\ n_1 + \dots +n_\ell  = m-\ell}} \alpha_1^{n_1} \dots \alpha_{\ell}^{n_\ell}
\]
and where $B_\ell = A_{\ell,n}$ in the case $n \geq 1$, and in the case $n \leq 0$,
\[
B_{\ell} = (-1)^{\ell+1} \sum_{\substack{n_1,\dots,n_{\ell} \geq 0 \\ n_1 + \dots +n_\ell = |n|}} \alpha_1^{-n_1-1} \dots \alpha_{\ell}^{-n_\ell-1}.
\]

Crucially note that $A_{\ell,m}=0$ for $m<\ell$ and $A_{\ell,\ell}=1$. This makes the determinant trivial to evaluate - the matrix is upper triangular, and so the determinant will equal $B_k$.

That is, we have shown that
\[
X = \Delta_{k}\left(\alpha_1,\dots,\alpha_{k}\right) B_k
\]
and this is the right-hand of the statement of the lemma, since if $1 \leq n\leq k-1$ we have
\[
B_k = A_{k,n} = 0
\]
and if $n \geq k$ we have
\begin{align*}
B_k &= A_{k,n} = \sum_{\substack{n_1,\dots,n_{k} \geq 0 \\ n_1 + \dots +n_k  = n-k}} \alpha_1^{n_1} \dots \alpha_{k}^{n_k} \\
&= h_{n-k}\left(\alpha_1,\dots,\alpha_k\right)
\end{align*}
and if $n \leq 0$ we have
\begin{align*}
B_k &= (-1)^{k+1} \sum_{\substack{n_1,\dots,n_k \geq 0 \\ n_1 + \dots +n_k = |n|}} \alpha_1^{-n_1-1} \dots \alpha_{k}^{-n_k-1} \\
&= \frac{ h_{|n|}\left(\alpha_1^{-1}, \dots \alpha_k^{-1}\right)}{\alpha_1 \alpha_2 \dots \alpha_k}
\end{align*}
as required.
\end{proof}

If we assume $\alpha_1,\dots,\alpha_k \in \mathbb{C}$ are all distinct (so $\Delta(\alpha_1,\dots,\alpha_k)$ is non-zero) we can adapt this lemma into some useful combinatorial sums we will need in our subsequent arguments.

\begin{lemma}\label{LemmaProdSum}
For $k \in \mathbb{N}$, $n \in \mathbb{Z}$, and distinct nonzero $\alpha_1,\dots,\alpha_k \in \mathbb{C}$, we have
\begin{equation*}
\sum_{\ell=1}^{k} \alpha_\ell^n \prod_{ \substack{j=1 \\ j \neq \ell}}^{k} \frac{\alpha_j}{(\alpha_j - \alpha_\ell)} =
\begin{cases}
\displaystyle (-1)^{k+1} h_{n-k}\left(\alpha_1,\dots,\alpha_k\right) \prod_{j=1}^k \alpha_j & \text{ if } n \geq k\\
0 & \text{ if } 1 \leq n \leq k-1 \\
h_{|n|}\left(\alpha_1^{-1},\dots,\alpha_k^{-1}\right) & \text{ if } n \leq 0
\end{cases}
\end{equation*}
where $h_m(\alpha_1,\dots,\alpha_k)$ is the degree-$m$ homogenous symmetric polynomial in $k$ variables, given in \eqref{eq:homoSympoly}.
\end{lemma}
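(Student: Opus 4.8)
The plan is to deduce Lemma~\ref{LemmaProdSum} directly from Lemma~\ref{lem:vandermonde} by recognising that, for distinct nonzero $\alpha_j$, the rational function $\prod_{j\neq\ell}\alpha_j/(\alpha_j-\alpha_\ell)$ is, up to an explicit monomial factor and a sign depending on $\ell$, the ratio of the reduced Vandermonde $\Delta_{k-1}(\alpha_1,\dots,\alpha_k;\alpha_\ell)$ to the full Vandermonde $\Delta_k(\alpha_1,\dots,\alpha_k)$. Once this identification is made, the sum in Lemma~\ref{LemmaProdSum} becomes (a constant multiple of) the sum appearing on the left-hand side of Lemma~\ref{lem:vandermonde}, and the three cases follow immediately.

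First I would isolate the factors of $\Delta_k(\alpha_1,\dots,\alpha_k)=\prod_{i<j}(\alpha_j-\alpha_i)$ that involve the index $\ell$, namely $\prod_{i<\ell}(\alpha_\ell-\alpha_i)$ together with $\prod_{j>\ell}(\alpha_j-\alpha_\ell)$. Reversing the order of the $\ell-1$ differences $(\alpha_\ell-\alpha_i)$ with $i<\ell$ produces a sign $(-1)^{\ell-1}$, giving the factorisation
\[
\Delta_k(\alpha_1,\dots,\alpha_k) = (-1)^{\ell-1}\Big(\prod_{\substack{j=1\\ j\neq\ell}}^k(\alpha_j-\alpha_\ell)\Big)\,\Delta_{k-1}(\alpha_1,\dots,\alpha_k;\alpha_\ell).
\]
Since the $\alpha_j$ are distinct and nonzero, I can solve this for $1/\prod_{j\neq\ell}(\alpha_j-\alpha_\ell)$; combining with $\prod_{j\neq\ell}\alpha_j=(\prod_{j=1}^k\alpha_j)/\alpha_\ell$ yields
\[
\alpha_\ell^{\,n}\prod_{\substack{j=1\\ j\neq\ell}}^k\frac{\alpha_j}{\alpha_j-\alpha_\ell} = \Big(\prod_{j=1}^k\alpha_j\Big)\frac{(-1)^{\ell-1}\,\alpha_\ell^{\,n-1}\,\Delta_{k-1}(\alpha_1,\dots,\alpha_k;\alpha_\ell)}{\Delta_k(\alpha_1,\dots,\alpha_k)}.
\]

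Summing over $\ell$ and pulling the $\ell$-independent factor $(\prod_j\alpha_j)/\Delta_k$ outside, I rewrite the remaining sum in the form appearing in Lemma~\ref{lem:vandermonde} by absorbing the sign discrepancy $(-1)^{\ell-1}=(-1)^{k+1}(-1)^{k+\ell}$, obtaining
\[
\sum_{\ell=1}^k\alpha_\ell^{\,n}\prod_{\substack{j=1\\ j\neq\ell}}^k\frac{\alpha_j}{\alpha_j-\alpha_\ell} = \frac{(-1)^{k+1}}{\Delta_k(\alpha_1,\dots,\alpha_k)}\Big(\prod_{j=1}^k\alpha_j\Big)\sum_{\ell=1}^k(-1)^{k+\ell}\alpha_\ell^{\,n-1}\Delta_{k-1}(\alpha_1,\dots,\alpha_k;\alpha_\ell).
\]
Now I apply the three cases of Lemma~\ref{lem:vandermonde} to the inner sum. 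In each case the factor $\Delta_k$ cancels, and a short sign computation finishes the job: for $n\geq k$ the inner sum is $\Delta_k\,h_{n-k}(\alpha_1,\dots,\alpha_k)$, giving $(-1)^{k+1}h_{n-k}(\alpha_1,\dots,\alpha_k)\prod_j\alpha_j$; for $1\leq n\leq k-1$ the inner sum vanishes; and for $n\leq 0$ the inner sum is $(-1)^{k+1}\Delta_k\,h_{|n|}(\alpha_1^{-1},\dots,\alpha_k^{-1})/\prod_j\alpha_j$, so the prefactor's $\prod_j\alpha_j$ cancels the denominator and the two copies of $(-1)^{k+1}$ multiply to $1$, leaving $h_{|n|}(\alpha_1^{-1},\dots,\alpha_k^{-1})$, exactly as stated.

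There is essentially no real obstacle here beyond careful sign bookkeeping; the one point requiring a little attention is the factorisation of $\Delta_k$, i.e. getting the $(-1)^{\ell-1}$ right when separating off the column/row indexed by $\ell$. It is also worth noting that $n$ ranges over all of $\mathbb{Z}$, so $\alpha_\ell^{\,n-1}$ may carry a negative exponent, but this is harmless since all $\alpha_j\neq 0$ and Lemma~\ref{lem:vandermonde} is already stated for every $n\in\mathbb{Z}$. Edge cases such as $k=1$ (where the products over $j\neq\ell$ are empty and $\Delta_1=\Delta_0=1$) are consistent with the formula and need no separate treatment.
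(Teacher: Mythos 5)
Your proposal is correct and follows essentially the same route as the paper: both deduce the lemma from Lemma~\ref{lem:vandermonde} via the identity $\Delta_{k-1}(\alpha_1,\dots,\alpha_k;\alpha_\ell)/\Delta_k(\alpha_1,\dots,\alpha_k) = (-1)^{\ell-1}\prod_{j\neq\ell}(\alpha_j-\alpha_\ell)^{-1}$, then dividing by $\Delta_k$ and multiplying by $\prod_j\alpha_j$, with the same sign bookkeeping. No gaps.
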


\begin{proof}
If we assume $\alpha_1,\dots,\alpha_k \in \mathbb{C}$ are all distinct, then we can divide both sides in Lemma~\ref{lem:vandermonde} by $\Delta_k(\alpha_1,\dots,\alpha_k)$ since it is non-zero when all its arguments are distinct. We have
\begin{equation*}
\frac{\Delta_{k-1}(\alpha_1,\dots,\alpha_k ; \alpha_\ell)}{\Delta_k(\alpha_1,\dots,\alpha_k)} = (-1)^{\ell-1} \prod_{\substack{j=1\\j \neq \ell}}^k \frac{1}{\alpha_j-\alpha_\ell}
\end{equation*}
where we observe that for $j=1,\dots,\ell-1$ there are terms  $\alpha_\ell-\alpha_j$ in the Vandermonde, whereas in the product on the RHS they are  $\alpha_j-\alpha_\ell$, thus accounting for the $(-1)^{\ell-1}$ term.

Therefore Lemma~\ref{lem:vandermonde} directly translates to
\begin{equation}\label{eq:combsum}
\sum_{\ell=1}^k (-1)^{k-1}   \alpha_\ell^{n-1}   \prod_{\substack{j=1\\j \neq \ell}}^k \frac{1}{\alpha_j-\alpha_\ell}
=
\begin{cases}
\displaystyle  h_{n-k}\left(\alpha_1, \dots \alpha_k\right) & \text{ if } n \geq k \\
0 & \text{ if } 1\leq n \leq k-1 \\
\noalign{\vskip1ex}
\displaystyle (-1)^{k+1} \frac{ h_{|n|}\left(\alpha_1^{-1}, \dots \alpha_k^{-1}\right)}{\alpha_1 \alpha_2 \dots \alpha_k} & \text{ if } n \leq 0.
\end{cases}
\end{equation}
Multiplying both sides by $\prod_{j=1}^k \alpha_j$ gives the lemma as stated.
\end{proof}

We now restate various special cases of this lemma in such a way that will be directly applicable in later sections.

Setting $n=-1$ in Lemma~\ref{LemmaProdSum} yields
\begin{equation}\label{eq:LemmaZero}
\sum_{\ell=1}^{k} \frac{1}{\alpha_\ell} \prod_{ \substack{j=1 \\ j \neq \ell}}^{k} \frac{\alpha_j}{(\alpha_j - \alpha_\ell)} = \sum_{\ell=1}^{k} \frac{1}{\alpha_\ell}
\end{equation}
and setting $n=0$ in Lemma~\ref{LemmaProdSum} yields
\begin{equation}\label{eq:lem:n0Term}
\sum_{\ell=1}^k \prod_{\substack{j=1\\j\neq \ell}}^k \frac{\alpha_j}{\alpha_j-\alpha_\ell} = 1.
\end{equation}

\section{Proofs of the random matrix theorems}\label{sect:RMTMot}

In this section we will prove certain results on shifted moments of characteristic polynomials. The motivation for using random matrix models for the zeta function is explained more fully in our companion paper, \cite{HugPC24}, Additionally, in this case the combinatorial identities required are the same as needed in the more complicated ratios methodology, so this is also a fruitful exercise to help understand the combinatorics.

\subsection{Expressing the expectation as a Toeplitz determinant} \hfill

Let
\[
Z(\theta) = \prod_{m=1}^N \left(1-e^{i \theta_m} e^{-i\theta}\right)
\]
be the characteristic polynomial of an $N \times N$ unitary matrix and consider the Haar measure over such matrices, which has Weyl density
\begin{equation}\label{eq:Weyl}
d\mu_N = \frac{1}{N! (2 \pi)^N} \prod_{1 \leq j < \ell \leq N} |e^{i\theta_j} - e^{i\theta_\ell}|^2 \ d\theta_1 \ldots d\theta_N.
\end{equation}
By rotation invariance of Haar measure, there is no preferred eigenvalue, so
\begin{equation*}
\E_N\left[\frac{1}{N} \sum_{n=1}^N Z(\theta_n+\alpha_1) \dots Z(\theta_n+\alpha_k)\right] = \E_N\left[ Z(\theta_N+\alpha_1) \dots Z(\theta_N+\alpha_k)\right]
\end{equation*}
and we therefore need to evaluate
\begin{multline*}
    \E_N\left[ Z(\theta_N+\alpha_1) \dots Z(\theta_N+\alpha_k)\right] \\
    = \frac{1}{N! (2 \pi)^N} \idotsint_{-\pi}^\pi \prod_{1 \leq j < \ell \leq N} |e^{i\theta_j} - e^{i\theta_\ell}|^2 \prod_{r=1}^k \prod_{m=1}^N \left(1-e^{i \theta_m} e^{-i\theta_N} e^{-i\alpha_r}\right)  d\theta_m.
\end{multline*}
If we change variables to $\phi_m = \theta_m-\theta_N$ (for $m=1,\dots,N-1$), make use of the $2\pi$-periodicity, and note that $\theta_N$ integral is now trivial, we see that after factoring the Weyl density and some further manipulation, this equals
\begin{multline*}
\prod_{r=1}^k \left(1-e^{-i\alpha_r}\right) \frac{1}{N \times (N-1)! (2 \pi)^{N-1}} \idotsint_{-\pi}^\pi \prod_{1 \leq j < \ell \leq N-1} |e^{i\phi_j} - e^{i\phi_\ell}|^2 \times \prod_{j=1}^{N-1} |e^{i\phi_m} - 1|^2 \\
\times\prod_{m=1}^{N-1} \prod_{r=1}^k\left(1-e^{i \phi_m} e^{-i\alpha_r} \right) d\phi_m.
\end{multline*}
Note that this can be reinterpreted in terms of an expectation over Haar measure over $(N-1)\times(N-1)$ unitary matrices, and consequently (using Heine's identity) as a Toeplitz determinant. That is, we have shown
\begin{equation}
\E_N\left[ \prod_{r=1}^k Z(\theta_N+\alpha_r) \right] = \frac{1}{N}\prod_{r=1}^k \left(1-e^{-i\alpha_r}\right) D_{N-1}[f] \label{eq:ShiftedCharPolyInTermsOfDN}
\end{equation}
where the Toeplitz determinant
\[
D_{N-1}[f] = \det  \begin{pmatrix}
	\widehat f_0 & \widehat f_0 &  \widehat f_1 &  \dots & \widehat f_{N-1} \\
	\widehat f_{-1} & \widehat f_0 &  \widehat f_1 &  \dots & \widehat f_{N-2} \\
	\widehat f_{-2} & \widehat f_{-1} &  \widehat f_0 &  \dots & \widehat f_{N-3} \\
	\ddots & \ddots & \ddots & \dots \\
	\widehat f_{-(N-1)} & \widehat f_{-(N-2})  & \widehat f_{-(N-3)} & \dots &  \widehat f_0
\end{pmatrix}
\]
has the symbol
\begin{align}
	f(\phi) &=\left|e^{i\phi}-1\right|^2   \left(1-e^{i\phi} e^{-i\alpha_1} \right) \dots  \left(1-e^{i\phi} e^{-i\alpha_k} \right) \notag \\
&=-e^{-i\phi} \left(1-e^{i\phi}\right)^2   \left(1-e^{i\phi} e^{-i\alpha_1} \right) \dots  \left(1-e^{i\phi} e^{-i\alpha_k} \right). \label{eq:f}
\end{align}

\subsection{Expressing the expectation as a linear recurrence relation} \hfill

It will be slightly simpler to evaluate $D_{N}[f]$
with symbol
\begin{equation}\label{eq:fwithA}
f(\phi) = -e^{-i\phi} \prod_{r=1}^{k+2} \left(1-e^{i\phi} A_r \right)
\end{equation}
with distinct $A_r$, where eventually we will change $N \mapsto N-1$, and set $A_{r} = e^{-i \alpha_r}$ for $r=1,\dots,k$, and take the limit as $A_{k+1} \to A_{k+2} =1$. This will allow us to recover \eqref{eq:ShiftedCharPolyInTermsOfDN} via \eqref{eq:f}.

\begin{lemma}\label{lem:fHat}
The Fourier coefficients of $f(\phi)$ given in \eqref{eq:fwithA} are
\begin{align*}
    \widehat f_{-1} &= -1 \\
    \widehat f_\ell &= (-1)^\ell C_{\ell+1} && \text{for $0 \leq \ell \leq k+1$} \\
    \widehat f_{\ell} &= 0 && \text{for $\ell \geq k+2$ or for $\ell \leq -2$}
\end{align*}
where
\[
C_j = C_j(A_1,\dots,A_{k+_2}) = \sum_{\substack{I \subseteq \{1,\dots,k+2\} \\ |I|=j}} \prod_{i \in I} A_i
\]
is the $j$\textsuperscript{th} elementary symmetric polynomial in $k+2$ variables.
\end{lemma}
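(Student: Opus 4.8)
The plan is the obvious one: expand the finite product in \eqref{eq:fwithA} into a Laurent polynomial in $e^{i\phi}$ and read off the coefficients, using that $\widehat f_\ell = \frac{1}{2\pi}\int_{-\pi}^\pi f(\phi)e^{-i\ell\phi}\,d\phi$ is by definition the coefficient of $e^{i\ell\phi}$.

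First I would invoke the generating identity for the elementary symmetric polynomials, $\prod_{r=1}^{k+2}(1+xA_r) = \sum_{j=0}^{k+2}C_j x^j$ with $C_0=1$, and substitute $x = -e^{i\phi}$ to obtain $\prod_{r=1}^{k+2}(1-e^{i\phi}A_r) = \sum_{j=0}^{k+2}(-1)^j C_j e^{ij\phi}$. Multiplying through by the prefactor $-e^{-i\phi}$ then yields the closed form $f(\phi) = \sum_{j=0}^{k+2}(-1)^{j+1}C_j\,e^{i(j-1)\phi}$.

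From this closed form all three cases drop out immediately. The exponent $j-1$ ranges over $\{-1,0,\dots,k+1\}$, so $\widehat f_\ell = 0$ for $\ell\le -2$ and for $\ell\ge k+2$; for $-1\le\ell\le k+1$ the substitution $j=\ell+1$ gives $\widehat f_\ell = (-1)^{\ell+2}C_{\ell+1} = (-1)^\ell C_{\ell+1}$; and the value $\ell=-1$ is listed separately only because $C_0=1$ carries no $A_r$-dependence, giving $\widehat f_{-1}=-1$. There is no genuine obstacle here, since $f$ is a trigonometric polynomial and no estimation or analytic input is needed; the single point to verify carefully is the index shift induced by the factor $e^{-i\phi}$, which is exactly what moves the vanishing thresholds to $-2$ and $k+2$ and isolates the $\widehat f_{-1}$ term.
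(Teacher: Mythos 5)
Your proposal is correct and follows essentially the same route as the paper: expand $\prod_{r=1}^{k+2}(1-XA_r)$ via the elementary symmetric polynomials, multiply by $-X^{-1}$ with $X=e^{i\phi}$, and read off the coefficient of $e^{i\ell\phi}$, with the index shift handled exactly as you describe. No gaps.
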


\begin{proof}
First note that
\[
\prod_{r=1}^{k+2} \left(1-X A_r\right)  = 1 - C_1 X +  C_2 X^2 -  C_3 X^3 + \dots + (-1)^{k+2}  C_{k+2} X^{k+2}
\]
with the $C_j$ given in the lemma.

Next multiply this by $-1/X$, and pick out the coefficient of $X^\ell$ (whose exponent can now range from $-1$ to $k+1$) to see the Fourier coefficients are
\begin{align*}
\widehat f_{-1}&=-1\\
\widehat f_0 &=  C_1\\
\widehat f_1 &= - C_2\\
&\vdots\\
\widehat f_{k+1} &= (-1)^{k+1} C_{k+2}
\end{align*}

This yields the values of $\widehat f_{\ell}$ as given in the lemma.
\end{proof}

\begin{lemma}\label{lem:recurrence}
    For the symbol $f$ given in \eqref{eq:fwithA}, for $N \geq k+2$ the Toeplitz determinant satisfies a finite linear recurrence relation
    \begin{equation*}
    D_{N}[f] = \sum_{j=1}^{k+2} (-1)^{j+1} C_j D_{N-j}[f]
    \end{equation*}
    with the $k+2$ initial conditions given by
    \begin{align*}
        D_0[f] &= 1\\
        D_1[f] &= C_1\\
        D_2[f] &= C_1 D_1[f] - C_2 \\
        D_3[f] &= C_1 D_2[f] - C_2 D_1[f] + C_3\\
        &\vdots\\
        D_{k+1}[f] &= \sum_{j=1}^{k+1} (-1)^{j+1} C_j D_{k+1-j}[f],
    \end{align*}
    where $C_j = C_j(A_1,\dots,A_{k+_2})$ is the $j$\textsuperscript{th} elementary symmetric polynomial in $k+2$ variables given in Lemma~\ref{lem:fHat}.
\end{lemma}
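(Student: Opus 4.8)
The plan is to establish the recurrence by exploiting the structure of the Fourier coefficients computed in Lemma~\ref{lem:fHat}, namely that $\widehat f_\ell = 0$ for $\ell \leq -2$ and for $\ell \geq k+2$. The key observation is that a Toeplitz matrix whose symbol has only finitely many nonzero Fourier coefficients has a banded structure, and banded Toeplitz determinants satisfy linear recurrences. The cleanest route is to expand $D_N[f]$ along its first column (or first row). Since $\widehat f_{-j} = 0$ for $j \geq 2$, the first column of the $(N+1)\times(N+1)$ Toeplitz matrix has nonzero entries only in its first two positions: $\widehat f_0$ and $\widehat f_{-1} = -1$. Expanding along this column gives $D_N[f] = \widehat f_0 M_{11} - \widehat f_{-1} M_{21}$ where $M_{11}$ is itself $D_{N-1}[f]$ (the minor deleting row 1 and column 1 of a Toeplitz matrix is again Toeplitz with the same symbol), and $M_{21}$ is the minor deleting row 2 and column 1.

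\textbf{Second}, I would analyze the minor $M_{21}$. Deleting row 2 and column 1 leaves a matrix that is \emph{almost} Toeplitz: its first row is $(\widehat f_1, \widehat f_2, \dots, \widehat f_N)$ shifted, and the remaining rows form a Toeplitz block. One then expands $M_{21}$ along its own first row. Because $\widehat f_\ell = 0$ for $\ell \geq k+2$, only the first $k+1$ entries of that row contribute, producing a combination $\sum_{j} (-1)^j \widehat f_j (\text{minor})$, and each such minor should reduce — after tracking signs carefully and using the Toeplitz structure repeatedly — to $D_{N-j-1}[f]$. Substituting $\widehat f_j = (-1)^j C_{j+1}$ from Lemma~\ref{lem:fHat} and collecting terms should yield exactly $D_N[f] = C_1 D_{N-1}[f] - C_2 D_{N-2}[f] + \dots + (-1)^{k+1} C_{k+2} D_{N-(k+1)-1}[f]$, i.e. $\sum_{j=1}^{k+2} (-1)^{j+1} C_j D_{N-j}[f]$, valid once $N \geq k+2$ so that all the indices appearing are nonnegative and the banded pattern is fully ``inside'' the matrix.

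\textbf{The alternative (and perhaps slicker) approach} is to invoke the general fact that for a symbol $f(\phi) = \sum_{\ell=-1}^{k+1} \widehat f_\ell e^{i\ell\phi}$, the generating function $\sum_{N\geq 0} D_N[f] z^N$ is rational with denominator determined by the polynomial $\phi \mapsto z^{?}$-weighted symbol; concretely, one uses the Borodin--Okounkov or the classical result that a banded Toeplitz determinant $D_N$ satisfies the linear recurrence whose characteristic polynomial is the reciprocal polynomial of $e^{i\phi} \mapsto$ symbol. Since $e^{i\phi} f(\phi)/(-1) = \prod_{r=1}^{k+2}(1 - e^{i\phi}A_r)$ has coefficients $(-1)^j C_j$, the recurrence coefficients are $(-1)^{j+1}C_j$. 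However, to keep the paper self-contained I would favor the direct cofactor-expansion argument, which also transparently produces the initial conditions: the formulas for $D_0[f], \dots, D_{k+1}[f]$ are just the same expansion applied when $N < k+2$, where the band ``runs off the edge'' of the matrix and fewer terms survive.

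\textbf{The main obstacle} will be the bookkeeping in the second step: carefully justifying that each minor arising from expanding $M_{21}$ along its first row collapses to the Toeplitz determinant $D_{N-j-1}[f]$ with the correct sign. This requires either an induction on the number of expansion steps or a clean lemma about minors of banded Toeplitz matrices obtained by deleting a top-left ``staircase''; the signs $(-1)^{j+1}$ must be tracked through the cofactor signs $(-1)^{\text{row}+\text{col}}$ and through the $(-1)^j$ in $\widehat f_j = (-1)^j C_{j+1}$, and it is easy to drop a sign. Once that combinatorial lemma is in place, both the recurrence and all the initial conditions follow uniformly.
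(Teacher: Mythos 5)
Your proposal is correct and essentially reproduces the paper's argument: both exploit that $\widehat f_{-1}=-1$ and $\widehat f_\ell=0$ for $\ell\leq -2$ make the Toeplitz matrix (upper) Hessenberg, evaluate the determinant by cofactor expansion to get $D_N[f]=\widehat f_0 D_{N-1}[f]+\widehat f_1 D_{N-2}[f]+\dots+\widehat f_{N-1}$, and then substitute $\widehat f_j=(-1)^j C_{j+1}$ together with $\widehat f_\ell=0$ for $\ell\geq k+2$ to truncate to the finite recurrence, the initial conditions being the same expansion for $N\leq k+1$. The only difference is cosmetic (you expand the single surviving minor along its first row rather than iterating the first-column expansion), so the sign bookkeeping you flag is exactly the routine step the paper handles by repeated expansion down the first column.
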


\begin{proof}
    Recall the $D_N[f]$ is the Toeplitz matrix with $(i,j)$\textsuperscript{th} entry equal to $\widehat f_{j-i}$. Since the previous Lemma showed that $\widehat f_{-1} = -1$ and $\widehat f_\ell = 0$ for $\ell \leq -2$ we see that the matrix in question is upper triangular, apart from $-1$ on the lower diagonal. This type of determinant is simple to evaluate by repeatedly expanding down the first column, and equals
 \[
D_{N}[f] = \widehat f_0 D_{N-1}[f] + \widehat f_1 D_{N-2}[f] + \dots + \widehat f_{N-2} D_1[f] + \widehat f_{N-1}
\]

Next, use Lemma~\ref{lem:fHat} again to replace $\widehat f_\ell$ with their values in terms of $A_1, \dots, A_{k+2}$. Finally,  noticing that $\widehat f_{\ell}=0$ for $\ell \geq k+2$ makes the expression a finite linear recurrence relation.
\end{proof}

\begin{lemma}\label{lem:RecurrenceGeneralSolution}
The recurrence given in Lemma~\ref{lem:recurrence} has a general solution of
\begin{equation}\label{eq:GenSolRec}
D_N[f] = \sum_{r=1}^{k+2} c_r A_r^N
\end{equation}
where the $c_r$ are coefficients independent of $N$ (but do depend upon $A_1,\dots,A_{k+2}$).
\end{lemma}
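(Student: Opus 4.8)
The plan is to argue via the standard theory of linear recurrences with constant coefficients. The recurrence in Lemma~\ref{lem:recurrence} is
\[
D_N[f] = \sum_{j=1}^{k+2} (-1)^{j+1} C_j \, D_{N-j}[f],
\]
so its characteristic polynomial is
\[
P(x) = x^{k+2} - C_1 x^{k+1} + C_2 x^{k} - \dots + (-1)^{k+1} C_{k+2} = \sum_{j=0}^{k+2} (-1)^j C_j x^{k+2-j}.
\]
First I would observe that, by the elementary symmetric function expansion already used in Lemma~\ref{lem:fHat}, this is exactly
\[
P(x) = \prod_{r=1}^{k+2} (x - A_r).
\]
Hence the roots of the characteristic polynomial are precisely $A_1,\dots,A_{k+2}$, and since these are assumed distinct, $P$ has $k+2$ simple roots. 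The general theory then immediately gives that every solution of the recurrence is of the form $D_N[f] = \sum_{r=1}^{k+2} c_r A_r^N$ for constants $c_r$ (depending on the $A_i$ but not on $N$), because the functions $N \mapsto A_r^N$ form a basis of the $(k+2)$-dimensional solution space of the recurrence.

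To make this self-contained rather than merely citing "the general theory," I would include the short verification that $N \mapsto A_r^N$ solves the recurrence: substituting gives $A_r^N - \sum_{j=1}^{k+2}(-1)^{j+1} C_j A_r^{N-j} = A_r^{N-k-2} P(A_r) = 0$. Linearity then shows any combination $\sum_r c_r A_r^N$ is a solution. Conversely, since the recurrence is order $k+2$, a solution is uniquely determined by its first $k+2$ values $D_0[f],\dots,D_{k+1}[f]$; so it suffices to check that the $c_r$ can be chosen to match these initial conditions. That amounts to solving the linear system $\sum_{r=1}^{k+2} c_r A_r^m = D_m[f]$ for $m=0,\dots,k+1$, whose coefficient matrix is the Vandermonde matrix in $A_1,\dots,A_{k+2}$, which is invertible precisely because the $A_r$ are distinct (this is exactly the setting of Section~\ref{Prelim}). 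This uniquely determines the $c_r$, completing the proof.

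The only mild subtlety — and the one point I would be careful about — is the hypothesis that the $A_r$ are distinct, which is exactly what guarantees the characteristic roots are simple and the Vandermonde system solvable; this is why the lemma is stated for distinct $A_r$, with the coincident limit $A_{k+1}\to A_{k+2}=1$ (and later $A_r = e^{-i\alpha_r}$) to be taken only afterwards. There is no real obstacle here: the result is a direct application of the constant-coefficient recurrence theory once one recognizes the characteristic polynomial as $\prod_r (x-A_r)$, so the ``hard part'' is essentially bookkeeping — making sure the factorization of $P(x)$ is invoked cleanly and that the matching of initial conditions is phrased in terms of the nonvanishing Vandermonde determinant.
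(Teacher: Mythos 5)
Your proposal is correct and follows essentially the same route as the paper: write down the characteristic (generating) polynomial of the recurrence, recognise it as $\prod_{r=1}^{k+2}(x-A_r)$ via the elementary symmetric polynomials, and use distinctness of the $A_r$ to conclude that the general solution is $\sum_{r=1}^{k+2} c_r A_r^N$. Your added details (verifying that each $A_r^N$ satisfies the recurrence and that the initial conditions can be matched because the Vandermonde matrix in the distinct $A_r$ is invertible) simply make explicit what the paper cites as well known; the only blemish is the sign in your ellipsis form of $P(x)$, whose constant term should be $(-1)^{k+2}C_{k+2}=(-1)^{k}C_{k+2}$, in agreement with the summation formula you state immediately afterwards.
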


\begin{proof}
    To find the general solution to a linear difference equation with constant coefficients, we need to find the roots of the generating polynomial, namely
    \[
    x^{k+2} - C_1 x^{k+1} + C_2 x^{k} - \dots + (-1)^{k} C_{k+2} = 0.
    \]
Recall that $C_1, \dots, C_{k+2}$ are the elementary symmetric polynomials in $k+2$ variables, and it is well-known (and simple to directly verify) that they satisfy
\[
\prod_{j=1}^{k+2} \left(x - A_j\right) = x^{k+2} - C_1 x^{k+1} + C_2 x^{k} - \dots + (-1)^{k} C_{k+2}.
\]
Therefore we have identified all $k+2$ roots of the generating polynomial for the linear recurrence as $x=A_r$, and it is well-known that if all the roots are distinct, then the general solution is a linear combination of the $N$\textsuperscript{th} power of each root.

This proves the general solution of the difference equation is as given in the statement of the lemma.
\end{proof}

\begin{lemma}\label{lem:RecurrenceSpecificSolution}
The specific solution of \eqref{eq:GenSolRec} which satisfies the initial conditions given in Lemma~\ref{lem:recurrence} has coefficients
\[
c_r =  A_r^{k+1} \prod_{\substack{j=1 \\ j \neq r}}^{k+2} \frac{1}{(A_r - A_j)}.
\]
\end{lemma}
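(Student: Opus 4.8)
The plan is to determine the coefficients $c_r$ in the general solution $D_N[f] = \sum_{r=1}^{k+2} c_r A_r^N$ by imposing the $k+2$ initial conditions from Lemma~\ref{lem:recurrence}. Rather than wrestling directly with the somewhat awkward nested recurrences $D_0[f]=1$, $D_1[f]=C_1$, $D_2[f]=C_1 D_1[f]-C_2$, and so on, I would first observe that these initial conditions are themselves equivalent to the statement that the recurrence $D_N[f] = \sum_{j=1}^{k+2}(-1)^{j+1}C_j D_{N-j}[f]$ holds for \emph{all} $N$ in the range $1 \le N \le k+1$ once one adopts the convention $D_m[f]$ for negative-index or the natural truncation — more cleanly, the initial conditions are exactly what one reads off from the upper-triangular-plus-subdiagonal determinant formula $D_N[f] = \widehat f_0 D_{N-1}[f] + \dots + \widehat f_{N-2}D_1[f] + \widehat f_{N-1}$ specialized to small $N$. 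So it suffices to verify that the proposed closed form reproduces these $k+2$ values. Equivalently, I would set up the linear system: for $N = 0, 1, \dots, k+1$ we need $\sum_{r=1}^{k+2} c_r A_r^N = D_N[f]$, which is a Vandermonde system in the $c_r$.

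The cleanest route, I think, is to guess-and-verify: propose $c_r = A_r^{k+1}\prod_{j\neq r}\frac{1}{A_r - A_j}$ and check it satisfies all $k+2$ initial conditions. For $0 \le N \le k$, plugging in the proposed $c_r$ gives $\sum_{r=1}^{k+2} A_r^{N+k+1}\prod_{j\neq r}\frac{1}{A_r-A_j}$, and I would recognize this as precisely the kind of sum handled by Lemma~\ref{LemmaProdSum} (or its rewritten form \eqref{eq:combsum}): after pulling the product $\prod_j A_j$ through appropriately it becomes $\pm h_{m}$ of the $A$'s or their inverses with $m = (N+k+1) - (k+2) = N-1$, or the vanishing case. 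Concretely, with $k+2$ variables, the sum $\sum_r A_r^{n-1}\prod_{j\neq r}\frac{1}{A_r - A_j}$ vanishes for $1 \le n-1 \le k$ — wait, I need to be careful with the index bookkeeping, but the point is that for the lower initial conditions the relevant exponent falls in the ``vanishing'' regime or the small-$h_m$ regime, and Lemma~\ref{LemmaProdSum} delivers exactly $D_N[f]$. For the borderline/top cases $N$ near $k+1$ one gets $h_0 = 1$ or $h_1 = C_1$ type contributions, matching $D_0[f]=1$, $D_1[f]=C_1$, etc.

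An alternative I would keep in reserve, in case the direct verification of the nested initial conditions gets messy, is to sidestep initial conditions entirely: extend the recurrence to define $D_N[f]$ for $N$ down through $N = 0, -1, \dots$ and show the closed form $\sum_r c_r A_r^N$ is forced. Actually the slickest argument may be this: the sum $g(N) := \sum_{r=1}^{k+2} A_r^{N+k+1}\prod_{j\neq r}\frac{1}{A_r-A_j}$ is, for each fixed $N \ge -(k+1)$, a complete homogeneous symmetric polynomial $h_{N}(A_1,\dots,A_{k+2})$ (this is the standard generating-function identity for $h_m$, valid since for $N < 0$ down to $-(k+1)$ one is in the regime where the Schur-type sum still has a clean evaluation, and for $N \ge 0$ it is the classical formula $h_N = \sum_r A_r^{N+k+1}/\prod_{j\neq r}(A_r - A_j)$). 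Then $g(N)$ visibly satisfies the recurrence because $\sum_{j=1}^{k+2}(-1)^{j+1}C_j h_{N-j} = h_N$ is Newton's identity relating $e$'s and $h$'s, and checking $g(0)=1=D_0[f]$, $g(1)=C_1=D_1[f]$, $g(2)=h_2 = C_1^2 - C_2$, which one verifies equals $C_1 D_1[f] - C_2 = C_1^2 - C_2$, etc., confirms the match on the nose.

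The main obstacle I anticipate is purely bookkeeping: getting the index shifts exactly right between ``$D_N[f]$'' and ``$h_N$ of $k+2$ variables'', and confirming that all $k+2$ of the somewhat intricate initial conditions in Lemma~\ref{lem:recurrence} are genuinely captured by the single identity $g(N) = h_N(A_1,\dots,A_{k+2})$ together with Newton's identity. There is no deep difficulty — everything reduces to Lemma~\ref{LemmaProdSum} and the classical relationship between elementary and complete homogeneous symmetric polynomials — but care is needed at the two boundary values $N = k$ and $N = k+1$ where the behaviour of the Vandermonde-quotient sum transitions between regimes.
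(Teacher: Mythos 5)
Your ``slickest argument'' is exactly the paper's proof: identify $\sum_r A_r^{N+k+1}\prod_{j\neq r}(A_r-A_j)^{-1}$ with $h_N(A_1,\dots,A_{k+2})$ via the Vandermonde sum of Lemma~\ref{LemmaProdSum} (equation~\eqref{eq:combsum} with $n=N+k+2$), then use the identity $\sum_{j=0}^m(-1)^jC_jh_{m-j}=0$ between elementary and complete homogeneous symmetric polynomials to check that this candidate satisfies all the initial conditions of Lemma~\ref{lem:recurrence}, so uniqueness of the solution of the linear recurrence finishes the argument. The hesitation in your second paragraph about which regime of Lemma~\ref{LemmaProdSum} applies resolves as you suspected (for $N\ge 0$ one is always in the $h_{N}$ regime, never the vanishing one), and the rest of your bookkeeping is sound.
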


\begin{proof}
Let
\begin{align*}
\widetilde D_N &:= \sum_{r=1}^{k+2} c_r A_r^N\\
&=\sum_{r=1}^{k+2} A_r^{N+k+1} \prod_{\substack{j=1 \\ j \neq r}}^{k+2} \frac{1}{(A_r - A_j)}
\end{align*}

We will show $\widetilde D_N$ equals the values of $D_N[f]$ given in Lemma~\ref{lem:recurrence} when $N$ runs from $1$ to $k+2$, and since it has the same shape as the general solution given in Lemma~\ref{lem:RecurrenceGeneralSolution}, this proves $\widetilde D_N = D_N[f]$ for all $N$.

First note that a direct application of equation~\eqref{eq:combsum} with $k+2$ variables (rather than $k$ variables in that equation) and with $n=N+k+2$ shows that
\begin{align}
\widetilde D_N &= \sum_{r=1}^{k+2} A_r^{N+k+1} \prod_{\substack{j=1 \\ j \neq r}}^{k+2} \frac{1}{(A_r - A_j)} \notag\\
&= \sum_{r=1}^{k+2} A_r^{N+k+1}  (-1)^{k+1} \prod_{\substack{j=1 \\ j \neq r}}^{k+2} \frac{1}{(A_j - A_r)} \notag\\
&= h_{N}(A_1,\dots,A_{k+2}) \label{eq:DtildeCompleteSymPoly}
\end{align}
where $h_N(A_1,\dots,A_{k+2})$ is the complete homogeneous symmetric polynomial of degree $N$ in $k+2$ variables.

Next recall the relationship between the elementary symmetric polynomials and the complete homogeneous ones, namely for $m=1,\dots,k+2$
\[
\sum_{j=0}^m (-1)^j C_j\left(A_1,\dots,A_{k+2}\right) h_{m-j}\left(A_1,\dots,A_{k+2}\right) = 0
\]
so when we substitute \eqref{eq:DtildeCompleteSymPoly}, this shows
\[
\sum_{j=0}^m (-1)^j C_j \widetilde D_{m-j} = 0
\]
which is exactly the initial conditions given in Corollary~\ref{lem:recurrence}.

Therefore we can conclude that $D_N[f] = \widetilde D_N$, since Lemma~\ref{lem:RecurrenceGeneralSolution} shows it satisfies the general solution of the linear recurrence relation given in Lemma~\ref{lem:recurrence}, and this lemma is just shown it satisfies all the initial conditions given in Lemma~\ref{lem:recurrence}. That is, we have proven that for any $N$,
\begin{equation}\label{eq:DNfClosedForm}
D_N[f] = (-1)^{k+1} \sum_{r=1}^{k+2}  A_r^{N+k+1} \prod_{\substack{j=1 \\ j \neq r}}^{k+2} \frac{1}{(A_j - A_r)}.
\end{equation}
\end{proof}

\begin{remark}
As an aside, recall \eqref{eq:ShiftedCharPolyInTermsOfDN} and \eqref{eq:f}. Setting $A_k = e^{-i\alpha_k}$ and $N \mapsto N-1$, a consequence of this proof and equation~\eqref{eq:DtildeCompleteSymPoly} is that
\[
\E_N\left[ \prod_{r=1}^k Z(\theta_N+\alpha_r) \right] =  \frac{1}{N}\prod_{r=1}^k \left(1-e^{-i\alpha_r}\right) h_{N-1}\left(e^{-i\alpha_1} , \dots, e^{-i\alpha_k} , 1,1 \right)
\]
This is one of the exact representations of the shifted discrete moments of the characteristic polynomial given in Theorem~\ref{thm:CharPoly}.
However, that representation is not amenable for large $N$-asymptotic analysis, so we will continue to work with \eqref{eq:DNfClosedForm}.
\end{remark}

\begin{lemma}
    In the limit $A_{k+1} \to A_{k+2} = 1$, we have
\begin{equation*}
D_N[f] = \sum_{r=1}^{k} A_r^{N+k+1} \frac{1}{(A_r-1)^2} \prod_{\substack{j=1 \\ j \neq r}}^{k} \frac{1}{(A_r - A_j)} + \left(N+k+1 - \sum_{\ell=1}^k \frac{1}{1-A_\ell} \right) \prod_{j=1}^k \frac{1}{1-A_j}
\end{equation*}
\end{lemma}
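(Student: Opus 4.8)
The plan is to start from the closed form
\[
D_N[f] = (-1)^{k+1} \sum_{r=1}^{k+2}  A_r^{N+k+1} \prod_{\substack{j=1 \\ j \neq r}}^{k+2} \frac{1}{(A_j - A_r)}
\]
given in \eqref{eq:DNfClosedForm}, and separate the sum into the $r \leq k$ terms and the two special terms $r = k+1$ and $r = k+2$. First I would handle the $r \leq k$ terms: for these, the product over $j \neq r$ includes the two factors $(A_{k+1} - A_r)$ and $(A_{k+2} - A_r)$, which in the limit $A_{k+1} \to A_{k+2} = 1$ both tend to $(1 - A_r)$, contributing $(1-A_r)^2 = (A_r-1)^2$ in the denominator; the remaining factors $\prod_{j \neq r, j \leq k} (A_j - A_r)$ combine with the overall $(-1)^{k+1}$ and a sign flip from rewriting $(A_j - A_r)$ as $-(A_r - A_j)$ over the $k-1$ such indices, producing exactly $\prod_{j \neq r, j \leq k} \frac{1}{A_r - A_j}$ after the signs $(-1)^{k+1}(-1)^{k-1} = 1$ cancel. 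That gives the first sum in the stated identity with no further work.

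The harder part is the pair of terms $r = k+1, k+2$, which individually blow up like $1/(A_{k+1} - A_{k+2})$ as $A_{k+1} \to A_{k+2}$, so their singularities must be shown to cancel and leave the finite expression involving the factor $N + k + 1 - \sum_\ell \frac{1}{1-A_\ell}$. The cleanest route is to \emph{avoid} taking the limit term-by-term and instead use the alternative representation from \eqref{eq:DtildeCompleteSymPoly}, namely $D_N[f] = h_N(A_1,\dots,A_{k+1},A_{k+2})$, which is manifestly a polynomial in all the $A_r$ and hence continuous; then set $A_{k+1} = A_{k+2} = 1$ inside the complete homogeneous symmetric polynomial, giving $h_N(A_1,\dots,A_k,1,1)$. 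To match the stated closed form one then needs the partial-fraction-type identity that expresses $h_N(A_1,\dots,A_k,1,1)$ as the sum of a term $\sum_{r \leq k} A_r^{N+k+1} (A_r-1)^{-2} \prod_{j\neq r}(A_r-A_j)^{-1}$ plus a "confluent" contribution from the repeated value $1$. Concretely, I would take \eqref{eq:DNfClosedForm} with the two variables $A_{k+1}, A_{k+2}$ kept distinct, write the $r=k+1$ and $r=k+2$ summands over a common denominator $(A_{k+1} - A_{k+2})$, and compute the limit as $A_{k+1} \to A_{k+2} = 1$ as a derivative (L'Hôpital / difference quotient in one variable): the limit of $\frac{g(A_{k+1}) - g(A_{k+2})}{A_{k+1} - A_{k+2}}$ is $g'(1)$ where $g(x) = x^{N+k+1} \prod_{j=1}^k \frac{1}{(A_j - x)} \cdot \frac{1}{(\text{partner factor})}$, suitably arranged.

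Carrying that derivative out, the term where the derivative hits $x^{N+k+1}$ produces the $(N + k + 1)$ piece, and the terms where it hits the factors $\frac{1}{A_j - x}$ produce, after evaluating at $x = 1$, precisely $-\sum_{\ell=1}^k \frac{1}{A_\ell - 1} = \sum_{\ell=1}^k \frac{1}{1 - A_\ell}$ with the opposite sign, i.e. the $-\sum_\ell \frac{1}{1-A_\ell}$ correction; all of this is multiplied by $\prod_{j=1}^k \frac{1}{1-A_j}$ once the signs and the overall $(-1)^{k+1}$ are tracked. I expect the main obstacle to be bookkeeping: correctly accounting for all the sign factors (the $(-1)^{k+1}$ prefactor, the $k-1$ versus $k$ sign flips from reorienting $(A_j - A_r) \leftrightarrow (A_r - A_j)$, and the orientation of the two confluent factors) so that the final answer comes out with exactly the signs displayed in the statement. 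A useful sanity check along the way is to verify the identity for small $k$ (say $k=0$ and $k=1$) and small $N$ directly against $h_N(A_1,\dots,A_k,1,1)$, and to confirm that both sides are symmetric polynomials in $A_1,\dots,A_k$ of the expected degree, which pins down the normalisation unambiguously.
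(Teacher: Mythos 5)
Your proposal is correct and follows essentially the same route as the paper: starting from the closed form \eqref{eq:DNfClosedForm}, taking the limit directly in the $r\leq k$ terms, and combining the two confluent terms $r=k+1,k+2$ over the common denominator so that the limit is a derivative $g'(1)$ with $g(x)=x^{N+k+1}\prod_{j=1}^{k}(x-A_j)^{-1}$, exactly the L'H\^opital step in the paper. The only cosmetic difference is that the paper sets $A_{k+2}=1$ first and applies L'H\^opital in $A_{k+1}$, whereas you phrase it as a symmetric divided difference; the computation and the resulting term $\bigl(N+k+1-\sum_{\ell}\tfrac{1}{1-A_\ell}\bigr)\prod_{j}\tfrac{1}{1-A_j}$ are identical.
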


\begin{proof}
Starting with \eqref{eq:DNfClosedForm} we notice that for $r=1,\dots,k$ when $A_{k+1} \to A_{k+2} = 1$ we have
\[
 A_r^{N+k+1} \prod_{\substack{j=1 \\ j \neq r}}^{k+2} \frac{1}{(A_r - A_j)} \to  A_r^{N+k+1} \frac{1}{(A_r - 1)^2} \prod_{\substack{j=1 \\ j \neq r}}^{k} \frac{1}{(A_r - A_j)}
\]
and the $r=k+1$ and $r=k+2$ terms in the sum together equal, upon setting $A_{k+2}=1$
\begin{equation*}
 \frac{A_{k+1}^{N+k+1}}{A_{k+1}-1} \prod_{j=1}^{k} \frac{1}{(A_{k+1} - A_j)} + \frac{1}{1-A_{k+1}}\prod_{j=1}^{k} \frac{1}{(1 - A_j)} = \frac{ A_{k+1}^{N+k+1} \prod_{j=1}^{k} \frac{1}{(A_{k+1} - A_j)} - \prod_{j=1}^{k} \frac{1}{(1 - A_j)} }{A_{k+1}-1}
\end{equation*}
and an application of L'Hopital's rule shows as $A_{k+1} \to 1$ this equals
\[
\left(N+k+1 - \sum_{\ell=1}^k \frac{1}{1-A_\ell} \right) \prod_{j=1}^k \frac{1}{1-A_j}
\]
as required.
\end{proof}

\begin{corollary}\label{cor:shiftedMmtsExact}
    We have
\begin{equation*}
\E_N\left[ \prod_{r=1}^k Z(\theta_N+\alpha_r) \right]
= \frac{1}{N} \left(\sum_{\ell=1}^{k} \frac{e^{-i(N+k)\alpha_\ell} }{1-e^{-i\alpha_\ell}} \prod_{\substack{j=1 \\ j \neq \ell}}^{k} \frac{1-e^{-i\alpha_j}}{(e^{-i\alpha_\ell} - e^{-i\alpha_j})} + N+k - \sum_{\ell=1}^k \frac{1}{1-e^{-i\alpha_\ell}} \right).
\end{equation*}
\end{corollary}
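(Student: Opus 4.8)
The plan is to obtain Corollary~\ref{cor:shiftedMmtsExact} by substituting the explicit evaluation of $D_{N-1}[f]$ from the preceding lemma into the identity~\eqref{eq:ShiftedCharPolyInTermsOfDN}, and then simplifying.

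First I would recall that, by~\eqref{eq:ShiftedCharPolyInTermsOfDN} together with~\eqref{eq:f}, one has
\[
\E_N\left[ \prod_{r=1}^k Z(\theta_N+\alpha_r) \right] = \frac{1}{N}\prod_{r=1}^k \left(1-e^{-i\alpha_r}\right) D_{N-1}[f],
\]
and that the symbol $f$ appearing here is precisely the symbol~\eqref{eq:fwithA} in the degenerate case $A_r = e^{-i\alpha_r}$ for $1 \le r \le k$ and $A_{k+1} = A_{k+2} = 1$. Hence the previous lemma, applied with $N$ replaced by $N-1$ and with these values of the $A_r$, supplies the closed form
\[
D_{N-1}[f] = \sum_{r=1}^{k} \frac{e^{-i(N+k)\alpha_r}}{(e^{-i\alpha_r}-1)^2} \prod_{\substack{j=1 \\ j \neq r}}^{k} \frac{1}{e^{-i\alpha_r} - e^{-i\alpha_j}} + \left(N+k - \sum_{\ell=1}^k \frac{1}{1-e^{-i\alpha_\ell}} \right) \prod_{j=1}^k \frac{1}{1-e^{-i\alpha_j}}.
\]

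The remaining work is purely algebraic simplification after multiplying through by $\prod_{r=1}^k(1-e^{-i\alpha_r})$. In the second term this factor cancels $\prod_{j=1}^k (1-e^{-i\alpha_j})^{-1}$ outright, leaving $N+k - \sum_\ell (1-e^{-i\alpha_\ell})^{-1}$. In the $r$th summand of the first term, writing $\prod_{j=1}^k(1-e^{-i\alpha_j}) = (1-e^{-i\alpha_r})\prod_{j\neq r}(1-e^{-i\alpha_j})$ and cancelling against $(e^{-i\alpha_r}-1)^2 = (1-e^{-i\alpha_r})^2$ produces the factor $\prod_{j\neq r}(1-e^{-i\alpha_j})/(1-e^{-i\alpha_r})$; combining this with the product $\prod_{j\neq r}(e^{-i\alpha_r}-e^{-i\alpha_j})^{-1}$ yields exactly
\[
\frac{e^{-i(N+k)\alpha_r}}{1-e^{-i\alpha_r}} \prod_{\substack{j=1 \\ j \neq r}}^{k} \frac{1-e^{-i\alpha_j}}{e^{-i\alpha_r} - e^{-i\alpha_j}}.
\]
Dividing through by $N$ and relabelling the summation index $r$ as $\ell$ gives the displayed formula.

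There is essentially no obstacle here: the statement is a direct specialisation of the previous lemma combined with~\eqref{eq:ShiftedCharPolyInTermsOfDN}, so the content is bookkeeping. The only points that warrant a word of care are (i) checking that the symbol in~\eqref{eq:f} really is the $A_{k+1} \to A_{k+2} = 1$ limit of~\eqref{eq:fwithA}, so that the previous lemma applies, and (ii) noting that the formula as written requires the shifts $\alpha_1,\dots,\alpha_k$ to be distinct so that the Vandermonde-type denominators do not vanish, the general (possibly coincident) case following by continuity, since the left-hand side is entire in the $\alpha_r$.
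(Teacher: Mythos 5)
Your proposal is correct and follows the paper's own route exactly: the paper likewise proves this corollary by substituting the preceding lemma (with $N$ replaced by $N-1$ and $A_r = e^{-i\alpha_r}$) into \eqref{eq:ShiftedCharPolyInTermsOfDN}, and your algebraic simplification of the resulting expression is accurate. The extra remarks on the degenerate symbol and on distinctness of the shifts are sensible but not points of divergence from the paper.
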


\begin{proof}
Plug the previous lemma, with $N$ replaced by $N-1$ and $A_r$ replaced by $e^{-i\alpha_r}$ for $r=1,\dots,k$, into equation \eqref{eq:ShiftedCharPolyInTermsOfDN}. This is the one of the terms of the shifted discrete moments of the characteristic polynomial in Theorem~\ref{thm:CharPoly}.
\end{proof}

The following lemma is a restatement of Theorem~\ref{thm:CharPolyShiftsLeadingOrder}.
\begin{lemma}
As $N \to \infty$ uniformly for bounded $a_1,\dots,a_r$, we have
\begin{equation}\label{eq:CharPolyShiftedAsympt}
\lim_{N \to \infty} \E_N\left[ \prod_{r=1}^k Z\left(\theta_N+\frac{a_r}{N}\right) \right]
=   \sum_{m=0}^\infty \frac{(-1)^{m} i^{k+m}}{(m+k+1)!} h_{m}(a_1,\dots,a_k)   \prod_{j=1}^k a_j
\end{equation}
\end{lemma}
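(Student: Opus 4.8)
The plan is to start from the exact formula for the shifted moment given in Corollary~\ref{cor:shiftedMmtsExact}, substitute $\alpha_r = a_r/N$, and extract the $N\to\infty$ limit term by term. First I would write $e^{-i\alpha_r} = e^{-ia_r/N}$ and observe that as $N\to\infty$ the factor $1 - e^{-ia_r/N} = \frac{ia_r}{N} + O(N^{-2})$, so each factor $\frac{1}{1-e^{-i\alpha_r}}$ contributes a pole of order $1$ at $N=\infty$ in the variable $1/N$; the product $\prod_{j=1}^k \frac{1}{1-e^{-i\alpha_j}}$ behaves like $\left(\frac{N}{i}\right)^k \prod a_j^{-1}$. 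Hence each of the two groups of terms in Corollary~\ref{cor:shiftedMmtsExact}, divided by the overall $1/N$, has individual pieces that blow up like $N^{k-1}$, and the content of the lemma is that these divergences cancel and leave a finite limit. So the real work is a careful Laurent expansion in $1/N$ and tracking the cancellation.

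The cleanest route is to recognize the bracketed expression as (essentially) $D_{N-1}[f]$ with $A_r = e^{-ia_r/N}$, $A_{k+1}=A_{k+2}=1$, and to use instead the representation \eqref{eq:DtildeCompleteSymPoly}, i.e. that before the confluent limit $D_{N-1}[f] = h_{N-1}(A_1,\dots,A_k,1,1)$. Combined with \eqref{eq:ShiftedCharPolyInTermsOfDN} this gives
\[
\E_N\!\left[\prod_{r=1}^k Z\!\left(\theta_N+\tfrac{a_r}{N}\right)\right] = \frac{1}{N}\prod_{r=1}^k\left(1-e^{-ia_r/N}\right)\, h_{N-1}\!\left(e^{-ia_1/N},\dots,e^{-ia_k/N},1,1\right).
\]
Now I would use the generating-function identity $\sum_{m\ge 0} h_m(x_1,\dots,x_{k+2}) t^m = \prod_{j=1}^{k+2}(1-x_j t)^{-1}$ to write $h_{N-1}$ as a contour integral $\frac{1}{2\pi i}\oint \frac{dt}{t^{N}} \prod_{j=1}^{k}(1-e^{-ia_j/N}t)^{-1}(1-t)^{-2}$, or more simply extract the asymptotics of $h_{N-1}(e^{-ia_1/N},\dots,e^{-ia_k/N},1,1)$ directly: the two arguments equal to $1$ force $h_{N-1}$ to grow polynomially in $N$, and substituting $x_j = e^{-ia_j/N}$ one finds, after rescaling, that $\prod_r(1-e^{-ia_r/N}) h_{N-1}(\dots) \to \sum_{m\ge 0}\frac{(-1)^m i^{k+m}}{(m+k+1)!} h_m(a_1,\dots,a_k)\prod_j a_j$. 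Concretely, one expands $h_{N-1}$ as a sum over the multiplicities of the repeated argument $1$, getting $\sum_{\ell=0}^{N-1}(\ell+1)\, h_{N-1-\ell}(e^{-ia_1/N},\dots,e^{-ia_k/N})$; then $h_{N-1-\ell}(e^{-ia/N}) \approx \frac{N^{k-1}}{(k-1)!}\cdot(\text{something})$ and the sum over $\ell$ turns into a Riemann sum / $\Gamma$-integral producing the factorials $(m+k+1)!$.

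The main obstacle is managing this double expansion uniformly: I need to expand each $h_{N-1-\ell}(e^{-ia_1/N},\dots,e^{-ia_k/N})$ in powers of $1/N$ while simultaneously summing over $\ell$ from $0$ to $N-1$, and show that the tail of the $\ell$-sum and the higher-order terms in each expansion are genuinely negligible and that the interchange of limit and summation is legitimate (justifying the convergence of the resulting infinite series in $m$). The homogeneous symmetric polynomial $h_m(a_1,\dots,a_k)$ emerges because $h_{N-1-\ell}\big(1+\tfrac{-ia_1}{N}+\cdots,\dots\big)$, expanded to the relevant order, has its $m$-th order term proportional to $h_m(a_1,\dots,a_k)$ times a binomial coefficient in $N-1-\ell$; packaging these binomial coefficients against the weight $(\ell+1)$ and summing gives the Beta-integral $\int_0^1 u^{k-1}(1-u)^{m}\,du$-type evaluation, which is exactly $\frac{(k-1)!\, m!}{(m+k+1)!}$ — and the leftover factor $\prod_j a_j$ comes from the $\prod_r(1-e^{-ia_r/N}) \sim \prod_r \frac{ia_r}{N}$ prefactor, which also supplies the $i^k$ and kills the surplus power of $N$. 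Once the bookkeeping of powers of $i$, signs, and factorials is done, the stated formula follows; alternatively, if the $\ell$-sum proves awkward, I would fall back to the contour-integral representation of $h_{N-1}$ and evaluate it by a saddle/residue analysis as $N\to\infty$, which makes the uniformity in bounded $a_1,\dots,a_k$ transparent.
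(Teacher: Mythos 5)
Your strategy is genuinely different from the paper's, and it can be made to work. You pass from Corollary~\ref{cor:shiftedMmtsExact} back to the exact form $\frac{1}{N}\prod_{r=1}^k\bigl(1-e^{-ia_r/N}\bigr)h_{N-1}\bigl(e^{-ia_1/N},\dots,e^{-ia_k/N},1,1\bigr)$ of Theorem~\ref{thm:CharPoly} and extract the limit from the symmetric-polynomial side: interpreting $h_{N-1}$ as a Riemann sum over the simplex $\{u_1,\dots,u_{k+1}\ge 0,\ u_1+\cdots+u_{k+1}\le 1\}$ gives the limit $i^k\prod_j a_j\int e^{-i\sum_j a_j u_j}\,du_1\cdots du_{k+1}$, and the Dirichlet integral $\int u_1^{n_1}\cdots u_k^{n_k}\,du_1\cdots du_{k+1}=\frac{n_1!\cdots n_k!}{(m+k+1)!}$ (with $m=n_1+\cdots+n_k$) then reproduces exactly the stated series. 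The paper, by contrast, explicitly avoids the $h_{N-1}$ representation (calling it not amenable to large-$N$ analysis) and works directly with Corollary~\ref{cor:shiftedMmtsExact}: substitute $\alpha_r=a_r/N$, take the termwise limit, expand each $e^{-ia_\ell}$ as an absolutely convergent series, and apply Lemma~\ref{LemmaProdSum} -- in particular \eqref{eq:LemmaZero} and \eqref{eq:lem:n0Term} -- to cancel the $n=0,1$ contributions and resum the $n\ge k+1$ terms into $h_{n-k-1}(a_1,\dots,a_k)\prod_j a_j$. Your route trades that combinatorial lemma for an asymptotic/integral evaluation; both are legitimate, but the paper's is shorter since Lemma~\ref{LemmaProdSum} is already available.

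As written, though, the proposal has concrete gaps. First, the premise that the individual pieces of Corollary~\ref{cor:shiftedMmtsExact}, divided by $N$, blow up like $N^{k-1}$ is incorrect: with $\alpha_r=a_r/N$ the product $\prod_{j\ne\ell}\frac{1-e^{-ia_j/N}}{e^{-ia_\ell/N}-e^{-ia_j/N}}$ is a ratio of two quantities each of order $N^{-(k-1)}$ and tends to $\prod_{j\ne\ell}\frac{a_j}{a_j-a_\ell}$, so every term has a finite limit for fixed distinct nonzero $a_j$; the only delicate point is the removable singularities at coinciding or vanishing $a_j$, which the combinatorial identities resolve. This mis-assessment is what pushed you off the direct route. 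Second, the decisive quantitative step of your route -- converting the $\ell$-sum with weight $(\ell+1)$ and the expansion of $h_{N-1-\ell}\bigl(e^{-ia_1/N},\dots,e^{-ia_k/N}\bigr)$ into the factor $\frac{m!}{(m+k+1)!}$ -- is precisely the step you defer, and the one identity you do quote is wrong: $\int_0^1 u^{k-1}(1-u)^m\,du=\frac{(k-1)!\,m!}{(k+m)!}$, not $\frac{(k-1)!\,m!}{(m+k+1)!}$; the denominator $(m+k+1)!$ arises from a $(k+1)$-dimensional Dirichlet/Beta integral, the extra dimension being supplied by the weight $(\ell+1)\sim Nv$, so the bookkeeping must be redone before the argument closes. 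Finally, the interchange of the $m$-expansion with the $N\to\infty$ limit, which you flag as the main obstacle, does need justification (uniformly for bounded $a_j$); in the paper's argument this issue does not arise because the limit is taken on a fixed finite sum of terms and only afterwards is the absolutely convergent series for $e^{-ia_\ell}$ rearranged.
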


\begin{proof}
    Set $\alpha_r = a_r/N$ in Corollary~\ref{cor:shiftedMmtsExact}, take the first two terms in the  expansion
    \[
    e^{-ia_r / N} = 1 - \frac{i a_r}{N} + O\left(\frac{1}{N^2}\right)
    \]
    and let $N\to\infty$ to see that
\[
\lim_{N \to \infty} \E_N\left[ \prod_{r=1}^k Z\left(\theta_N+\frac{a_r}{N}\right) \right]
= -i \sum_{\ell=1}^{k} \frac{e^{-i a_\ell} }{a_\ell} \prod_{\substack{j=1 \\ j \neq \ell}}^{k} \frac{a_j}{(a_j-a_\ell)} + 1 + i \sum_{\ell=1}^k \frac{1}{a_\ell}
\]
Now expand the $e^{-i a_\ell}$ terms as an absolutely convergent series
\[
\lim_{N \to \infty} \E_N\left[ \prod_{r=1}^k Z\left(\theta_N+\frac{a_r}{N}\right) \right]
= -i \sum_{n=0}^\infty \frac{(-i)^n}{n!}\sum_{\ell=1}^{k} \frac{a_\ell^n }{a_\ell} \prod_{\substack{j=1 \\ j \neq \ell}}^{k} \frac{a_j}{(a_j-a_\ell)} + 1 + i \sum_{\ell=1}^k \frac{1}{a_\ell}.
\]
Lemma~\ref{LemmaProdSum}, specifically equation~\eqref{eq:LemmaZero}, shows that the terms with $n=0$ perfectly cancel the $\sum_{\ell=1}^k \frac{1}{a_\ell}$ term at the end.

Equation~\eqref{eq:lem:n0Term} of the same lemma shows that for the $n=1$ terms, we have
\[
(-i)^2 \sum_{\ell=1}^k \prod_{\substack{j=1 \\ j\neq \ell}}^k \frac{a_j}{(a_j-a_\ell)} = -1
\]
which perfectly cancels the middle term of $+1$.

Finally, Lemma~\ref{LemmaProdSum} shows that the terms with $n \leq k$ vanish, and for $n \geq k+1$ we have
\begin{equation*}
\frac{(-i)^{n+1}}{n!} \sum_{\ell=1}^k a_\ell^{n-1} \prod_{\substack{j=1 \\ j\neq \ell}}^k  \frac{a_j}{(a_j-a_\ell)} = \frac{(-1)^{k+n} i^{n+1}}{n!} h_{n-k-1}(a_1,\dots,a_k) \prod_{j=1}^k a_j.
\end{equation*}

Combining these pieces together, the only terms that survive are those with $n\geq k+1$ and we have
\begin{align*}
\lim_{N \to \infty} \E_N\left[ \prod_{r=1}^k Z\left(\theta_N+\frac{a_r}{N}\right) \right]
&=  \sum_{n=k+1}^\infty \frac{(-1)^{k+n} i^{n+1}}{n!} h_{n-k-1}(a_1,\dots,a_k) \prod_{j=1}^k a_j \\
&=  \sum_{m=0}^\infty \frac{(-1)^{m} i^{k+m}}{(m+k+1)!} h_{m}(a_1,\dots,a_k)   \prod_{j=1}^k a_j
\end{align*}
as requried.
\end{proof}

\begin{corollary}
    For $n_1,\dots,n_k \in \mathbb{N}$, we have
    \[
    \lim_{N\to\infty} \frac{1}{N^{n_1+\dots+n_k}} E_N\left[Z^{(n_1)}(\theta_N) \dots Z^{(n_k)}(\theta_N)\right] = (-1)^{k+n_1+\dots+n_k} i^{n_1+\dots+n_k} \frac{n_1! \dots n_k!}{(n_1+\dots+n_k+1)!}
    \]
\end{corollary}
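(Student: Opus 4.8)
The plan is to recover the statement from Theorem~\ref{thm:CharPolyShiftsLeadingOrder} (restated just above) by comparing Taylor coefficients in the shift variables $a_1,\dots,a_k$. First I would Taylor expand each factor about $a_r=0$, writing
\[
Z\!\left(\theta_N+\frac{a_r}{N}\right) = \sum_{j\geq 0} \frac{Z^{(j)}(\theta_N)}{j!}\,\frac{a_r^{\,j}}{N^{\,j}},
\]
which is an everywhere-convergent expansion since $Z$ is a trigonometric polynomial. Multiplying the $k$ factors and collecting the monomial $a_1^{n_1}\cdots a_k^{n_k}$: because each variable $a_r$ occurs in only one factor, the coefficient of $a_1^{n_1}\cdots a_k^{n_k}$ in $\prod_{r=1}^k Z(\theta_N+a_r/N)$ is exactly $\frac{1}{N^{\,n_1+\dots+n_k}}\prod_{r=1}^k \frac{Z^{(n_r)}(\theta_N)}{n_r!}$. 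Applying $\E_N$ and then $\lim_{N\to\infty}$, and interchanging the limit with coefficient extraction (see the last paragraph), the left side of Theorem~\ref{thm:CharPolyShiftsLeadingOrder} contributes $\frac{1}{\prod_r n_r!}\,\lim_{N\to\infty} N^{-(n_1+\dots+n_k)}\,\E_N\!\big[\prod_r Z^{(n_r)}(\theta_N)\big]$, while the right side contributes the $a_1^{n_1}\cdots a_k^{n_k}$ coefficient of $\sum_{m\geq0}\frac{(-1)^m i^{k+m}}{(m+k+1)!}\,h_m(a_1,\dots,a_k)\prod_{j=1}^k a_j$.

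Second, I would read off that coefficient. The factor $\prod_{j=1}^k a_j$ lowers every exponent by one, so the coefficient of $a_1^{n_1}\cdots a_k^{n_k}$ in $h_m(a_1,\dots,a_k)\prod_j a_j$ equals the coefficient of $a_1^{n_1-1}\cdots a_k^{n_k-1}$ in $h_m$; since every degree-$m$ monomial in the complete homogeneous symmetric polynomial has coefficient $1$, this is $1$ precisely when $m=(n_1-1)+\dots+(n_k-1)=n_1+\dots+n_k-k$ (and $0$ otherwise, which also forces each $n_r\geq1$, consistent with $\NN$ being the positive integers here). Substituting $m=n_1+\dots+n_k-k$ gives the series coefficient $\frac{(-1)^{\,n_1+\dots+n_k-k}\,i^{\,n_1+\dots+n_k}}{(n_1+\dots+n_k+1)!}$, and using $(-1)^{-k}=(-1)^{k}$ this equals $\frac{(-1)^{\,k+n_1+\dots+n_k}\,i^{\,n_1+\dots+n_k}}{(n_1+\dots+n_k+1)!}$. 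Multiplying through by $\prod_r n_r!$ yields exactly the claimed formula; everything else is bookkeeping with factorials and signs.

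The one genuine point requiring care is the interchange of $\lim_{N\to\infty}$ with the extraction of Taylor coefficients in the $a_r$. I would justify this by writing each coefficient as an iterated Cauchy integral $\frac{1}{(2\pi i)^k}\oint\cdots\oint \E_N\!\big[\prod_r Z(\theta_N+a_r/N)\big]\prod_{r=1}^k \frac{da_r}{a_r^{\,n_r+1}}$ over fixed small circles, and invoking the uniformity in Theorem~\ref{thm:CharPolyShiftsLeadingOrder}: both the finite-$N$ quantity (given explicitly in Corollary~\ref{cor:shiftedMmtsExact} with $\alpha_r=a_r/N$) and its limit are holomorphic in $a_1,\dots,a_k$, so uniform convergence on compact subsets of $\mathbb{C}^k$ propagates to all mixed derivatives by the standard Weierstrass argument, letting one pass the limit through the contour integral. (An alternative, fully elementary route is to differentiate the explicit formula of Corollary~\ref{cor:shiftedMmtsExact} directly $n_r$ times in each $\alpha_r$, set $\alpha_r=0$, and track the top power of $N$; but this merely re-does the Vandermonde/$h_m$ cancellations already performed in the preceding lemma, so the generating-function route is cleaner.) I do not anticipate any obstacle beyond this interchange.
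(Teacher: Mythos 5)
Your proposal is correct and is essentially the paper's own argument: both extract the coefficient of $a_1^{n_1}\cdots a_k^{n_k}$ from the two sides of Theorem~\ref{thm:CharPolyShiftsLeadingOrder}, note that only the term $m=n_1+\dots+n_k-k$ survives on the right, and multiply through by $n_1!\cdots n_k!$. Your Cauchy-integral justification for swapping the $N\to\infty$ limit with coefficient extraction is a welcome extra precision that the paper leaves implicit, but it does not change the route.
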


\begin{proof}
On both sides of \eqref{eq:CharPolyShiftedAsympt} find the coefficient of $a_1^{n_1} \dots a_k^{n_k}$.

On the left-hand side that will be
\[
\lim_{N\to\infty} \E_N\left[ \prod_{r=1}^k \frac{1}{(n_r)!} Z^{(n_r)}(\theta_N) \frac{1}{N^{n_r}} \right]
\]
and on the right-hand side there will be exactly one term, which will be when $m=n_1+\dots+n_k-k$, with coefficient
\[
\frac{(-1)^{n_1+\dots+n_k-k} i^{n_1+\dots+n_k} }{(n_1+\dots+n_k+1)!}
\]
Combining these two pieces yields the corollary, which is the same statement as Theorem~\ref{thm:CharPolyDeriv}.
\end{proof}

\section{Deriving Conjecture~\ref{RatioMom}}\label{ProofConj3}
We are required to find
\begin{equation*}
    \sum_{0 < \gamma \leq T}  \zeta \left( \frac{1}{2} + i \gamma + \alpha_1 \right)\dots\zeta \left( \frac{1}{2} + i \gamma + \alpha_k \right)
\end{equation*}
where the $\alpha_j$ satisfy certain conditions as specified in Section~\ref{sect:Intro}.

We may use Cauchy's theorem to write
\[
   \sum_{0 < \gamma \leq T}  \zeta \left( \frac{1}{2} + i \gamma + \alpha_1 \right)\dots\zeta \left( \frac{1}{2} + i \gamma + \alpha_k \right) = \frac{1}{2 \pi i} \int_{R} \frac{\zeta '}{\zeta} (s) \zeta \left( s + \alpha_1 \right)\dots\zeta \left( s + \alpha_k \right) \ ds,
\]
where $R$ is a positively oriented rectangular contour with vertices $c+i$, $c+iT$, $1-c+iT$, and $1-c+i$, where $1/2 < c < 3/4$. For simplicity we assume that $|T-\gamma| \gg 1/\log T$ for any zero $\gamma$ and $T$ sufficiently large, although this constraint has no effect on the final answer.

Standard arguments show that the bottom, right, and top sides of the contour do not contribute anything except to the error term, with the largest contribution coming from the top piece of the contour at $O\left(T^{1/2+\varepsilon}\right)$.

We take the logarithmic derivative of the functional equation of $\zeta (s)$ 
\begin{equation*}
    \zeta (s) = \chi (s) \zeta (1-s)
\end{equation*}
to write
\[
\frac{\zeta '}{\zeta} (s) = \frac{\chi '}{\chi} (s) - \frac{\zeta '}{\zeta} (1-s).
\]
Using this we may rewrite the left-hand side of the contour as
\[
\frac{1}{2 \pi i} \int_{1-c+iT}^{1-c+i} \left( \frac{\chi '}{\chi} (s) - \frac{\zeta '}{\zeta} (1-s) \right) \zeta \left( s + \alpha_1 \right)\dots\zeta \left( s + \alpha_k \right) \ ds.
\]

For $t \geq 1$ and fixed $\sigma$, we have
\[
\frac{\chi '}{\chi}(s) = -\log \frac{t}{2\pi} + O\left( \frac{1}{t}\right),
\]
so this part of the integral contributes
\[
\frac{1}{2 \pi} \int_{1}^{T} \left( \log \frac{t}{2\pi} \right) \zeta \left( s + \alpha_1 \right)\dots\zeta \left( s + \alpha_k \right) \ ds
\]
(where we use the minus sign to flip the direction of the integral). Shifting the integral to the half-line at the cost of a small error term gives
\[
\frac{1}{2 \pi} \int_{1}^{T} \left( \log \frac{t}{2\pi} \right) \zeta \left( \frac{1}{2} + it + \alpha_1 \right)\dots\zeta \left( \frac{1}{2} + it + \alpha_k \right) \ ds + O\left( T^{1/2+\varepsilon} \right)
\]
and so upon taking the integral we have a contribution from this part of the left-hand side of the contour of
\[
\frac{T}{2\pi} \log \frac{T}{2\pi} + O\left( T^{1/2+\varepsilon} \right).
\]

Therefore we have shown that
\begin{multline}\label{eq:prodshiftedzeta}
\sum_{0 < \gamma \leq T}  \zeta \left( \frac{1}{2} + i \gamma + \alpha_1 \right)\dots\zeta \left( \frac{1}{2} + i \gamma + \alpha_k \right) \\
=  \frac{1}{2 \pi i} \int_{c+i}^{c+i T}\frac{\zeta '}{\zeta} (s) \zeta (1-s + \alpha_1) \dots \zeta (1-s + \alpha_k) \ ds + \frac{T}{2\pi} \log \frac{T}{2\pi} + O\left(T^{1/2+\varepsilon}\right).
\end{multline}

The remaining integral can be rewritten as
\begin{equation*}
\left. \frac{d}{d \delta} \frac{1}{2 \pi} \int_{1}^{T} \frac{\zeta (c+it + \delta)}{\zeta (c+it)}  \zeta (1-c-it + \alpha_1) \dots \zeta (1-c-it + \alpha_k) \ dt \right|_{\delta=0}
\end{equation*}
for some small shift $\delta$.

We now apply the recipe from the Ratios Conjecture, where the steps are outlined in Section \ref{sect:RatHis}. Step 1 tells us that for the $\zeta (s)$ term in the denominator, we replace it with its Dirichlet series
\[
\frac{1}{\zeta (s)} = \sum_{n=1}^{\infty} \frac{\mu (n)}{n^s}
\]
noting that, assuming the Riemann Hypothesis, this is conditionally convergent on the line of integration. Step 2 tells us that for any $\zeta (s)$ terms in the numerator, we replace them using the approximate functional equation
\[
\zeta (s) = \sum_{n \leq \sqrt{t/2 \pi}} \frac{1}{n^s} + \chi (s) \sum_{n \leq \sqrt{t/2 \pi}} \frac{1}{n^{1-s}}
\]
ignoring the error terms, obtaining
\begin{multline}\label{eq:shiftedzetaint}
	\frac{d}{d \delta} \frac{1}{2 \pi} \int_{1}^{T}   \sum_h \frac{\mu(h)}{h^{c+it}}  \left(\sum_m \frac{1}{m^{c+it+\delta}} + \chi (c+it+\delta) \sum_m \frac{1}{m^{1-c-it-\delta}}\right) \\ \times \left. \prod_{r=1}^k \left(\sum_{n_r} \frac{1}{n_r^{1-c-it+\alpha_r}} + \chi (1-c-it+\alpha_r) \sum_{n_r} \frac{1}{n_r^{c+it-\alpha_r}}\right)  \ dt \right|_{\delta=0}
\end{multline}

Step 3 of the Ratio's Conjecture tells us to now expand the numerator, yielding $2^{k+1}$ terms coming from picking either the sum without a $\chi$ factor or the sum with a $\chi$ factor in each zeta function.

To illustrate the next couple of steps in the Ratios Conjecture, we will concentrate on just one of the $2^{k+1}$ possible terms; we will consider the term were we keep every piece from each of the approximate functional equations without a $\chi (s)$ factor. This gives
\[
\sum_h \frac{\mu(h)}{h^{c+it}} \sum_m \frac{1}{m^{c+it+\delta}}  \sum_{n_1} \frac{1}{n_1^{1-c-it+\alpha_1}} \dots\sum_{n_k} \frac{1}{n_k^{1-c-it+\alpha_k}} .
\]
Step 4 tells us that when we integrate over $t$ we want to keep only the ``diagonal terms'', which are the terms in the sum where there are no oscillations in $t$, namely where
\[
hm = n_1\dots n_k.
\]
The ratios methodology assumes all the non-diagonal terms oscillate away, and so we are left with
\[
\sum_{hm = n_1\dots n_k} \frac{\mu(h)}{h^{1/2} m^{1/2+\delta} n_1^{1/2+\alpha_1} \dots n_k^{1/2+\alpha_k}}.
\]

Since this sum is multiplicative, we may write this as an Euler product. Writing
\[
h = p^b, m = p^d, n_j = p^{a_j}
\]
we have
\[
\prod_p \sum_{b+d = a_1 + \dots + a_k} \frac{\mu (p^b)}{p^{b/2} p^{d(1/2+\delta)} p^{a_1(1/2+\alpha_1)} \dots p^{a_k(1/2+\alpha_k)}}.
\]
We may only have $b=0$ (in which case $\mu(1) =1$), or $b=1$ (in which case $\mu(p) = -1$).

Summing over both possibilities for $b$ gives
\begin{multline*}
    \prod_p \sum_{a_1=0}^{\infty}\dots\sum_{a_k=0}^{\infty} \frac{1}{p^{a_1(1+\alpha_1+\delta)}} \dots \frac{1}{p^{a_k(1+\alpha_k+\delta)}} \\
    - \prod_p p^\delta \left( \sum_{a_1=0}^{\infty}\dots\sum_{a_k=0}^{\infty} \frac{1}{p^{a_1(1+\alpha_1+\delta)}} \dots \frac{1}{p^{a_k(1+\alpha_k+\delta)}} -1 \right).
\end{multline*}

Factorising  and using the sum of the geometric formula gives
\begin{equation*}
    \prod_p \frac{1}{1 - p^{-(1+\alpha_1+\delta)}} \dots \frac{1}{1-p^{-(1+\alpha_k+\delta)}} \left(1 - p^\delta( 1-(1-p^{-(1+\alpha_1+\delta)})\dots(1-p^{-(1+\alpha_k+\delta)}) \right).
\end{equation*}

The Euler product for $\zeta (s)$ allows us to rewrite this as
\begin{multline*}
\zeta (1+\alpha_1 + \delta)  \dots \zeta (1+\alpha_k +\delta) \\
\prod_p  \left( 1-p^{-(1+\alpha_1)} - \dots -p^{-(1+\alpha_k)} + p^{-(2+\alpha_1 + \alpha_2 + \delta)} + \dots+ p^{-(2+\alpha_{k-1} + \alpha_k + \delta)} \right. \\
\left.+\dots + (-1)^k p^{-(k + \alpha_1 +\dots+ \alpha_k +(k-1)\delta)} \right).
\end{multline*}

Finally, in Step 5, to make this product over primes converge we take out factors of $\zeta(1+ \alpha_j)$ for each $j=1,\dots,k$. In doing so, we have
\begin{equation*}
Z_{\alpha_1,\dots,\alpha_k,\delta} = \frac{  \zeta (1+\alpha_1 + \delta)  \dots \zeta (1+\alpha_k +\delta)}{\zeta (1+\alpha_1)  \dots \zeta (1+\alpha_k)}  \prod_p \frac{1 - p^{-(1+\alpha_1)} +\dots+ (-1)^k p^{-(k + \alpha_1 +\dots+ \alpha_k +(k-1)\delta)}}{(1-p^{-(1+\alpha_1)})\dots(1-p^{-(1+\alpha_k)})}.
\end{equation*}
For later simplicity, noticing the symmetry in the parameters $\alpha_1,\dots,\alpha_k$, we will denote the product over primes in the previous line by
\[
A_{\{\alpha_1,\dots,\alpha_k\}}(\delta) :=  \prod_p \frac{1 - p^{-(1+\alpha_1)} +\dots+ (-1)^k p^{-(k + \alpha_1 +\dots+ \alpha_k +(k-1)\delta)}}{(1-p^{-(1+\alpha_1)})\dots(1-p^{-(1+\alpha_k)})} .
\]

We now return to the numerator in \eqref{eq:shiftedzetaint} when the zeta terms have been replaced by the appropriate approximate functional equation.  The ratios methodology tells us to keep only the terms with an equal number of $\chi (s)$ and $\chi (1-s)$ terms, since all the other terms will be  highly oscillatory, and hence are assumed to not contribute to the final answer.

Other than the case when there are no $\chi$ terms (dealt with above), it is clear from \eqref{eq:shiftedzetaint} that the only such terms will come from the single $\chi(c+it+\delta)$ factor, and the $\chi(1-c-it+\alpha_j)$ factors for each $j$, giving us the ``one-swap'' cases. This also explains why there are no larger swaps as we don't have the appropriate number of $\chi$-factors to cancel.

Note that
\[
\chi (c+it+\delta) \chi (1-c-it+\alpha_j) = \left( \frac{t}{ 2\pi} \right)^{-\delta - \alpha_j} \left( 1 + O \left(\frac{1}{|t|} \right) \right)
\]
and the rest of the calculation (namely, the summing over the diagonal terms) proceeds as in the ``zero-swap'' case, given above. Thus the term arising from the swap of $\alpha_j$ with $\delta$ contributes
\[
\left( \frac{t}{2 \pi} \right)^{-\alpha_j - \delta} Z_{ \alpha_1,\dots, \alpha_{j-1}, -\delta, \alpha_{j+1},\dots,\alpha_k,-\alpha_j} .
\]

Inserting these results into \eqref{eq:prodshiftedzeta}, overall we have
\begin{multline*}
\sum_{0 < \gamma \leq T}  \zeta \left( \frac{1}{2} + i \gamma + \alpha_1 \right)\dots\zeta \left( \frac{1}{2} + i \gamma + \alpha_k \right) = \\
 \left. \frac{d}{d \delta} \frac{1}{2 \pi} \int_{1}^{T} Z_{\alpha_1,\dots,\alpha_k,\delta} + \left( \frac{t}{2 \pi} \right)^{-\alpha_1 - \delta} Z_{-\delta, \alpha_2,\dots,\alpha_k,-\alpha_1} +\dots + \left( \frac{t}{2 \pi} \right)^{-\alpha_k - \delta} Z_{ \alpha_1,\dots,-\delta, -\alpha_k} dt  \right|_{\delta=0} \\
+\frac{T}{2\pi}\log\frac{T}{2\pi} + O\left(T^{1/2 +\varepsilon}\right),
\end{multline*}
where
\[
 Z_{\alpha_1,\dots,\alpha_k,\delta} = \frac{  \zeta (1+\alpha_1 + \delta)  \dots \zeta (1+\alpha_k +\delta)}{\zeta (1+\alpha_1)  \dots \zeta (1+\alpha_k)} A_{\{\alpha_1,\dots,\alpha_k\}}(\delta),
\]
completing the derivation of Conjecture~\ref{RatioMom}.

\section{Recovering Conjectures \ref{MixMom} and \ref{kthMom}}\label{LeadingOrder}
We aim to find the coefficient of $\alpha_1^{n_1} \dots \alpha_k^{n_k}$, or rather its leading order in terms of the largest power of $L = \log\frac{t}{2\pi}$.

We proceed to pull the delta derivative inside the integral. Note that
\[
\left. \frac{d}{d \delta} Z_{\alpha_1,\dots,\alpha_k,\delta} \right|_{\delta=0} =  A'_{\{\alpha_1,\dots,\alpha_k\}}(0) + \sum_{j=1}^k \frac{\zeta'(1+\alpha_j)}{\zeta(1+\alpha_j)}.
\]
Due to the fact that $\left. \frac{1}{\zeta(1-\delta)} \right|_{\delta=0} = 0$ and $\lim_{\delta\to0} \frac{\zeta'}{\zeta^2}(1-\delta)= -1$ we have
\begin{align*}
&\left. \frac{d}{d \delta} \left(\frac{t}{2 \pi} \right)^{-\alpha_1-\delta} Z_{-\delta,\alpha_2,\dots,\alpha_k,-\alpha_1} \right|_{\delta=0} \\
&= -\left(\frac{t}{2 \pi} \right)^{-\alpha_1} Z_{0,\alpha_2,\dots,\alpha_k,-\alpha_1} -\left(\frac{t}{2 \pi} \right)^{-\alpha_1} \zeta(1-\alpha_1) A_{\{0,\alpha_2,\dots,\alpha_k\}}(-\alpha_1) \prod_{j=2}^k \frac{\zeta(1+\alpha_j - \alpha_1)}{\zeta(1+\alpha_j)} \\
&= -\left(\frac{t}{2 \pi} \right)^{-\alpha_1} \zeta(1-\alpha_1) A_{\{0,\alpha_2,\dots,\alpha_k\}}(-\alpha_1) \prod_{j=2}^k \frac{\zeta(1+\alpha_j - \alpha_1)}{\zeta(1+\alpha_j)}
\end{align*}
where the first term in the second line vanishes (again due to the fact that $\left. \frac{1}{\zeta(1-\delta)} \right|_{\delta=0} = 0$). Therefore, if we let
\begin{multline*}
    W_{\{\alpha_1,\dots,\alpha_k\}\setminus \{ \alpha_j \}}(\alpha_j,t) = \frac{\zeta'(1+\alpha_j)}{\zeta(1+\alpha_j)}  \\
    - \left(\frac{t}{2 \pi} \right)^{-\alpha_j} \zeta(1-\alpha_j) A_{\{\alpha_1,\dots \alpha_{j-1}, 0 , \alpha_{j+1}, \dots ,\alpha_k\}}(-\alpha_j) \prod_{\substack{ \ell = 1\\ \ell \neq j}}^k \frac{\zeta(1+\alpha_\ell - \alpha_j)}{\zeta(1+\alpha_\ell)}
\end{multline*}
then we see that the Ratios Conjecture yields
\begin{multline}\label{eq:RatConW}
\sum_{0 < \gamma \leq T}  \zeta \left( \frac{1}{2} + i \gamma + \alpha_1 \right)\dots\zeta \left( \frac{1}{2} + i \gamma + \alpha_k \right) = \\
= \frac{1}{2 \pi} \int_{1}^{T} \left( A'_{\{\alpha_1,\dots,\alpha_k\}}(0) + \sum_{j=1}^k W_{\{\alpha_1,\dots,\alpha_k\}\setminus \{ \alpha_j \}}(\alpha_j,t)\right) dt + \frac{T}{2\pi} \log \frac{T}{2\pi}
+ O\left(T^{1/2 +\varepsilon}\right).
\end{multline}

We will follow the Ratios methodology, which is to replace each arithmetic piece by its leading order, which in our case means
\[
A_{\{\alpha_1,\dots,\alpha_k\}}(0) \sim 1.
\]
We also replace each zeta function by its expansion about its pole, that is,
\[
\zeta (1+x) = \frac{1}{x} + O(1)
\]
and
\[
\frac{\zeta '}{\zeta} (1+x) = -\frac{1}{x} + O(1).
\]
Writing $L=\log\frac{t}{2\pi}$ we have
\[
\left(\frac{t}{2 \pi} \right)^{-\alpha_1} = 1 - L \alpha_1 + \frac{1}{2!} L^2 \alpha_1^2 + \dots.
\]

Combing these expansions, we have 
\begin{equation*}
W_{\{\alpha_1,\dots,\alpha_k\}\setminus \{ \alpha_\ell \}}(\alpha_\ell,t) \sim -\frac{1}{\alpha_\ell} + \left(1 - L \alpha_1 + \frac{1}{2!} L^2 \alpha_1^2 + \dots \right) \frac{1}{\alpha_\ell} \prod_{\substack{1\leq j \leq k \\ j \neq \ell}} \frac{\alpha_j}{\alpha_j-\alpha_\ell}.
\end{equation*}

Returning to \eqref{eq:RatConW}, we see that
\begin{multline*}
\sum_{0 < \gamma \leq T} \zeta \left( \frac{1}{2} + i \gamma + \alpha_1 \right)\dots\zeta \left( \frac{1}{2} + i \gamma + \alpha_k \right) \\
\sim  \frac{1}{2 \pi} \int_{1}^{T} \left( - \sum_{\ell=1}^k \frac{1}{\alpha_\ell} + \sum_{\ell=1}^k \left(1 - L \alpha_1 + \frac{1}{2!} L^2 \alpha_1^2 + \dots \right) \frac{1}{\alpha_\ell} \prod_{\substack{1\leq j \leq k \\ j \neq \ell}} \frac{\alpha_j}{\alpha_j-\alpha_\ell} \right) dt + \frac{T}{2\pi} \log\frac{T}{2\pi}.
\end{multline*}

In a very similar manner to the combinatorial arguments given in Section~\ref{sect:RMTMot}, we can see by Lemma \ref{LemmaProdSum}, equation~\eqref{eq:LemmaZero}, that the poles from the sums over $\ell$, where we take the $1$-term in the bracket containing the logarithms, cancel perfectly. This means that we have to evaluate
\begin{align*}
\sum_{0 < \gamma \leq T}  \zeta \left( \frac{1}{2} + i \gamma + \alpha_1 \right)&\dots\zeta \left( \frac{1}{2} + i \gamma + \alpha_k \right) \\
\sim  \frac{1}{2 \pi}& \int_{1}^{T} \sum_{\ell=1}^k \left( - L \alpha_1 + \frac{1}{2!} L^2 \alpha_1^2 + \dots \right) \frac{1}{\alpha_\ell} \prod_{\substack{1\leq j \leq k \\ j \neq \ell}} \frac{\alpha_j}{\alpha_j-\alpha_\ell} \ dt + \frac{T}{2\pi} \log\frac{T}{2\pi}.
\end{align*}

To find the coefficient of the $\alpha_\ell^n$ term, say, we need to take the $(-1)^{n+1}\alpha_\ell^{n+1}/(n+1)!$ term for each $\alpha_\ell$ from the bracket containing the logarithms, as one $\alpha_\ell$ will cancel with the one in the denominator for each $\ell$. Summing over all such $\ell$, we have
\begin{equation*}
\sum_{0 < \gamma \leq T}  \zeta \left( \frac{1}{2} + i \gamma + \alpha_1 \right)\dots\zeta \left( \frac{1}{2} + i \gamma + \alpha_k \right)
\sim \frac{1}{2 \pi} \int_{1}^{T} \left( \frac{(-1)^{n+1}}{(n+1)!} L^{n+1} \right) \sum_{\ell=1}^k \alpha_\ell^n \prod_{ \substack{j=1 \\ j \neq \ell}}^{k} \frac{\alpha_j}{\alpha_j-\alpha_\ell} \ dt.
\end{equation*}

By Lemma \ref{LemmaProdSum}, for $n \geq k$, this is
\begin{multline*}
\sum_{0 < \gamma \leq T}  \zeta \left( \frac{1}{2} + i \gamma + \alpha_1 \right)\dots\zeta \left( \frac{1}{2} + i \gamma + \alpha_k \right) \\
\sim \frac{1}{2 \pi} \int_{1}^{T} \left( \frac{(-1)^{n+1}}{(n+1)!} L^{n+1} \right) (-1)^{k+1} h_{n-k}(\alpha_1,\dots,\alpha_k) \prod_{j=1}^k \alpha_j \ dt.
\end{multline*}

Since $n=n_1+\dots +n_k$, and writing $ h_{n-k}(\alpha_1,\dots,\alpha_k) \prod_{j=1}^k \alpha_j $ out as a sum, we have
\begin{multline*}
\sum_{0 < \gamma \leq T}  \zeta \left( \frac{1}{2} + i \gamma + \alpha_1 \right)\dots\zeta \left( \frac{1}{2} + i \gamma + \alpha_k \right) \\
\sim \frac{1}{2 \pi} \int_{1}^{T} \left( \frac{(-1)^{n_1+\dots n_k+k}}{(n_1+\dots n_k+1)!} L^{n_1+\dots n_k+1} \right) \sum_{\substack{n_1 \geq 1,\dots,n_k \geq 1 \\ n_1 + \dots + n_k = n}} \alpha_1^{n_1} \alpha_2^{n_2} \dots \alpha_k^{n_{k}} \ dt.
\end{multline*}

Finally, Taylor expand the left-hand side around each $\alpha_\ell$, differentiate with respect of each $\alpha_\ell$ $n_\ell$-times and set $\alpha_\ell=0$. Multiplying through by $n_\ell!$ for each $\ell$ gives
\begin{multline*}
\sum_{0 < \gamma \leq T} \zeta^{(n_1)} \left( \frac{1}{2} + i \gamma \right) \dots \zeta^{(n_k)} \left( \frac{1}{2} + i \gamma \right) \\
\sim \frac{1}{2 \pi} \int_{1}^{T} \left( (-1)^{n_1+\dots n_k+k} \frac{n_1!\dots n_k!}{(n_1+\dots n_k+1)!} L^{n_1+\dots n_k+1} \right) \ dt.
\end{multline*}

Integrating gives Conjecture \ref{MixMom} and additionally, setting all $n_\ell$ equal to $n$ gives Conjecture \ref{kthMom}.

\section{Examples}\label{Examples}
\subsection{Recovering the Shanks' conjecture asymptotic ($k=1$)} \label{ShanksInt} \hfill

For the case $k=1$ we will recover the full asymptotic for the Generalised Shanks' Conjecture in an integral form, including the lower order terms, which is equivalent to the asymptotic found in \cite{HugPC22} for all derivatives.

By \eqref{eq:RatConW} we can write
\begin{equation}\label{eq:Shanks}
\sum_{0 < \gamma \leq T}  \zeta \left(\frac{1}{2} + i \gamma + \alpha \right) = \frac{1}{2 \pi} \int_{1}^{T} \left( A'_{\{\alpha\}}(0) +W(\alpha,t) \right) \ dt + \frac{T}{2\pi} \log\frac{T}{2\pi} + O\left(T^{1/2 +\varepsilon}\right)
\end{equation}
where
\[
W(\alpha,t) = \frac{\zeta'(1+\alpha)}{\zeta(1+\alpha)}  - \left(\frac{t}{2 \pi} \right)^{-\alpha} \zeta(1-\alpha) A_{\{0\}}(-\alpha).
\]
Now by \eqref{eq:A}, we see that
\[
A_{\{\alpha\}}(0)= \prod_p \frac{(1-p^{-(1+\alpha)})}{(1-p^{-(1+\alpha)})} = 1
\]
and so $ A'_{\{\alpha\}}(0)=0$. Then \eqref{eq:Shanks} can be simplified to
\begin{equation}\label{eq:Shanks2}
    \sum_{0 < \gamma \leq T}  \zeta \left(\frac{1}{2} + i \gamma + \alpha \right) = \frac{1}{2 \pi} \int_{1}^{T} \left( \frac{\zeta'(1+\alpha)}{\zeta(1+\alpha)}  - \left(\frac{t}{2 \pi} \right)^{-\alpha} \zeta(1-\alpha) \right) \  dt + \frac{T}{2\pi} \log\frac{T}{2\pi} + O\left(T^{1/2 +\varepsilon}\right).
\end{equation}

The Laurent expansion for $\frac{\zeta' (1+\alpha)}{\zeta (1+\alpha)}$ about $\alpha = 0$ is given by
\[
\frac{\zeta' (1+\alpha)}{\zeta (1+\alpha)} = - \frac{1}{\alpha} + \sum_{j=0}^{\infty} A_j \alpha^j,
\]
the Laurent expansion for $-\zeta (1-\alpha)$ about $\alpha=0$ gives
\[
- \zeta (1-\alpha) = \frac{1}{\alpha} - \sum_{j=0}^{\infty} \frac{\gamma_j}{j!} \alpha^j,
\]
and the Taylor expansion of $(t/2 \pi)^{-\alpha}$ about $\alpha=0$ gives
\[
\left( \frac{t}{2 \pi} \right)^{-\alpha} = \sum_{j=0}^{\infty} \frac{(-1)^j \alpha^j L^j}{j!},
\]
where $L=\log (t/2\pi)$, the $\gamma_j$ are the Stieltjes coefficients from the expansion of $\zeta (s)$ about $s=1$, and $A_n$ is the $n$\textsuperscript{th} coefficient from the expansion of $\zeta '(s)/\zeta (s)$ about $s=1$.

Substitute these series into \eqref{eq:Shanks2} and simplify to give
\begin{align*}
    \sum_{0 < \gamma \leq T}&  \zeta \left(\frac{1}{2} + i \gamma + \alpha \right) \\
    =\frac{1}{2 \pi}& \int_{1}^{T} \left( - \frac{1}{\alpha} + \sum_{j=0}^{\infty} A_j \alpha^j \right) + \left( \sum_{j=0}^{\infty} \frac{(-1)^j \alpha^j L^j}{j!} \right) \left( \frac{1}{\alpha} - \sum_{j=0}^{\infty} \frac{\gamma_j}{j!} \alpha^j \right)  \ dt + \frac{T}{2\pi} \log\frac{T}{2\pi} + O\left(T^{1/2 +\varepsilon}\right) \\
    =\frac{1}{2 \pi}& \int_{1}^{T} \sum_{j=0}^{\infty} A_j \alpha^j + \sum_{j=1}^{\infty} \frac{(-1)^j L^j \alpha^{j-1}}{j!} + \sum_{j=0}^{\infty} \sum_{m=0}^{j} \frac{(-1)^{m+1} L^m \gamma_{j-m} \alpha^j}{m! (j-m)!} \ dt + \frac{T}{2\pi} \log\frac{T}{2\pi} + O\left(T^{1/2 +\varepsilon}\right).
\end{align*}

For the general $n$\textsuperscript{th} derivative of $\zeta (s)$, where $n \in \mathbb{N}$, we have after Taylor expanding $\zeta (\rho + \alpha)$ about $\alpha =0$, differentiating $n$ times, setting $\alpha=0$, and multiplying through by $n!$, the following formula
\[
\sum_{0 < \gamma \leq T} \zeta ^{(n)} \left(\frac{1}{2} + i \gamma \right) = \frac{n!}{2 \pi} \int_{1}^{T} A_n + \frac{(-1)^{n+1} L^{n+1}}{(n+1)!} + \sum_{m=0}^{n} \frac{(-1)^{m+1} L^m \gamma_{n-m}}{m! (n-m)!} \ dt + O\left(T^{1/2 +\varepsilon}\right),
\]
which is equivalent to the main result from Hughes and Pearce-Crump~\cite{HugPC22} (which can be seen by integrating the above equation).

\subsection{The second moment of $\zeta ' (1/2 + i\gamma)$ $(k=2)$} \label{2ndMom} \hfill

To calculate the full asymptotic for the second moment of the first derivative of the Riemann zeta function, begin with \eqref{eq:RatConW} with two shifts $\alpha,\beta$ to write
\begin{multline*}
\sum_{0 < \gamma \leq T}  \zeta \left(\frac{1}{2} + i \gamma + \alpha \right) \zeta \left( \frac{1}{2} + i \gamma + \beta \right) \\
= \frac{1}{2 \pi} \int_{1}^{T} \left( A'_{\{\alpha,\beta\}}(0) +  \frac{\zeta'(1+\alpha)}{\zeta(1+\alpha)} - \left(\frac{t}{2 \pi} \right)^{-\alpha} \zeta(1-\alpha) A_{\{0,\beta\}}(-\alpha) \frac{\zeta(1+\beta - \alpha)}{\zeta(1+\beta)} \right. \\
\hphantom{spacespacespacespa} \left. + \frac{\zeta'(1+\beta)}{\zeta(1+\beta)} - \left(\frac{t}{2 \pi} \right)^{-\beta}\zeta(1-\beta) A_{\{\alpha,0\}}(-\beta) \frac{\zeta(1+\alpha - \beta)}{\zeta(1+\alpha)}  \right) \ dt \\
+ \frac{T}{2\pi} \log\frac{T}{2\pi} + O\left(T^{1/2 +\varepsilon}\right).
\end{multline*}

Note that by \eqref{eq:A} we have that the arithmetic part can be written as
\[
A_{\{\alpha, \beta\}}(\delta) = \prod_p \frac{1-p^{-(1+\alpha)}-p^{-(1+\beta)}+p^{-(2+\alpha+\beta+\delta)}}{\left(1-p^{-(1+\alpha)} \right) \left(1-p^{-(1+\beta)} \right)}.
\]
By adding `zero', we have
\begin{align}
    A_{\{\alpha, \beta\}}(\delta) &= \prod_p \frac{1-p^{-(1+\alpha)}-p^{-(1+\beta)}+p^{-(2+\alpha+\beta)}}{\left(1-p^{-(1+\alpha)} \right) \left(1-p^{-(1+\beta)} \right)} + \prod_p \frac{p^{-(2+\alpha+\beta+\delta)}-p^{-(2+\alpha+\beta)}}{\left(1-p^{-(1+\alpha)} \right) \left(1-p^{-(1+\beta)} \right)} \notag \\
    &= 1 + \prod_p \frac{p^{-(2+\alpha+\beta+\delta)}-p^{-(2+\alpha+\beta)}}{\left(1-p^{-(1+\alpha)} \right) \left(1-p^{-(1+\beta)} \right)} \label{eq:Arith2ndMom}
\end{align}
which is the in a nicer form to work with.

We now employ a computer package to perform the expansions. We substitute the Laurent expansion for $\zeta (s)$ about $s=1$, which is where the various Stieltjes constants will come from. 

Then for $L = \log \frac{t}{2\pi}$ and $\rho=1/2 + i\gamma$ a non-trivial zero of the Riemann zeta function,
\begin{align}
    &\sum_{0 < \gamma \leq T} \zeta ' \left(\frac{1}{2} + i \gamma \right)^2 = \notag \\
    &\hphantom{spa}\frac{1}{2 \pi} \int_{1}^{T} \Bigg( \frac{1}{6} L^3 A^{(0,0,0)} +\frac{1}{2} L^2 \left(2 \gamma_0 A^{(0,0,0)}+ A^{(0,0,1)}+A^{(0,1,0)}\right) \notag \\
   &\hphantom{spac}+\frac{1}{2} L \left(-8 \gamma_1 A^{(0,0,0)} + 4 \gamma_0 A^{(0,0,1)}+A^{(0,0,2)}+4 \gamma_0  A^{(0,1,0)}+2A^{(0,1,1)}-A^{(0,2,0)} \right) \notag \\
   &\hphantom{spaces}+\frac{1}{6}
   \left(-12 \gamma_0^3 A^{(0,0,0)} -36 \gamma_0 \gamma_1 A^{(0,0,0)} +6 \gamma_2 A^{(0,0,0)} -24 \gamma_1 A^{(0,0,1)}   \right. \notag \\
   &\hphantom{spacespa} +6 \gamma_0  A^{(0,0,2)} +A^{(0,0,3)} +12 \gamma_0  A^{(0,1,1)} +3 A^{(0,1,2)}-3 A^{(0,2,1)}+6 A^{(1,1,1)}  \notag \\
   &\hphantom{spacespace} -12 \gamma_1 A^{(0,1,0)} -12 \gamma_1 A^{(1,0,0)} +6 \gamma_0^2 A^{(0,1,0)} -6\gamma_0  A^{(0,2,0)} +A^{(0,3,0)} -3 A^{(2,1,0)} \notag \\
   &\hphantom{spacespacesp}  \left.  -6 \gamma_0^2 A^{(1,0,0)} +12 \gamma_0 A^{(1,1,0)}-3A^{(1,2,0)}\right) \Bigg) \ dt   + O\left( T^{1/2 +\varepsilon} \right)
\end{align}
where the $\gamma_m$ are the coefficients in the Laurent expansion of $\zeta (s)$ about $s=1$ and the $A^{(i,j,k)}$ terms are various products over primes, where we are using the notation
\[
A^{(i,j,k)} = \frac{\partial^i}{\partial \alpha^i} \frac{\partial^j}{\partial \beta^j}\frac{\partial^k}{\partial \delta^k} A_{\{\alpha, \beta\}}(\delta)\Bigg|_{\alpha=\beta=\delta=0}
\]
for complex numbers (shifts) $\alpha, \beta, \delta$ satisfying the conditions of Conjecture~\ref{RatioMom}.

We now simplify this result to state a final version of the conjecture for this moment. Clearly, say from \eqref{eq:Arith2ndMom}, if we do not differentiate with respect to $\delta$ and set $\delta=0$, then the arithmetic piece equals $1$. If we then differentiate with respect to $\alpha$ or $\beta$, and then set these equal to $0$, these terms disappear. That is, for $i,j >0$,
\[
A^{(i,j,0)} = 0,
\]
while if we don't differentiate at all (that is, $i=j=k=0$) and set $\alpha,\beta,\delta=0$, we have
$$A^{(0,0,0)}=1.$$
It is also clear that the arithmetic piece is symmetric in the first two derivatives, so
\[
A^{(i,j,k)} = A^{(j,i,k)}.
\]
This means that whenever we see these terms, we can combine them together which is why the conjecture does not appear symmetric in $i,j,k$.

After using these rules for $A^{(i,j,k)}$, we form the conjecture for the second moment of the first derivative of the Riemann zeta function, initially stated as Conjecture \ref{conj:secmom} in this paper and restated here for convenience.

\begin{conjecture*}
    Assume the Riemann Hypothesis. For $L = \log \frac{t}{2\pi}$ and $\rho=1/2 + i\gamma$ a non-trivial zero of the Riemann zeta function,
\begin{multline*}
\sum_{0 < \gamma \leq T} \zeta ' \left(\frac{1}{2} + i \gamma \right)^2 = \\
\frac{1}{2 \pi} \int_{1}^{T} \Big( \frac{1}{6}L^3 + \frac{1}{2}L^2 \left(2\gamma_0 +A^{(0,0,1)}  \right)  + \frac{1}{2} L \left( -8\gamma_1 + 4\gamma_0 A^{(0,0,1)} + A^{(0,0,2)}  + 2A^{(0,1,1)} \right) \\
+ \frac{1}{6} \left( -12\gamma_0^3 - 36\gamma_0 \gamma_1 + 6\gamma_2 -24 \gamma_1 A^{(0,0,1)} + 6 \gamma_0 A^{(0,0,2)} + A^{(0,0,3)}  \right. \\
  + 12 \gamma_0 A^{(0,1,1)} + 3 A^{(0,1,2)} -3 A^{(0,2,1)} +6 A^{(1,1,1)}  \Big) \ dt + O\left( T^{1/2 +\varepsilon} \right)
\end{multline*}
where the $\gamma_m$ are the coefficients in the Laurent expansion of $\zeta (s)$ about $s=1$ and the $A^{(i,j,k)}$ are arithmetic terms that are various products and sums over primes.
\end{conjecture*}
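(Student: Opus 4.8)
The plan is to specialise the Ratios-Conjecture output \eqref{eq:RatConW} to $k=2$, with shifts $\alpha$ and $\beta$, and then read off the coefficient of $\alpha\beta$. Explicitly, \eqref{eq:RatConW} becomes
\[
\sum_{0<\gamma\le T}\zeta\!\left(\tfrac12+i\gamma+\alpha\right)\zeta\!\left(\tfrac12+i\gamma+\beta\right)
=\frac{1}{2\pi}\int_1^T\!\Big(A'_{\{\alpha,\beta\}}(0)+W_1+W_2\Big)\,dt+\frac{T}{2\pi}\log\frac{T}{2\pi}+O\!\left(T^{1/2+\varepsilon}\right),
\]
where $W_1=\frac{\zeta'}{\zeta}(1+\alpha)-\left(\tfrac{t}{2\pi}\right)^{-\alpha}\zeta(1-\alpha)A_{\{0,\beta\}}(-\alpha)\frac{\zeta(1+\beta-\alpha)}{\zeta(1+\beta)}$ and $W_2$ is obtained by swapping $\alpha\leftrightarrow\beta$. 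Since the term $\frac{T}{2\pi}\log\frac{T}{2\pi}$ has no dependence on $\alpha$ or $\beta$, it is annihilated by $\partial_\alpha\partial_\beta$ and contributes nothing to the second moment; this is why no such term appears in the statement. I would also rewrite the arithmetic factor using the ``add zero'' identity \eqref{eq:Arith2ndMom}, $A_{\{\alpha,\beta\}}(\delta)=1+\prod_p\frac{p^{-(2+\alpha+\beta+\delta)}-p^{-(2+\alpha+\beta)}}{(1-p^{-(1+\alpha)})(1-p^{-(1+\beta)})}$, which makes transparent that its $\delta=0$ value is $1$ and confines all the $\delta$-dependence to an absolutely convergent prime product.

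Next I would substitute the standard expansions about the pole: $\zeta(1+x)=\tfrac1x+\sum_{n\ge0}\tfrac{(-1)^n}{n!}\gamma_n x^n$ from \eqref{eq:zetalaur}, $\tfrac{\zeta'}{\zeta}(1+x)=-\tfrac1x+\sum_{n\ge0}A_n x^n$, and $\left(\tfrac{t}{2\pi}\right)^{-x}=\sum_{j\ge0}\tfrac{(-1)^j}{j!}L^j x^j$ with $L=\log\tfrac{t}{2\pi}$, together with the Taylor expansion of the prime product in \eqref{eq:Arith2ndMom} in powers of $\alpha,\beta,\delta$, whose derivatives at the origin are encoded by the symbols $A^{(i,j,k)}$. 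Expanding the integrand as a formal double power series in $\alpha,\beta$ -- a bookkeeping exercise I would carry out with a computer-algebra package -- one first checks that all polar contributions ($1/\alpha$, $1/\beta$, $1/(\alpha-\beta)$, and their products) cancel; this cancellation is precisely the $k=2$ instance of Lemma~\ref{LemmaProdSum}, in particular \eqref{eq:LemmaZero}, so the integrand is holomorphic near $\alpha=\beta=0$, as the symmetries of \eqref{eq:RatConW} force it to be.

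Finally, Taylor-expanding the left-hand side in $\alpha$ and $\beta$ shows that its $\alpha\beta$-coefficient is exactly $\sum_{0<\gamma\le T}\zeta'\!\left(\tfrac12+i\gamma\right)^2$, so equating $\alpha\beta$-coefficients produces the claimed integral. To bring the raw output into the compact form stated, I would apply the structural facts $A^{(0,0,0)}=1$ and $A^{(i,j,0)}=0$ for $i,j>0$ (immediate from \eqref{eq:Arith2ndMom}, since the product vanishes identically at $\delta=0$), together with the symmetry $A_{\{\alpha,\beta\}}(\delta)=A_{\{\beta,\alpha\}}(\delta)$ giving $A^{(i,j,k)}=A^{(j,i,k)}$, which lets symmetric pairs such as $A^{(0,1,0)},A^{(1,0,0)}$ be merged. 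The main obstacle is organisational rather than conceptual: the $\alpha\beta$-coefficient of $W_1+W_2$ is a sizeable sum of cross terms among the three expansions and the $A^{(i,j,k)}$, and one must track these carefully, then invoke the vanishing and symmetry rules, to collapse it to the four lines in the statement. Everything else is a direct consequence of Conjecture~\ref{RatioMom}.
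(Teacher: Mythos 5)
Your proposal is correct and follows essentially the same route as the paper: specialising \eqref{eq:RatConW} to $k=2$, rewriting the arithmetic factor via \eqref{eq:Arith2ndMom}, expanding everything with a computer-algebra package, extracting the $\alpha\beta$-coefficient, and collapsing the raw output using $A^{(0,0,0)}=1$, $A^{(i,j,0)}=0$ for $i,j>0$, and $A^{(i,j,k)}=A^{(j,i,k)}$. The only cosmetic difference is that the paper does not explicitly invoke Lemma~\ref{LemmaProdSum} for the pole cancellation in this worked example (that lemma is deployed for the leading-order analysis in Section~\ref{LeadingOrder}), relying instead on the expansion itself and the general holomorphy of the ratios output.
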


We present some numerical evidence. To generate these numerics, we again use a computer package. Most of the calculations are standard so we just describe how to calculate the arithmetic pieces and so the full polynomial that appears in the integral version of this moment.

We begin by taking logarithms of the arithmetic pieces $A_{\{\alpha, \beta\}}(\delta)$ and expand it as a series about $\alpha,\beta,\delta=0$, which we can do as they are all small by assumption. Note that since the highest derivative that we need to take is the third derivative for any of $\alpha,\beta,\delta$, we don't need to expand beyond the third powers. We then sum this expansion for the first $1000$ prime numbers (note that for higher accuracy we could have done this calculation for more primes but at a higher computational cost - this would've generated better numerics and would be of future interest).

In doing these calculations, we find that
\begin{equation*}
\sum_{0 < \gamma \leq T} \zeta ' \left(\frac{1}{2} + i \gamma \right)^2 = \frac{1}{2 \pi} \int_{1}^{T} \left( \frac{1}{6}L^3 - 0.03621 L^2 + 2.12487 L  -2.52789 \right) dt + O\left( T^{1/2 +\varepsilon} \right).
\end{equation*}

After integrating, we have
\begin{multline*}
\sum_{0 < \gamma \leq T} \zeta ' \left(\frac{1}{2} + i \gamma \right)^2 = \frac{1}{6} \frac{T}{2\pi} \left( \log \frac{T}{2\pi}  \right)^3 -0.52037 \frac{T}{2\pi} \left( \log \frac{T}{2\pi}  \right)^2 \\+ 2.95321 \frac{T}{2\pi} \left( \log \frac{T}{2\pi}  \right) -4.65238 \frac{T}{2 \pi} + O\left( T^{1/2 +\varepsilon} \right).
\end{multline*}

We are now in a place to compare our theoretical conjecture with the true numerics of the real part of $\sum_{0 < \gamma \leq T} \zeta ' \left(1/2 + i \gamma \right)^2$, where we ignore the imaginary part as it is small. Note that this mimic Shanks' Conjecture, and we claim that $\zeta'(\rho)^2$ is also real and positive on average. (Similar conjectures - up to the sign - can be made for all moments of mixed derivatives in this paper, which is clear by looking at the leading order asymptotic in Conjecture~\ref{MixMom}.)

We begin by plotting the real part of the cumulative total of the sum $\sum_{0 < \gamma \leq T} \zeta'\left(1/2+i\gamma\right)^2$ for the first 1,000,000 zeros, given in Figure \ref{fig:PlotZetaPrimeRowSquared}. Numerically we can show that the imaginary part is small and so don't plot this in the following figures (which agrees with our conjecture only containing real terms in the asymptotic).
\begin{figure}[hbt!]
\begin{center}
\includegraphics[height=6cm]{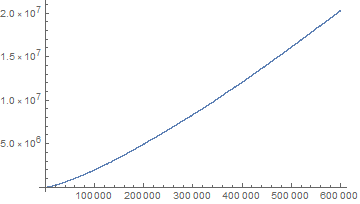}
\caption{The real part of the true value of $\sum_{0 < \gamma \leq T} \zeta' (1/2 + i \gamma)^2$, for $T$ up to the height of the 1,000,000\textsuperscript{th} zero.}\label{fig:PlotZetaPrimeRowSquared}
\end{center}
\end{figure}

In Figure \ref{fig:PlotZetaPrimeRowSquared_Subtract_L3} we have subtracted the main term, which shows a decrease in size of an order of magnitude, and also shows a sign change suggesting that we have the correct leading order behaviour.

\begin{figure}[hbt!]
\begin{center}
\includegraphics[height=6cm]{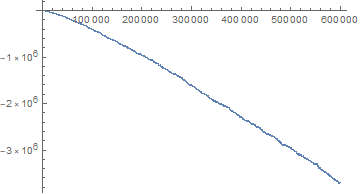}
\caption{The difference in the real part of the actual value of $\sum_{0 < \gamma \leq T} \zeta' (1/2 + i \gamma)^2$ and the leading asymptotic result of the equation, for $T$ up to the height of the 1,000,000\textsuperscript{th} zero.}\label{fig:PlotZetaPrimeRowSquared_Subtract_L3}
\end{center}
\end{figure}

Finally, in Figure \ref{fig:PlotZetaPrimeRowSquared_Subtract_L0} we plot the real part of the sum over zeros minus all the remaining terms in the asymptotic expansion, leaving the error term. The arithmetic pieces are calculated for the first $1000$ primes. Clearly there is excellent agreement, with a very small error compared with the original cumulative sum. 

\begin{figure}[hbt!]
\begin{center}
\includegraphics[height=6cm]{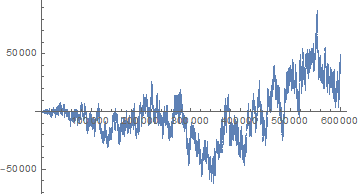}
\caption{The difference in the real part of the actual value of $\sum_{0 < \gamma \leq T} \zeta' (1/2 + i \gamma)^2$ and the whole asymptotic result of the equation, for $T$ up to the height of the 1,000,000\textsuperscript{th} zero, showing the real error at each point.}\label{fig:PlotZetaPrimeRowSquared_Subtract_L0}
\end{center}
\end{figure}

\begin{remark}
    As we have already remarked in this paper, the error term here could plausibly be of size $O\left(T^{1 - \delta}\right)$ for $0 < \delta < 1/2$, rather than of size $O\left(T^{1/2 +\varepsilon}\right)$. While we do not rule out the possibility of the first option, as discussed earlier, we have settled on this second option, in keeping with the original statements of the recipe/Ratios Conjecture.
\end{remark}

\section*{Acknowledgements}
We would like to thank Winston Heap and Junxian Li for their helpful comments. The majority of this paper constitutes the last chapter of the second author's PhD thesis \cite{theAPC}. The second author is now supported by the Heilbronn Institute for Mathematical Research where further edits to this papers has occured.

\bibliographystyle{abbrv}

\bibliography{bibliography}

\end{document}